\numberwithin{equation}{section}
\theoremstyle{plain}
\newtheorem{Theorem}{Theorem}
\newtheorem{Corollary}[Theorem]{Corollary}
\newtheorem{Proposition}[Theorem]{Proposition}
\newtheorem{Lemma}[Theorem]{Lemma}
\theoremstyle{definition}
\newcommand{\abs}[1]{{\left|{#1}\right|}}
\newcommand{\Cat}{\ensuremath{\textnormal{Cat}}\xspace}
\newcommand{\Icon}{\ensuremath{\textnormal{Icon}}\xspace}
\newcommand{\Ps}{\ensuremath{\textnormal{Ps}}\xspace}
\newcommand{\Set}{\ensuremath{\textnormal{Set}}\xspace}
\newcommand{\f}[1]{\ensuremath{\mathcal{#1}}\xspace}
\newcommand{\TAlg}{\ensuremath{\textnormal{T-Alg}}\xspace}
\newcommand{\TAlgs}{\ensuremath{\textnormal{T-Alg}_{\textnormal{s}}}\xspace}
\newcommand{\thg}{{\mathord{\text{--}}}}
\newcommand{\qun}{q}
\newcommand{\ToAlg}{\ensuremath{\text{T}_0\textnormal{-Alg}}\xspace}
\newcommand{\TpAlg}{\ensuremath{\text{T}_d\textnormal{-Alg}}\xspace}
\newcommand{\ZKoAlg}{\ensuremath{\text{ZK}_0\textnormal{-Alg}}\xspace}
\newcommand{\ZKpAlg}{\ensuremath{\text{ZK}_d\textnormal{-Alg}}\xspace}
\newcommand{\CatGph}{\ensuremath{\textnormal{Cat-Gph}}\xspace}
\newcommand{\End}{\ensuremath{\textnormal{End}}\xspace}
\newcommand{\Mnd}{\ensuremath{\textnormal{Mnd}}\xspace}
\newcommand{\Mndf}{\ensuremath{\textnormal{Mnd}_\mathrm{f}}\xspace}
\newcommand{\Endsf}{\ensuremath{\textnormal{End}_\mathrm{sf}}\xspace}
\newcommand{\Mndsf}{\ensuremath{\textnormal{Mnd}_\mathrm{sf}}\xspace}
\def\matrixobject@{%
 \edef \next@{={\DirectionfromtheDirection@ }}%
 \expandafter \toks@ \next@ \plainxy@
 \let\xy@@ix@=\xyq@@toksix@
 \xyFN@ \OBJECT@}
\let\xy@entry@@norm=\entry@@norm
\def\entry@@norm@patched{%
 \let\object@=\matrixobject@
 \xy@entry@@norm }
\newcommand{\twocong}[2][0.5]{\ar@{}[#2] \save ?(#1)*{\cong}\restore}
\newcommand{\twoeq}[2][0.5]{\ar@{}[#2] \save ?(#1)*{=}\restore}
\newcommand{\rtwocell}[3][0.5]{\ar@{}[#2] \ar@{=>}?(#1)+/l 0.2cm/;?(#1)+/r 0.2cm/^{#3}}
\newcommand{\ltwocell}[3][0.5]{\ar@{}[#2] \ar@{=>}?(#1)+/r 0.2cm/;?(#1)+/l 0.2cm/^{#3}}
\newcommand{\ltwocello}[3][0.5]{\ar@{}[#2] \ar@{=>}?(#1)+/r 0.2cm/;?(#1)+/l 0.2cm/_{#3}}
\newcommand{\dtwocell}[3][0.5]{\ar@{}[#2] \ar@{=>}?(#1)+/u  0.2cm/;?(#1)+/d 0.2cm/^{#3}}
\newcommand{\dltwocell}[3][0.5]{\ar@{}[#2] \ar@{=>}?(#1)+/ur  0.2cm/;?(#1)+/dl 0.2cm/^{#3}}
\newcommand{\drtwocell}[3][0.5]{\ar@{}[#2] \ar@{=>}?(#1)+/ul  0.2cm/;?(#1)+/dr 0.2cm/^{#3}}
\newcommand{\dthreecell}[3][0.5]{\ar@{}[#2] \ar@3{->}?(#1)+/u  0.2cm/;?(#1)+/d 0.2cm/^{#3}}
\newcommand{\utwocell}[3][0.5]{\ar@{}[#2] \ar@{=>}?(#1)+/d 0.2cm/;?(#1)+/u 0.2cm/_{#3}}
\newcommand{\dtwocelltarg}[3][0.5]{\ar@{}#2 \ar@{=>}?(#1)+/u  0.2cm/;?(#1)+/d 0.2cm/^{#3}}
\newcommand{\utwocelltarg}[3][0.5]{\ar@{}#2 \ar@{=>}?(#1)+/d  0.2cm/;?(#1)+/u 0.2cm/_{#3}}
\newcommand{\cd}[2][]{\vcenter{\hbox{\xymatrix#1{#2}}}}
\newcommand{\op}{\mathrm{op}}
\newcommand{\us}{U}
\newcommand{\fs}{F}
\newcommand{\uu}{U}
\newcommand{\uo}{U_0}
\newcommand{\fo}{F_0}
\newcommand{\up}{U_d}
\newcommand{\fp}{F_d}
\newcommand{\To}{T_0}
\newcommand{\Tp}{T_d}
\newcommand{\Co}{{\f C}_0}
\newcommand{\Cp}{{\f C}_d}
\newcommand{\proj}{discrete}
\newcommand{\projs}{discretes}
\title{On semiflexible, flexible and pie algebras}
\begin{document}
\author{John Bourke}
\address{Department of Mathematics and Statistics, Masaryk University, Kotl\'a\v rsk\'a 2, Brno 60000, Czech Republic}
\email{bourkej@math.muni.cz} 
\subjclass[2000]{Primary: 18D05, 18C15}
\author{Richard Garner}
\address{Department of Computing, Macquarie University, NSW 2109, Australia}
\email{richard.garner@mq.edu.au}

\date{\today}

\thanks{The first author acknowledges the support of the Eduard \v Cech Center for Algebra and
Geometry, grant number LC505.
The second author acknowledges the support of an Australian Research Council Discovery Project, grant number DP110102360.}

\begin{abstract}
We introduce the notion of \emph{pie algebra} for a $2$-monad, these bearing the same relationship to the flexible and semiflexible algebras as pie limits do to flexible and semiflexible ones. We see that in many cases, the pie algebras are precisely those ``free at the level of objects'' in a suitable sense; so that, for instance, a strict monoidal category is pie just when its underlying monoid of objects is free. Pie algebras are contrasted with flexible and semiflexible algebras via a series of characterisations of each class; particular attention is paid to the case of pie, flexible and semiflexible weights, these being characterised in terms of the behaviour of the corresponding weighted limit functors.
\end{abstract}
 \leftmargini=2em

\def\xypic{\hbox{\rm\Xy-pic}}
\maketitle
\section{Introduction}
One category-theoretic approach to universal algebra is based on the theory of monads. Single-sorted (possibly infinitary) algebraic theories correspond with monads on the category of sets, and models of a theory with algebras for the associated monad; this justifies our regarding monads on other categories as generalised algebraic theories, and many basic aspects of classical universal algebra may be reconstructed in this broader context. A further generalisation is obtained on passing from the study of monads on categories to that of $2$-monads on $2$-categories, which yields a kind of ``two-dimensional universal algebra''. The simplest case studies $2$-monads on $\Cat$, which encode many familiar structures that may be borne by a category: thus there are $2$-monads whose algebras are monoidal categories, or categories with finite products, or distributive categories, or cocomplete categories, and so on.

In the passage from $1$- to $2$-dimensional monad theory, a number of new phenomena come into being. As well as \emph{strict} algebras for a $2$-monad, we also have \emph{pseudo} and \emph{lax} ones, for which the algebra axioms have been weakened to hold only up to coherent $2$-cells, invertible in the former case but not in the latter. Likewise, between the algebras for a $2$-monad we have not only the \emph{strict} morphisms, but also \emph{pseudo} and \emph{lax} ones, which preserve the algebra structure in correspondingly weakened manners. 
The interplay between strict, pseudo and lax in $2$-dimensional monad theory provides an abstract setting for the study of coherence problems of the kind exemplified by Mac Lane's famous result \cite{Mac-Lane1963Natural} that \emph{every monoidal category is monoidally equivalent to a strict one}. The study of coherence from the standpoint of $2$-monad theory was championed by Max Kelly, who initiated the programme in~\cite{Kelly1974Coherence, Kelly1974Doctrinal} and with his collaborators, brought it to a particularly fine expression in~\cite{Blackwell1989Two-dimensional}. 

A subtle and crucial  notion in Kelly's framework is that of \emph{flexibility}; this first arose in~\cite{Kelly1974Doctrinal}, where was introduced the notion of a \emph{flexible $2$-monad}. One of the most important properties of flexible $2$-monads, as described in~\cite[Theorem 3.3]{Kelly2004Monoidal}, is that strict algebra structure may be transported along equivalences: which is to say that if $A \simeq B$, and $A$ bears strict algebra structure for a flexible $2$-monad, then so does $B$. This is the case, for example, with the $2$-monad on $\Cat$ whose algebras are monoidal categories, but not for the $2$-monad whose algebras are \emph{strict} monoidal categories; and indeed, the former $2$-monad is flexible, whilst the latter is not. 
This particular good behaviour of flexible $2$-monads is a consequence of a more fundamental one: that \emph{every pseudoalgebra for a flexible $2$-monad is isomorphic to a strict one}. Intuitively, we think that flexibility of a $2$-monad expresses a certain ``looseness'' in the structure it imposes on its algebras, and this intuition has been expressed for $2$-monads on $\Cat$ in the following way: that such a $2$-monad is flexible if its algebras can be presented as categories equipped with basic operations and with natural transformations between derived operations, satisfying equations between derived natural transformations but with no equations being imposed between derived operations themselves.

The scope of the concept of flexibility was vastly expanded in~\cite{Blackwell1989Two-dimensional}, where was introduced the notion of a \emph{flexible algebra} for a $2$-monad with rank on a complete and cocomplete $2$-category. With suitable cardinality constraints, $2$-monads may themselves be viewed as algebras for such a $2$-monad, so that flexible $2$-monads are an instance of the more general notion. The flexible algebras exhibit the same kind of ``looseness'' as we saw above, this now being manifested in the result that \emph{each pseudomorphism out of a flexible algebra is isomorphic to a strict one}; which is in fact the same property that allows pseudoalgebras to be replaced by isomorphic strict ones for a flexible $2$-monad. We have the intuitive picture that an algebra for a $2$-monad on $\Cat$ is flexible if it admits a presentation in which no equalities are forced at the level of objects: so that, for example, a monoidal category is flexible if it may be obtained from a free one through the addition of new morphisms and new equations between morphisms, but without any new equations being added between objects.

The notion of flexibility is natural from a $2$-categorical perspective, but also from a homotopy-theoretic one; Lack showed in~\cite{Lack2007Homotopy-theoretic} that, for a $2$-monad $T$ with rank on a locally presentable $2$-category $\f C$, the $2$-category of strict $T$-algebras and strict algebra morphisms bears a Quillen model structure whose cofibrant objects are precisely the flexible algebras. Many results concerning flexibility can be understood from this homotopy-theoretic perspective, describing as they do certain good properties of cofibrant objects. 

We have above discussed the notion of flexibility in pragmatic terms, showing that it is strong enough to imply certain desirable properties, and broad enough to admit all the examples falling under our intuitive picture of flexibility. Yet in doing so we have glossed over a gap in our understanding. On the one hand, it was shown in~\cite[Theorem 4.7]{Blackwell1989Two-dimensional} that the ability to replace pseudomorphisms by strict ones in the manner described above characterises not the flexible algebras, but the larger class of \emph{semiflexible algebras}; these being the closure of the flexible algebras amongst all algebras under equivalences. On the other, the algebras answering to our intuitive picture of flexibility, as being those which are ``free at the level of objects'', need not comprise the totality of the flexible algebras, since the flexibles are closed under retracts, whilst those those which are free in the manner just mentioned may not be.
We thus have three classes of algebras, each containing the next: the semiflexibles, the flexibles, and a third class, as yet unnamed, comprising those algebras ``free at the level of objects''; and whilst there is a substantial body of results concerning the flexibles, we do not have a good grasp of what is lost from these results on moving to the larger class of semiflexibles, or of what is gained on passing to the smaller third class in which  many examples of flexible algebras reside. One of the two main objectives of this paper is to rectify this, by proving a number of theorems which completely characterise the algebras in the three classes. Each of these theorems considers a particular kind of good behaviour that an algebra may have, and gives three increasingly strong forms of that behaviour which are respectively equivalent to the algebra's lying in the first, second or third of our classes. We will also describe closure properties by which we can recognise that an algebra constructed in a particular way must lie in one of these classes, and consider the extent to which these closure properties, in turn, completely characterise the given classes.

Yet before we can do any of these things, we need to give a precise definition for the last of our three classes. Clearly, to speak of an algebra for a $2$-monad's being ``free at the level of objects'' is vague; and even if made precise, it would be insufficiently general, limiting us to $2$-monads on $\Cat$ and similarly well-behaved $2$-categories. The second main objective of this paper, then, is to give a fully general definition of this class, and to analyse its scope for a range of $2$-monads of interest. The algebras in this class will be the \emph{pie algebras} of our title; and we will see that in many cases, they capture precisely our intuitive notion of an algebra ``free at the level of objects''.

The motivation for the name pie algebra comes from the theory of $2$-categorical limits. The relevant limits in this context are the \emph{weighted limits} of~\cite[Chapter~3]{Kelly1982Basic}, in specifying which one gives not only a diagram $D \colon \f J \to \f C$ over which a universal cone is to be constructed, but also a \emph{weight} $W \in [\f J, \Cat]$ specifying the nature of the cones amongst which the universal one is to be sought. For a fixed $\f J$, the $2$-category of weights $[\f J, \Cat]$ can be viewed as the $2$-category of algebras for a $2$-monad on $[\mathrm{ob}\ \f J, \Cat]$, so allowing us to speak of flexible and semiflexible weights---those which are flexible or semiflexible as algebras---and weighted by these, semiflexible and flexible limits. These last were studied in~\cite{Bird1989Flexible}, where it was shown that, amongst other things, the flexible limits are precisely those constructible from products, inserters, equifiers and splittings of idempotents; see~\cite{Kelly1989Elementary} for the definitions of these limit-types.

\emph{Pie limits} are, by definition, those constructible from products, inserters and equifiers alone. Many interesting 2-categorical limits are pie---for example, comma objects, inverters, descent objects, Eilenberg-Moore objects of monads, and pseudo, lax and oplax limits---and experience shows that  pie limits are characterised by the property that \emph{limiting cones force no equations between $1$-cells}. When this property of pie limits is re-expressed as a property of the defining weights, it becomes the statement that the pie weights are those which are ``free at the level of objects'', in a sense which was made precise in~\cite[Corollary 3.3]{Power1991A-characterization}. In other words, when we view weights as algebras for a $2$-monad, those answering to the intuitive description of our third class  are precisely the pie weights; so motivating our naming this third class of algebras, the pie algebras. Of course, for this to be a consistent notation, we must ensure that the pie algebras for the weight $2$-monad are precisely the pie weights. This was in fact done in~\cite{Lack2011Enhanced}; we give an alternative proof in Section~\ref{sec:exs} below.

Let us now give a more detailed account of the content of this paper. We begin in Section~\ref{sec:pie} by defining the notion of pie algebra for a $2$-monad $T$ with rank on a complete and cocomplete $2$-category $\f C$. For such a $T$, it was shown in~\cite{Blackwell1989Two-dimensional} that the inclusion of $\TAlgs$, the $2$-category of strict $T$-algebras and strict algebra morphisms, into $\TAlg$, the $2$-category of strict $T$-algebras and algebra pseudomorphisms, admits a left adjoint $Q \colon \TAlg \to \TAlgs$; and it is in terms of this $Q$ that the authors of ~\cite{Blackwell1989Two-dimensional} defined their notions of flexibility and semiflexibility. We recast these definitions in terms of the $2$-comonad on $\TAlgs$ induced by $Q$ and its right adjoint. We will see that an algebra is semiflexible just when it is a pseudocoalgebra for this $2$-comonad, and flexible just when it is a normalised pseudocoalgebra: and this leads us to define an algebra to be \emph{pie} when it admits strict coalgebra structure. The remainder of Section~\ref{sec:pie} is devoted to further analysis of the notion of pie algebra, leading to our first main theorem, which provides conditions under which our intuitive picture of the pie algebras, as those which are ``free at the level of objects'', is confirmed.

We make use of this theorem in Section~\ref{sec:exs}, where we describe the pie algebras for a range of $2$-monads of interest. In particular, we see that a weight $W \in [\f J, \Cat]$ is a pie algebra for the weight $2$-monad on $[\mathrm{ob}\ \f J, \Cat]$ just when it is a pie weight, so confirming the consistency of our terminology with its motivating case.  Further examples show that a monoidal category is pie just when its monoidal structure is free at the level of objects,  that a $2$-category is pie just when its underlying category is free on a graph, and so on. 

In Section~\ref{sec:char}, we turn to the second of our main objectives, that of clarifying the relationship between the semiflexible, flexible and pie algebras for a $2$-monad. As explained above, we do so through a number of theorems that characterise the algebras in these three classes in terms of certain good behaviours they may possess. In fact, we shall give special consideration to the case of weights, providing alternate forms of our results which characterise the semiflexible, flexible and pie weights in terms of the behaviour of the corresponding weighted limit functor. Let us now give a brief overview of the theorems we will prove.

The first of these builds on the characterisation of the semiflexible algebras described above: an algebra is semiflexible just when every pseudomorphism out of it may be replaced by an isomorphic strict one. The corresponding results for flexible and pie algebras require that this replacement should be done in increasingly well-behaved ways; whilst the corresponding results for semiflexible, flexible and pie weights concern the manner in which weighted pseudocones may be replaced by strict ones. As an application of this result, we show that the semiflexible weights are precisely those whose corresponding limits are also bilimits.

Our second characterisation result takes its most intuitive form for the pie weights: we show that a weight $W \in [\f J, \Cat]$ is pie just when the limit $2$-functor $\{W, \thg\} \colon [\f J, \f K] \to \f K$ admits an extension to a $2$-functor $\Ps(\f J, \f K) \to \f K$ for every complete $2$-category $\f K$; here $\Ps(\f J, \f K)$ denotes the $2$-category of $2$-functors, pseudonatural transformations and modifications. The corresponding results for flexible and semiflexible weights ask for correspondingly weaker extensions of $\{W, \thg\}$ to $\Ps(\f J, \f K)$; whilst those concerning an algebra $A$ for a general $2$-monad deal with extensions of the hom $2$-functor $\TAlgs(A, \thg) \colon \TAlgs \to \Cat$ to $\TAlg$.

Our third result also characterises each class of algebras in terms of the behaviour of the hom $2$-functor $\TAlgs(A, \thg)$; this time, with respect to certain kinds of weak equivalence in $\TAlgs$. The flexible case says that the algebras in this class are precisely the cofibrant objects of the model structure on $\TAlgs$---as was shown in~\cite[Theorem 4.12]{Lack2007Homotopy-theoretic}---whilst in the pie case, it becomes the statement that these algebras are the \emph{algebraically cofibrant} objects in the sense of~\cite{Riehl2011Algebraic}. The corresponding results for weights concern the behaviour of the limit $2$-functor $\{W, \thg\} \colon [\f J, \f K] \to \f K$ with respect to certain kinds of pointwise diagram equivalence.

In Section~\ref{sec:closure}, we study closure properties of the three classes of algebras. We prove that the semiflexibles, flexibles and pie algebras each contain the frees, and are each closed under the corresponding class of weighted colimits; this gives sufficient conditions by which to check that an algebra lies in one of these classes. In the flexible case it is known that this sufficient condition is also necessary---every flexible algebra is a flexible colimit of frees---and it is therefore natural to ask if the same is true for the semiflexible or pie algebras. For the semiflexibles, we do not completely resolve the question; but for the pie algebras, we give a much more complete answer. We will see that in general, \emph{not} every pie algebra need lie in the closure of the frees under pie colimits, but that this will be the case under certain additional assumptions, satisfied in many of our examples.

We conclude the paper in Section~\ref{sec:sf2monad} with an extended application of the results developed in it. We use it to provide a necessary and sufficient characterisation of the $2$-monads on $\Cat$ which admit a presentation of the form described above; that is, one by operations and natural transformations in which no equations are imposed between derived operations, only between derived natural transformations. We do not, in fact, consider the most general kind of $2$-monad on $\Cat$, but restrict attention to the \emph{strongly finitary} $2$-monads of~\cite{Kelly1993Finite-product-preserving}: those whose generating operations are all of the form $\f C^n \to \f C$ for some natural number $n$. The $2$-category of such $2$-monads is $2$-monadic over the product $2$-category $\Cat^\mathbb N$, and we show that  with respect to the induced $2$-monad on $\Cat^\mathbb N$, the pie algebras are precisely the strongly finitary $2$-monads which admit a presentation of the good form described above.

\section{Pie algebras for a $2$-monad}\label{sec:pie}
\looseness=-1 In this section, we introduce the notion of \emph{pie algebra} for a $2$-monad, and give an analysis showing that in many cases, the pie algebras are precisely the algebras ``free at the level of objects'', in a sense we shall make precise. We start this section, however, by recalling some necessary $2$-categorical preliminaries.

A map $f \colon X \to Y$ of a $2$-category $\f C$ is said to be \emph{fully faithful} just when for each $A \in \f C$, the functor $\f C(A, f) \colon \f C(A, X) \to \f C(A, Y)$ is fully faithful; which is to say that every $2$-cell $\alpha \colon fh \Rightarrow fk \colon A \to Y$ in $\f C$ is of the form $f \beta$ for a unique $\beta \colon h \Rightarrow k \colon A \to X$. It is easy to see that any right adjoint $2$-functor preserves fully faithfuls; and if $\f C$ is finitely complete, then we can say a little more. For then $2$-cells $\alpha$ and $\beta$ as above correspond to generalised elements of the comma object $f \downarrow f$ and the cotensor product $X^\mathbf 2$, respectively, so that $f$ is fully faithful just when the canonical arrow $X^\mathbf 2 \to f \downarrow f$ is an isomorphism. We conclude that for finitely complete $\f C$, any finite-limit-preserving $2$-functor $ \f C \to \f D$ will preserve fully faithfuls, and will moreover reflect them if it is conservative.

A map $f \colon X \to Y$ in a $2$-category $\f C$ is called an \emph{equivalence} if there exists a map $g \colon Y \to X$ and invertible $2$-cells $g \circ f \cong 1$ and $f\circ g \cong 1$. It may be that $g$ can be chosen such that $f \circ g = 1$, or such that $g \circ f = 1$; in which case we call $f$ a \emph{surjective} or an \emph{injective} equivalence, respectively. It is easy to see that $f$ is an equivalence just when it is fully faithful and admits a \emph{pseudo-section}: a map $g \colon Y \to X$ with $f \circ g \cong 1$; and similarly, that $f$ is a surjective equivalence just when it is fully faithful and admits a genuine section.

Throughout the paper, $T$ will denote a $2$-monad with rank on a complete and cocomplete $2$-category $\f C$; recall that a $2$-monad has \emph{rank} if its functor part preserves $\lambda$-filtered colimits for some regular cardinal $\lambda$. These hypotheses on $\f C$ and on $T$ will \emph{always} be assumed to be in effect, though we will repeat them for emphasis from time to time throughout the paper. Following~\cite{Blackwell1989Two-dimensional}, we write $\TAlgs$ and $\TAlg$ for the $2$-categories whose objects are, in both cases, the strict $T$-algebras, and whose morphisms are, respectively, the strict $T$-algebra morphisms, and the $T$-algebra pseudomorphisms; recall that a \emph{pseudomorphism} of $T$-algebras $(A, a) \rightsquigarrow (B, b)$ is given by a morphism $f \colon A \to B$ between the underlying
objects together with an invertible $2$-cell $\bar f \colon b \circ Tf \cong f \circ a$
satisfying two coherence axioms. We write $\fs \colon \f C \leftrightarrows \TAlgs \colon \us$ for the free-forgetful adjunction induced by the $2$-monad $T$, and will abuse notation by writing $\uu$ also for the extension of the forgetful functor to the category of pseudomorphisms $\TAlg$. Recall also the notion of \emph{pseudoalgebra} for
$T$; this being given by an object $A$ of \f C equipped with an action $a
\colon TA \to A$ which verifies compatibility with the unit and multiplication
of $T$ only up to given coherent isomorphisms. Though we shall not make direct
use of the notion of pseudoalgebra here, we shall have cause to consider the
dual notion of \emph{pseudocoalgebra} for a $2$-comonad.

We now recall from~\cite{Blackwell1989Two-dimensional} the notions of flexible and semiflexible $T$-algebra. Under our standing hypotheses on $\f C$ and on $T$, the central result of~\cite{Blackwell1989Two-dimensional}---contained in its Theorem~3.13 and Remark~3.14---assures us that the evident inclusion $2$-functor $\iota \colon \TAlgs \to \TAlg$ has a left $2$-adjoint $Q \colon \TAlg \to \TAlgs$. At an algebra $A$, the unit and
counit of the adjunction $Q \dashv \iota$ are respectively pseudo and strict morphisms of
algebras $\qun_{A} \colon A \rightsquigarrow QA$ and $p_{A} \colon QA \to A$ satisfying
the triangle equations:
\begin{equation*}
\cd[@C+1em]{
A \ar@{~>}[r]^{q_A} \ar[dr]_1 & QA \ar[d]^{p_A} \\
& A}
\qquad \text{and} \qquad
\cd[@C+1em]{
QA \ar[r]^{Qq_A} \ar[dr]_1 & Q^{2}A \ar[d]^{p_{QA}} \\
& A}
\end{equation*}
where we, as is standard, omit to write the action of the identity on objects
inclusion $\iota \colon \TAlgs \to \TAlg$. The first triangle equation asserts
that the unit $\qun_{A}$ is a section of the counit in \TAlg; in fact, as was shown in~\cite[Theorem 4.2]{Blackwell1989Two-dimensional}, $p_{A} \colon QA \to A$ is a surjective
equivalence in \TAlg with equivalence inverse $\qun_{A}$. In particular, $p_A$
is fully faithful in $\TAlg$: but since $\iota$ is locally fully faithful,
it follows that $p_A$ is also fully faithful in $\TAlgs$.

It sometimes happens that $p_{A} \colon QA \to A$ admits a strict section, a
section in \TAlgs, and in this case, the algebra $A$ is said to be
\emph{flexible}. Since $p_A$ is fully faithful, the section $a \colon A \to QA$
is an equivalence inverse of $p_{A}$, which therefore becomes a
surjective equivalence in \TAlgs: so the flexible algebras are equally those $A$
for which $p_{A}$ is a surjective equivalence in \TAlgs. This is~\cite[Theorem~4.4]{Blackwell1989Two-dimensional}, which furthermore characterises $A$ as being flexible just
when it is a retract of $QB$ for some $B$. On the other hand, it may be that $p_{A}$ does not admit a section in \TAlgs but does
admit a pseudo-section: and in this situation the algebra $A$ is said to be
\emph{semiflexible}. Since $p_A$ is fully faithful, such a
pseudo-section is an equivalence inverse for $p_A$, so that $A$
is semiflexible if and only if $p_{A} \colon QA \to A$ is an equivalence in
\TAlgs. Another characterisation, given in~\cite[Theorem~4.7]{Blackwell1989Two-dimensional}, is that
$A$ is semiflexible just when it is equivalent in \TAlgs to some $QB$.

In order to motivate the definition of pie algebra, we now provide another description of the flexible and semiflexible algebras: one given in terms of coalgebra structure for the $2$-comonad  that is induced on $\TAlgs$ by the adjunction $Q \colon \TAlg \leftrightarrows \TAlgs \colon \iota$. This comonad should rightly be called $Q \iota$, but omitting to write the $\iota$ as above, we will refer to it simply as $Q$.

\begin{Proposition}
A $T$-algebra $A$ is semiflexible just when it admits pseudo-$Q$-coalgebra structure,
and flexible just when it admits normalised pseudocoalgebra structure.
\end{Proposition}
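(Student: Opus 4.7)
The plan is to leverage the fact, recalled just before the statement, that each component $p_A$ of the counit of the $2$-comonad $(Q, p, \delta)$ on $\TAlgs$ --- with comultiplication $\delta_A := Qq_A$ --- is fully faithful in $\TAlgs$. Unpacking the definition, a pseudo-$Q$-coalgebra structure on a strict algebra $A$ consists of a strict morphism $a \colon A \to QA$ together with invertible $2$-cells $\alpha \colon p_A \, a \cong 1_A$ and $\beta \colon \delta_A \, a \cong Qa \cdot a$ satisfying the two counit and one coassociativity axioms; such a structure is \emph{normalised} exactly when $\alpha$ is the identity.

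For the easy directions, any pseudocoalgebra structure $(a, \alpha, \beta)$ provides, via $\alpha$, a pseudo-section of the fully faithful morphism $p_A$, which then must be an equivalence in $\TAlgs$: so $A$ is semiflexible. If the structure is normalised, then $a$ is a strict section of $p_A$, and $A$ is flexible by definition.

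For the converses, suppose first that $A$ is semiflexible, so that $p_A$ is an equivalence in $\TAlgs$; choose a pseudo-inverse $a \colon A \to QA$ and invertible $2$-cell $\alpha \colon p_A a \cong 1_A$, which by a standard $2$-categorical argument we may promote to one half of an adjoint equivalence. To construct $\beta$, consider whiskering with $p_{QA}$: the comonad axiom $p_{QA} \delta_A = 1_{QA}$ yields $p_{QA} \delta_A a = a$, while $2$-naturality of $p$ yields $p_{QA} Qa \cdot a = a p_A a$; so the invertible $2$-cell $a\alpha \colon ap_A a \cong a$ pulls back uniquely along the fully faithful $p_{QA}$ to an invertible $2$-cell whose inverse is the required $\beta$. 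For the flexible case we perform the same construction starting from a strict section $a$ of $p_A$ and taking $\alpha = \mathrm{id}$, which produces a pseudocoalgebra structure that is normalised by construction.

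The key obstacle is verifying the pseudocoalgebra axioms for the triple $(a, \alpha, \beta)$ so produced; these are equations between $2$-cells whose codomain is an iterate $Q^n A$ with $n \leq 2$. I expect to handle them uniformly by iterated whiskering with components of $p$, which is legitimate because $p$ is $2$-natural with fully faithful components and so is jointly injective on $2$-cells at every level. After such whiskering, each axiom should collapse via the comonad axioms for $(Q, p, \delta)$ together with the chosen triangle identity for $\alpha$ to a tautology, and invertibility of the witnesses is preserved by fully faithful maps.
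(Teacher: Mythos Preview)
Your proposal is correct and follows a genuinely different route from the paper. The paper does not verify the pseudocoalgebra axioms by hand; instead, having promoted $(a,\alpha_0)$ to an adjoint equivalence with $p_A$, it invokes Theorem~6.1 of~\cite{Kelly2004Monoidal} to \emph{transport} the cofree strict $Q$-coalgebra structure $(QA,\Delta_A)$ along this adjoint equivalence, and then remarks that a direct calculation shows the resulting coaction and counit constraint to be $a$ and $\alpha_0$. The flexible case follows by noting that if $\alpha_0$ is an identity, the transported structure is normalised. Your approach, by contrast, constructs $\beta$ explicitly as the unique lift of $(a\alpha)^{-1}$ through the fully faithful $p_{QA}$ and then checks the coherence axioms directly.

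What each approach buys: the paper's argument is short and conceptual but relies on an external black box, and one still has to unwind the transport to see that the coaction is $a$ itself rather than something like $Qp_A \circ \Delta_A \circ a$. Your argument is self-contained and makes transparent \emph{why} the structure exists: one of the two counit axioms holds by construction of $\beta$, the other reduces after whiskering with $p_A$ to the equality $(p_Aa)\alpha = \alpha(p_Aa)$, which is forced by interchange since $\alpha$ is invertible, and the coassociativity pentagon similarly collapses after whiskering with $p_{Q^2A}$ and $p_{QA}$. It is worth noting that the adjoint-equivalence upgrade you mention is not actually needed for these verifications; invertibility of $\alpha$ together with interchange suffices. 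You might also observe that your $\beta$ is in fact \emph{forced} by the first counit axiom, so any pseudocoalgebra structure with the given $(a,\alpha)$ must coincide with yours---which gives an alternative way to conclude, once one knows from the transport theorem that \emph{some} such structure exists.
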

\begin{proof}
Suppose first that $A$ is semiflexible; so we
have $a \colon A \to QA$ and an isomorphism $\alpha_{0} \colon p_{A} \circ a
\cong 1_{A}$. Since $p_A$ is fully faithful, there is a unique $2$-cell $\beta \colon a \circ p_A \cong 1_{QA}$ with $p_A \circ \beta = \alpha_0^{-1} \circ p_A$ and now
$(a,\alpha_{0}) \colon A \leftrightarrows QA \colon (p_{A},\beta)$ is an adjoint equivalence in $\TAlgs$. The $T$-algebra $QA$
underlies the cofree strict $Q$-coalgebra $(QA,\Delta_{A})$; and so by Theorem~6.1
of~\cite{Kelly2004Monoidal} we may transport this coalgebra structure along the adjoint equivalence with $A$ to obtain the required pseudo-$Q$-coalgebra structure on $A$. Direct calculation shows the induced pseudocoalgebra structure  to have coaction map $a \colon A \to QA$ and counitality constraint $\alpha_0 \colon p_A \circ a \to 1_{A}$; now if $A$ is flexible, then $\alpha_0$ may be chosen to be the identity, so that the induced pseudocoalgebra structure is normalised. This proves one direction of the proposition. For the other, if $A$ admits pseudocoalgebra structure, then it is equivalent to $QA$, and hence semiflexible; whilst if the pseudocoalgebra structure is normalised, then $A$ is a retract of $QA$ and hence flexible.
\end{proof}
With this result in mind, it is natural to define a \emph{pie algebra} to be a $T$-algebra which admits strict $Q$-coalgebra structure. As discussed in the introduction, the name is motivated by the situation where algebras are \emph{weights} $W \in [\f J, \Cat]$, for which we have the notion of pie weight, a weight which defines a pie limit. In order for the naming to be consistent, we must verify that the weights which are pie as algebras are precisely the pie weights. This is, in fact, Theorem~6.12 of~\cite{Lack2011Enhanced}; we give an alternative proof, not relying on the theory of~\cite{Lack2011Enhanced}, as Proposition~\ref{prop:pieweights} below.

We now wish to give a concrete description of the pie algebras for a range of $2$-monads of interest. Rather than doing so in an ad hoc manner, we will prove a general result which, under additional hypotheses on $T$ and on $\f C$, gives a precise and practical characterisation of the pie $T$-algebras; this result will then be employed in giving our examples.
In order to state the extra hypotheses that our result requires, we need the following notion. A morphism $f \colon X \to Y$ of a $2$-category $\f C$ is called \emph{objective} if for every fully faithful $g \colon W \to Z$ in $\f C$, the square
\begin{equation*}
\cd[@C+1em@R-0.5em]{
   \f C(Y, W) \ar[r]^{\f C(Y, g)} \ar[d]_{\f C(f, W)} &
   \f C(Y, Z) \ar[d]^{\f C(f, Z)} \\
   \f C(X, W) \ar[r]_{\f C(X, g)} &
   \f C(X, Z)
   }
\end{equation*}
is a pullback in $\Cat$. Being a pullback at the level of objects says that $g$ is orthogonal to all fully faithful morphisms in $\f C$; at the level of morphisms, it asserts a two-dimensional aspect to this orthogonality. One consequence of the definition is that any left adjoint $2$-functor preserves objectives, since any right adjoint $2$-functor preserves fully faithfuls; another is that the objectives are closed in the arrow $2$-category $\f C^\mathbf 2$ under $2$-dimensional colimits, in particular under retracts.

Due to their orthogonality, objective and fully faithful morphisms will form a factorisation system on $\f C$ so long as every morphism does in fact decompose as an objective followed by a fully faithful. This is the case, for example, when $\f C = [\f J, \Cat]$, with the objectives and  fully faithfuls being the maps which are respectively pointwise bijective on objects and pointwise fully faithful. For our characterisation result, we will suppose that $\f C$ is a $2$-category admitting (objective, fully faithful) factorisations, and that they lift to $\TAlgs$. By this we mean that $\TAlgs$ should itself admit such factorisations, with a map $f$ of $\TAlgs$ being objective or fully faithful precisely when $Uf$ is correspondingly so in $\f C$. The following result gives some alternative characterisations.
\begin{Proposition}\label{prop:whenpresbij}
If $\f C$ admits (objective, fully faithful) factorisations, then the following are equivalent:
\begin{enumerate}
\item (Objective, fully faithful) factorisations lift to $\TAlgs$;
\item $T$ preserves objective morphisms;
\item Every map of $\TAlgs$ admits a factorisation $f = g \circ h$ where $Uh$ is objective and $Ug$ is fully faithful.
\end{enumerate}
\end{Proposition}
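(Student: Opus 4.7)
The plan is to prove the cycle $(1) \Rightarrow (3) \Rightarrow (2) \Rightarrow (1)$. The first implication is immediate from the statement of (1): a factorisation $f = g \circ h$ in $\TAlgs$ with $h$ objective and $g$ fully faithful in $\TAlgs$ has, by the characterisation clause of (1), $Uh$ objective and $Ug$ fully faithful in $\f C$.

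For $(2) \Rightarrow (1)$, I would start from a strict morphism $f \colon (A,a) \to (B,b)$ and factor its underlying $\f C$-map as $f = m \circ e$ with $e \colon A \to C$ objective and $m \colon C \to B$ fully faithful. Since (2) makes $Te$ objective, orthogonality of $Te$ against $m$, applied to the square $m \circ ea = bTm \circ Te$ that commutes because $f$ is strict, produces a unique $c \colon TC \to C$ with $c \circ Te = ea$ and $m \circ c = b \circ Tm$. The same orthogonality, in its $1$-cell uniqueness form, delivers the unit and associativity axioms for $c$ (invoking (2) once more so that $T^2 e$ is objective in the associativity case), so that $e$ and $m$ lift to strict morphisms $(A,a) \to (C,c) \to (B,b)$. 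To complete the lifting claim of (1) I would then identify the intrinsic $\TAlgs$-objectives and $\TAlgs$-fully-faithfuls with the $U$-preimages of their $\f C$-counterparts. For fully faithfuls both directions are formal: $U$ as a right $2$-adjoint preserves them, and the converse uses that $U$ is faithful on $2$-cells together with the algebra compatibility for $2$-cells in $\TAlgs$. For objectives I would observe that the $U$-preimage classes form a factorisation system on $\TAlgs$ --- orthogonality is inherited from $\f C$ by the same algebra-compatibility argument at both $1$- and $2$-cell level, and factorisations have just been constructed --- and then invoke uniqueness of factorisation systems to match the two classes.

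The main obstacle is $(3) \Rightarrow (2)$. Given $h \colon X \to Y$ objective in $\f C$, applying (3) to $Fh \colon FX \to FY$ yields a factorisation $Fh = g \circ h'$ in $\TAlgs$ with $Uh'$ objective and $Ug$ fully faithful, so that $Th = Ug \circ Uh'$ is an (objective, fully faithful) factorisation of $Th$ in $\f C$. One would like $Ug$ to be an isomorphism, but direct orthogonality appears at most to make $g$ a surjective equivalence in $\TAlgs$, which in the $2$-categorical setting need not be invertible. I will instead exhibit $Fh$ as a retract of $h'$ in the arrow $2$-category $\TAlgs^{\mathbf 2}$. To this end, orthogonality of $h$ against the fully faithful $Ug$, applied to the square with vertical arrows $h$ and $Ug$ and horizontal arrows $Uh' \circ \eta_X$ and $\eta_Y$, yields a unique diagonal $d \colon Y \to UW'$ with $d \circ h = Uh' \circ \eta_X$ and $Ug \circ d = \eta_Y$; transposing $d$ across $F \dashv U$ gives $\hat d \colon FY \to W'$ in $\TAlgs$, and uniqueness of adjoint transposes forces $g \circ \hat d = 1_{FY}$ and $\hat d \circ Fh = h'$. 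These identities express $Fh$ as a retract of $h'$ in $\TAlgs^{\mathbf 2}$; applying $U$ componentwise shows $Th$ is a retract of $Uh'$ in $\f C^{\mathbf 2}$, and since objectives are closed under retracts there, $Th$ is objective.
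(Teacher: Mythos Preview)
Your proof is correct, but the route differs from the paper's in interesting ways. The paper proves the cycle $(1)\Rightarrow(2)\Rightarrow(3)\Rightarrow(1)$, whereas you run $(1)\Rightarrow(3)\Rightarrow(2)\Rightarrow(1)$. The substantive steps line up as follows. Your construction of the lifted factorisation in $(2)\Rightarrow(1)$ is exactly the paper's $(2)\Rightarrow(3)$. Your retract argument in $(3)\Rightarrow(2)$---splitting $Fh$ off from $h'$ using orthogonality of $h$ against $Ug$ in $\f C$ and transposing through $F\dashv U$---is the analogue of the paper's retract argument in $(3)\Rightarrow(1)$, where instead an objective $f$ in $\TAlgs$ is split off from the $U$-objective part of its $(3)$-factorisation; the paper's version lives entirely in $\TAlgs$, while yours passes through the adjunction, but both exploit closure of objectives under retracts in $\f C^{\mathbf 2}$. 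The genuine divergence is in how the two proofs identify the intrinsic objectives of $\TAlgs$ with the $U$-preimage of the objectives in $\f C$: the paper does this by an explicit reflection argument using the canonical coequaliser presentation of each algebra, whereas you observe that once $(U\text{-obj}, U\text{-ff})$ is shown to be a factorisation system and $U\text{-ff}$ is identified with the fully faithfuls of $\TAlgs$, the left class is forced to be $\phantom{}^\perp(\text{ff}) = \text{obj}$. Your approach is slicker here, avoiding the coequaliser computation entirely; the paper's has the mild advantage of establishing the preservation and reflection claims separately, which can be useful elsewhere.
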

\begin{proof}
(1) $\Rightarrow$ (2) is trivial: for if (1) holds, then $U$ preserves objectives, as does its left adjoint $F$; whence $T = UF$ will also preserve objectives. We consider next (2) $\Rightarrow$ (3). Given a $T$-algebra map $f \colon (A,a) \to (C,c)$, we factorise it in $\f C$ as an objective $h \colon A \to B$ followed by a fully faithful $g \colon B \to C$. Now in the diagram
\begin{equation*}
 \cd[@+0.5em]{TA \ar[r]^-{Th} \ar[d]_{h \circ a} & TB \ar[d]^{c \circ Tg} \ar@{-->}[dl]^b \\
  B \ar[r]_-g & C}
\end{equation*}
the top edge is objective and the bottom edge fully faithful, whence there is a unique diagonal filler $b$ as indicated. It is easy to verify that this $b$ makes $B$ into a $T$-algebra, with respect to which $g$ and $h$ are strict $T$-algebra maps. So (2) $\Rightarrow$ (3).

Finally we show that (3) $\Rightarrow$ (1). Since $U$ preserves limits and is conservative, it preserves and reflects fully faithfuls; to obtain (1), it will suffice to show that $U$ also preserves and reflects objectives. For the preservation, let $f \colon X \to Y$ be objective in $\TAlgs$. By assumption, we have a factorisation $f = gh$ with $Ug$ fully faithful and $Uh$ objective. Since $U$ reflects fully faithfuls, $g$ is fully faithful in $\TAlgs$ and so in the diagram
\begin{equation*}
 \cd[@+0.5em]{X \ar[r]^-{f} \ar[d]_{h} & Y \ar[d]^{1} \ar@{-->}[dl]^k \\
  Z \ar[r]_-g & Y}\end{equation*}
there is a unique filler $k$ as indicated. This makes $f$ a retract of $h$ in $\TAlgs^\mathbf 2$; whence $Uf$ is a retract of the objective $Uh$ in $\f C^\mathbf 2$, and thus itself objective. So $U$ preserves objectives; we must finally show that it also reflects them. Let $f \colon X \to Y$ in $\TAlgs$ with $Uf$ objective, and consider the commuting diagram:
\begin{equation*}
\cd[@C+1em]{
  FUFUX \ar@<4pt>[r]^-{FU\epsilon_X}
  \ar@<-4pt>[r]_-{\epsilon_{FUX}} \ar[d]_{FUFUf} &
  FUX \ar[r]^-{\epsilon_X} \ar[d]^{FUf} & X \ar[d]^f \\
  FUFUY \ar@<4pt>[r]^-{FU\epsilon_Y}
  \ar@<-4pt>[r]_-{\epsilon_{FUY}}  &
  FUY \ar[r]_-{\epsilon_Y} & Y\rlap{ .}}
\end{equation*}
Both rows are coequalisers in $\TAlgs$, and so the whole diagram is a coequaliser in $\TAlgs^\mathbf 2$. But since $Uf$ is objective, and both $U$ and $F$ preserve objectives, the two left-hand columns are objective, and so their coequaliser $f$ is as well. \end{proof}

\looseness=-1
As we have said, our characterisation result will assume that $\f C$ has (objective, fully faithful) factorisations which lift to $\TAlgs$; it then aims to show that the pie $T$-algebras are the algebras ``free at the level of objects''. In order to give meaning to this last phrase, we will make a further assumption on $\f C$, ensuring that every object of $\f C$ has a universal cover by one which is \emph{discrete} in a suitable sense. Then the $2$-monad $T$ will induce an ordinary monad $\Tp$ on the sub-$1$-category of $\f C$ spanned by these discrete objects, and each $T$-algebra structure on $X \in \f C$ gives rise to a $\Tp$-algebra structure on $X$'s discrete cover. Now in saying that a $T$-algebra is ``free at the level of objects'', we mean to say that this induced $\Tp$-algebra is free.

The most obvious notion of discrete object in a $2$-category is the following: we say that $X \in  \f C$ is \emph{representably discrete} when $\f C(A, X)$ is a discrete category for each $A \in \f C$. However, for our purposes it is a different notion which will be relevant. We call an object $X$ \emph{projectively discrete} if the covariant hom-functor $\f C(X, \thg)$ preserves objective morphisms. In elementary terms, this says that each morphism $f \colon X \to Z$ of $\f C$ admits a unique factorisation through each objective morphism $g \colon Y \to Z$, or more briefly: $X$ sees objective morphisms as bijective. In $\Cat$, the projectively discrete objects are precisely the discrete categories; similarly, when $\f J$ is a locally discrete $2$-category, the projectively discrete objects of $[\f J, \Cat]$ are those functors taking their image in discrete categories. In both cases, an object is projectively discrete if and only if it is representably discrete, but this need not always be so. If $\f J$ is a small $2$-category which is not locally discrete, then in $[\f J, \Cat]$, all representables are projectively discrete, but not all are representably discrete. This projective discreteness of representables is an instance of a more general phenomenon: if a right adjoint $2$-functor $\f C \to \f D$ preserves objective morphisms, then the corresponding left adjoint will preserve projective discretes. By the preceding proposition, this is so for the free/forgetful adjunction $F \colon \f C \leftrightarrows \TAlgs \colon U$ when $\f C$ admits (objective, fully faithful) factorisations and $T$ preserves objectives, so that in this case free algebras on projective discretes are again projectively discrete.

A $2$-category $\f C$ will be said to have \emph{enough \projs} if every $X \in \f C$ admits an objective morphism from a projective discrete.  It is easy to see that this is the case when $\f C = \Cat$, or when $\f C = [\f J, \Cat]$ for some small and locally discrete $\f J$; as we will explain shortly, it is in fact true of $[\f J, \Cat]$ for any small 2-category $\f J$.
 If $\f C$ is a $2$-category with enough \projs, then on choosing for each $X \in \f C$ an objective morphism $\lambda_X \colon DOX \to X$ from a projective discrete, we obtain a coreflection
 \begin{equation}\label{eq:discobj}
  \cd[@C+1.5em]{
  \Cp \ar@<4.5pt>[r]^-{D} \ar@{}[r]|-{\bot} & \ar@<4.5pt>[l]^-{O} \Co
  }
\end{equation}
with pointwise objective counit; here $\Co$ is the underlying category of $\f C$, and $\Cp$ is its full subcategory on the projective discretes. In this circumstance it is easy to see that the maps inverted by $O$ are precisely the objective morphisms. 

Note that, for example, $\Cat_d$ is the full subcategory of $\Cat$ spanned by the discrete categories; in practice, we would rather work with the equivalent category of sets and functions. 
In our examples, we will therefore allow $\Cp$ to denote a category which is merely \emph{equivalent} to the full subcategory of $\Co$ on the projective discretes; this does not change the properties of the adjunction~\eqref{eq:discobj}, these being equivalence-invariant, but does afford us the convenience of taking $\Cat_d = \Set$, or $[\f J, \Cat]_d = [\f J_0, \Set]$ for locally discrete $\f J$, and so on; henceforth, we will do so without further comment.  

In our examples, we only need to know that $[\f J,\Cat]$ has enough discretes for $\f J$  locally discrete; but in fact, as remarked above, \emph{every} presheaf $2$-category $[\f J,\Cat]$ has enough discretes, again with $[\f J, \Cat]_d = [\f J_0, \Set]$. As the theory we develop will  apply also in this situation, let us briefly justify this less obvious claim. Consider the adjunction
\begin{equation*}
\cd{
      [\f J_{0}, \Set] \ar@<4.5pt>[r]^-{D} \ar@{}[r]|-{\bot}   &
      \ar@<4.5pt>[l]^-{O} [\f J_{0}, \Cat]_{0} \ar@<4.5pt>[r]^-{F} \ar@{}[r]|-{\bot} &
      \ar@<4.5pt>[l]^-{U} [\f J, \Cat]_{0}
}
\end{equation*}
obtained by composing that which exhibits $[\f J_{0},\Set]$ as $[\f J_{0},\Cat]_{d}$ with that given by restriction and left Kan extension along the inclusion $2$-functor $\f J_{0} \to \f J$.  Clearly $U$ preserves objectives, and so $F$ preserves projective discretes; thus objects in the image of $FD$ are projectively discrete, and so we will be done if we can show the counit $\epsilon \colon FDOU \to 1$ to be pointwise objective. Since $U$ preserves and reflects objectives and $O$ inverts precisely the objectives, this is equally to show that $OU\epsilon$ is invertible, which will follow if we can show the unit $\eta \colon 1 \to OUFD$ to be invertible. Now, the component of $\eta$ at a representable $\f J_0(X, \thg)$ is the image under $O$ of the objective morphism $\f J_0(X, \thg) \to \f J(X, \thg)$ of $[\f J_0, \Cat]_0$, and so invertible; but since each of $O$, $U$, $F$ and $D$ preserve ($1$-dimensional) colimits, we conclude that \emph{every} component of $\eta$ is invertible. Thus $OU\epsilon$ is invertible, $\epsilon$ is pointwise objective, and so
$[\f J, \Cat]$ has enough discretes with $[\f J,\Cat]_{d}=[\f J_{0},\Set]$ as claimed. 

Suppose now that $\f C$ is a $2$-category with enough discretes, and $T$ a $2$-monad on it. Underlying the free-forgetful adjunction of $T$, we have the ordinary adjunction $\fo \colon \Co \leftrightarrows (\TAlgs)_0 \colon \uo$ generating the monad $\To$ on $\Co$; and quite clearly $\ToAlg = (\TAlgs)_0$. Composing~\eqref{eq:discobj} with this adjunction, we induce a monad $\Tp = OT_0 D$ on $\Cp$,
 and now have the canonical comparison functor $j \colon \ToAlg \to \TpAlg$ making both triangles commute in:
\begin{equation}\label{eq:situation}
\cd[@!C@+0.5em]{
 \ToAlg \ar[rr]^j \ar@<-3pt>[dr]_-{O\us}  & &
 \TpAlg \rlap{ .} \ar@<3pt>[dl]^-{\up} \\ &
\Cp \ar@<-3pt>[ul]_-{\fs D} \ar@<3pt>[ur]^-{\fp}}
\end{equation}
Explicitly, $j$'s action on objects is given by:
\[(a \colon TA \to A) \qquad \mapsto \qquad (OTDOA \xrightarrow{OT\lambda_A} OTA \xrightarrow{Oa} OA)\rlap{ .}\]
It is in terms of this functor $j$ that we may speak of a $T$-algebra's being ``free at the level of objects''; and we are thus ready to state our characterisation result.
\begin{Theorem}\label{thm:charthm}
Let $\f C$ have enough discretes and (objective, fully faithful) factorisations lifting to $\TAlgs$. If $\Cp$ has, and the induced monad $\Tp$ preserves, coreflexive equalisers, then a $T$-algebra $A$ is pie if and only if $jA$ is a free $\Tp$-algebra.
\end{Theorem}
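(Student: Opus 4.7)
My plan is to prove the biconditional by first establishing a key lemma and then using it in each direction. The key lemma is that $jQB$ is a free $\Tp$-algebra for every $B \in \TAlgs$; specifically, $jQB \cong \fp(OUB)$. Morally, $QB$ is built from the free resolution of $B$ by adjoining new $1$- and $2$-cells (to freely represent pseudomorphism structure) but no new objects; so at the ``object level'', $QB$ looks like a free $T_0$-algebra on the discrete cover of $B$, and this is preserved upon passing to $jQB \in \TpAlg$. To make this precise, I would unpack the $Q$-construction of \cite{Blackwell1989Two-dimensional} and factor the structure morphisms via the (objective, fully faithful) system lifted to $\TAlgs$ by Proposition~\ref{prop:whenpresbij}: the objective parts survive under $O$ while the fully faithful parts are inverted.

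For the forward direction, a strict coalgebra map $c \colon A \to QA$ becomes, under $j$, a split monomorphism into the free $\Tp$-algebra $jQA$, with retract $jp_A$. The coassociativity $Qc \cdot c = \Delta_A \cdot c$ passes to $\TpAlg$ (where the key lemma applied to $Q^2 A$ identifies $jQ^2 A$ as free) and endows $(jA, jc)$ with the structure of a coalgebra for the $1$-comonad $\fp \up$ on $\TpAlg$. By Beck's comonadicity theorem applied to the adjunction $\fp \dashv \up$---whose hypotheses are met thanks to our standing assumption that $\Tp$ preserves coreflexive equalisers---such coalgebras are equivalent to $\Cp$ and correspond precisely to free $\Tp$-algebras. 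For the reverse direction, suppose $jA \cong \fp X$. Combined with the key lemma we may take $X = OUA$ to obtain an isomorphism $jA \cong jQA$; we must lift this to a strict map $c \colon A \to QA$ in $\TAlgs$ satisfying the coalgebra axioms. Here I would exploit the universal property of $Q$, namely that strict maps $A \to QA$ classify pseudomorphisms $A \rightsquigarrow A$, and produce such a pseudomorphism as the free extension determined by the free presentation of $jA$. The coalgebra axioms then follow from the uniqueness of these free extensions and from naturality.

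The main obstacle I expect is the key lemma: extracting from the abstract definition of $Q$ as left adjoint to $\iota$ a description sufficiently concrete that one can read off $jQB$ as a free $\Tp$-algebra. The codescent object (or coinserter-coequifier) presentation of $Q$ from \cite{Blackwell1989Two-dimensional} should suffice, but verifying that no new objects are introduced at the discrete level relies crucially on the lifted (objective, fully faithful) factorisation. A subsidiary difficulty in the reverse direction is ensuring that the constructed coalgebra map $c$ is strict rather than merely pseudo and that it is genuinely coassociative; the orthogonality inherent in the factorisation system, together with the coreflexive-equaliser hypothesis ensuring the well-behavedness of $j$ (and hence uniqueness of the lifted data), should provide the rigidity needed.
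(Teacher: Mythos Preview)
Your key lemma---that $jQB$ is free on $OUB$, i.e.\ $jQ \cong \fp\up j$ as comonads---is exactly what the paper establishes, but by a shorter route than unpacking the codescent presentation of $Q$. The paper observes that the canonical map $\rho_A \colon FUA \to QA$ (the transpose of $Uq_A$) is objective in $\TAlgs$; this is proved directly by an orthogonality argument against fully faithfuls, using only that $p_A$ is fully faithful. Since $j$ inverts objectives, and since $F\lambda_U \colon FDOU \to FU$ is also pointwise objective, one gets $jQ \cong jFU \cong jFDOU = \fp\up j$ at once. Your codescent approach would work but is heavier.

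In the forward direction your idea is right, but the appeal to Beck comonadicity overshoots: $\fp$ need not reflect isomorphisms, so $\fp \dashv \up$ is not comonadic in general. What the hypotheses actually give (and all that is needed) is the weaker statement that every $\fp\up$-coalgebra is free: for a coalgebra $(A,a)$ the pair $(\eta_{\up A}, \up a)$ is coreflexive, its equaliser $E \rightarrowtail \up A$ exists in $\Cp$ and is preserved by $\fp$, and one checks $A \cong \fp E$. The paper isolates this as Proposition~\ref{prop:pieproj} (pie $\Leftrightarrow$ $jA$ admits $\fp\up$-coalgebra structure) plus the final equaliser argument.

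The reverse direction contains a genuine gap. You write that ``strict maps $A \to QA$ classify pseudomorphisms $A \rightsquigarrow A$''; this is the wrong variance---the universal property gives $\TAlgs(QA,B) \cong \TAlg(A,B)$, which says nothing about maps \emph{into} $QA$. You also cannot assume $X = OUA$ from $jA \cong \fp X$. The real problem is lifting an $\fp\up$-coalgebra structure $g \colon jA \to \fp\up jA \cong jQA$ back to a strict map $a \colon A \to QA$ satisfying the coalgebra axioms, and $j$ is neither full nor faithful in general. The paper's mechanism is a separate lemma: the sliced functor $j_A \colon \ToAlg/_{\mathrm{ff}} A \to \TpAlg/jA$ is \emph{fully faithful} when restricted to fully faithful maps into $A$. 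Since both $1_A$ and $p_A \colon QA \to A$ are fully faithful, the map $\phi^{-1}\!\circ g \colon jA \to jQA$ over $jA$ lifts uniquely to $a \colon A \to QA$ with $p_A a = 1$, and coassociativity follows by the same uniqueness applied over $pp_A \colon Q^2A \to A$. Your sketch does not supply this lifting principle, and orthogonality alone does not provide it.
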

The remainder of this section gives the proof of this theorem. To begin with, we assume only our standing hypotheses that $\f C$ be complete and cocomplete and that $T$ have rank. We then have the adjunction $\iota \colon \TAlgs \leftrightarrows \TAlg \colon Q$ as before, and underlying each unit map $\qun_A \colon A \rightsquigarrow QA$ a morphism $\uu\qun_A \colon \us A \to \us QA$ in $\f C$. Transposing this under the free-forgetful adjunction $\fs \colon \f C \leftrightarrows \TAlgs \colon \us$ yields a map in $\TAlgs$, which we denote by $\rho_A \colon \fs \us A \to QA$.
\begin{Lemma}\label{lem:rhoobj}
Each $\rho_A$ is an objective morphism of $\TAlgs$.
\end{Lemma}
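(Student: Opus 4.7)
The plan is to verify the pullback condition defining objectivity by passing through the adjunctions $F \dashv U$ and $Q \dashv \iota$. For any fully faithful $g \colon W \to Z$ in $\TAlgs$, the hom-category square to be shown a pullback has corners $\TAlgs(QA, W)$, $\TAlgs(QA, Z)$, $\TAlgs(FUA, W)$ and $\TAlgs(FUA, Z)$. Using the isomorphisms $\TAlgs(QA, \thg) \cong \TAlg(A, \thg)$ and $\TAlgs(FUA, \thg) \cong \f C(UA, U\thg)$, and recalling that $\rho_A$ is the $F \dashv U$-transpose of $Uq_A$, this square is isomorphic to the square with corners $\TAlg(A, W)$, $\TAlg(A, Z)$, $\f C(UA, UW)$, $\f C(UA, UZ)$, with vertical maps the forgetful functors. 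So what must be shown is: given a pseudomorphism $\tilde h \colon A \rightsquigarrow Z$ and a morphism $k \colon UA \to UW$ in $\f C$ with $Ug \circ k = U\tilde h$, there is a unique pseudomorphism $\tilde k \colon A \rightsquigarrow W$ with $U\tilde k = k$ and $g \tilde k = \tilde h$; and analogously for $2$-cells.

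A preliminary observation is that $g$ fully faithful in $\TAlgs$ implies $Ug$ fully faithful in $\f C$; this follows by transposing the defining hom-condition for $g$ along $F \dashv U$. With this in hand, the lift is constructed as follows. Write $a, w, z$ for the algebra structures on $A, W, Z$. Using strictness of $g$ and the identity $Ug \circ k = U\tilde h$, the pseudomorphism $2$-cell $\bar{\tilde h} \colon z \circ TU\tilde h \cong U\tilde h \circ a$ rewrites as $Ug \cdot (w \circ Tk) \cong Ug \cdot (k \circ a)$; since $Ug$ is fully faithful, it has a unique preimage $\bar{\tilde k} \colon w \circ Tk \cong k \circ a$, and this preimage is invertible because fully faithful maps in a $2$-category reflect invertibility of $2$-cells. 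The two pseudomorphism coherence axioms for $(k, \bar{\tilde k})$ then follow from the corresponding axioms for $\bar{\tilde h}$ by whiskering with $Ug$ and cancelling. The same cancellativity argument yields uniqueness of $\tilde k$ and handles the $2$-cell level of the pullback.

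The main obstacle is the bookkeeping in the coherence transfer: one must verify that each of the two pseudomorphism axioms for $\bar{\tilde k}$, after whiskering by $Ug$, reduces to the corresponding axiom for $\bar{\tilde h}$, using strictness of $g$ (as $Ug \circ w = z \circ TUg$) together with naturality of the $T$-action on $2$-cells. These are routine diagram chases, and no finer property of $q_A$ or $QA$ is needed beyond the fact that $q_A$ is the universal pseudomorphism out of $A$.
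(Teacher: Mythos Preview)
Your proof is correct and follows essentially the same approach as the paper's. Both arguments transpose the orthogonality condition through the adjunctions $F \dashv U$ and $Q \dashv \iota$ to reduce it to the statement that a pseudomorphism structure on a map factoring through a fully faithful $g$ lifts uniquely along $g$; both then deduce this from the fact that $Ug$ is fully faithful in $\f C$, allowing the pseudomorphism $2$-cell to be reflected back through $Ug$. The only cosmetic difference is that you phrase the problem as a pullback of hom-categories whereas the paper phrases it as a diagonal-filler condition, but these are the same content.
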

\begin{proof}
One way of proving this would be to analyse $Q$ in terms of isocodescent objects, as was done in~\cite{Lack2002Codescent}; however, it is just as easy to proceed directly. For the one-dimensional aspect of $\rho_A$'s objectivity, we must show that for any square \begin{equation*}
\cd{
  \fs \us A \ar[r]^-{\rho_A} \ar[d]_h & QA \ar@{-->}[dl] \ar[d]^{k} \\
  C \ar[r]_-f & D
}
\end{equation*}
in $\TAlgs$ with $f$ fully faithful, there is a unique map $QA \to C$ as indicated making both induced triangles commute. Equivalently, after transposing under adjunction, we must show that for any pseudomorphism $(k, \bar k) \colon A \rightsquigarrow D$, each factorisation of $k \colon \us A \to \us D$ through $\us f$ is the image under $\uu$ of a unique factorisation of $(k, \bar k)$ through $f$. Thus given $k =  \us f \circ h$ we seek a unique $\bar h$ such that $(h, \bar h) \colon A \rightsquigarrow C$ is a pseudomorphism with $(k, \bar k) = f \circ (h, \bar h)$. This last condition asserts a pasting equality
\begin{equation*}
\cd[@C+1em]{
  \us\fs\us A \ar[r]^{\us\fs k} \dtwocell{dr}{\bar k} \ar[d]_{U\epsilon_A} &  \us\fs \us D \ar[d]^{U\epsilon_D} \\
  \us A \ar[r]_{k} & \us D
} \qquad = \qquad
\cd[@C+1em]{
  \us\fs\us A \ar[r]^{\us\fs h} \ar[d]_{U\epsilon_A} \dtwocell{dr}{\bar h} & \us\fs\us C \ar[r]^{\us\fs\us f} \ar[d]|{U\epsilon_C} \twoeq{dr} & \us\fs \us D \ar[d]^{U\epsilon_D} \\
  \us A \ar[r]_{h} &  \us C \ar[r]_{\us f} & \us D\rlap{ ;}
}
\end{equation*}
but as $f$ is fully faithful in $\TAlgs$, $\us f$ is fully faithful in $\f C$, whence there is exactly  one $2$-cell $\bar h$ for which this is so. It is now straightforward to check that this $\bar h$ does indeed make $(h, \bar h)$ into a pseudomorphism $A \rightsquigarrow C$. This verifies the one-dimensional aspect of $\rho_A$'s objectivity, and the two-dimensional aspect follows similarly.
\end{proof}
\begin{Lemma}\label{cor:comonadmap}
The $\rho_A$'s are the components of a comonad morphism $\rho \colon \fs \us \to Q$; moreover, $Q$ is, up to isomorphism, the unique comonad on $\TAlgs$ that has pointwise fully faithful counit and admits a pointwise objective comonad morphism from $FU$.
\end{Lemma}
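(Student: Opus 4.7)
The plan has two steps: verify directly that $\rho$ is a comonad morphism, and then deduce the uniqueness clause from the orthogonality of objectives to fully faithfuls. Writing $\epsilon \colon \fs \us \to 1$ and $\eta \colon 1 \to \us \fs$ for the counit and unit of $\fs \dashv \us$, the formula $\rho_A = \epsilon_{QA} \cdot \fs \us(q_A)$ exhibits $\rho$ as the adjoint transpose of $\us q \colon \us \to \us Q$, so naturality is automatic. The counit condition $p \cdot \rho = \epsilon$ drops out of naturality of $\epsilon$ and the triangle identity $p_A \cdot q_A = 1_A$ in $\TAlg$. For the comultiplication axiom
\[
Q(q_A) \cdot \rho_A \;=\; Q\rho_A \cdot \rho_{\fs \us A} \cdot \fs \eta_{\us A}\text{,}
\]
I would reduce both sides to the common expression $\epsilon_{Q^2 A} \cdot \fs \us(q_{QA}) \cdot \fs \us(q_A)$. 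The left-hand side reaches it by naturality of $\epsilon$ at $Q(q_A)$ followed by naturality of $q$ at the pseudomorphism $q_A$, which gives $Q(q_A) \cdot q_A = q_{QA} \cdot q_A$ in $\TAlg$. The right-hand side reaches it by the interchange form $(\rho \ast \rho)_A = \rho_{QA} \cdot \fs \us(\rho_A)$, together with the equation $\us(\rho_A) \cdot \eta_{\us A} = \us(q_A)$, itself a consequence of the $\fs \dashv \us$ triangle identity.

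For uniqueness, let $(G, p', \delta')$ be another comonad on $\TAlgs$ with $p'$ pointwise fully faithful, equipped with a pointwise objective comonad morphism $\sigma \colon \fs \us \to G$. The counit axioms for $\rho$ and $\sigma$ exhibit $\epsilon_A$ with two $(\text{objective}, \text{fully faithful})$ factorisations
\[
p_A \cdot \rho_A \;=\; \epsilon_A \;=\; p'_A \cdot \sigma_A\text{,}
\]
and orthogonality supplies a unique $\phi_A \colon QA \to GA$ with $\phi_A \cdot \rho_A = \sigma_A$ and $p'_A \cdot \phi_A = p_A$; a symmetric argument produces a two-sided inverse, so each $\phi_A$ is an isomorphism. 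Naturality of $\phi$ comes from a second appeal to the fill-in: for $f \colon A \to A'$, both $\phi_{A'} \cdot Qf$ and $Gf \cdot \phi_A$ are diagonal fillers of the outer square with top $\rho_A$, bottom $p'_{A'}$, and the common legs $\sigma_{A'} \cdot \fs \us f$ and $f \cdot p_A$.

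Finally, $\phi$ respects the counit by construction, and to see that it respects the comultiplication I would argue by transport: pushing $\delta'$ across $\phi$ yields a second comultiplication $\delta''$ on $(Q, p)$ that, since $\rho = \phi^{-1} \cdot \sigma$, still makes $\rho$ a comonad morphism. The key observation is that any comultiplication $\delta$ rendering $(Q, p, \delta)$ a comonad and $\rho$ a comonad morphism is forced to be the unique diagonal filler of a commuting square with top $\rho_A$ (objective), bottom $p_{QA}$ (fully faithful), left leg $Q\rho_A \cdot \rho_{\fs \us A} \cdot \fs \eta_{\us A}$, and right leg $1_{QA}$; commutativity of this square reduces, via naturality of $p$ and the counit axiom for $\rho$, to the $\fs \dashv \us$ triangle identity. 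Both $\delta_Q$ and $\delta''$ are such fillers, so they coincide and $\phi$ is a comonad isomorphism. The only step that cannot be bypassed by formal reasoning is the direct computation of the comultiplication axiom for $\rho$; once that is in hand, the uniqueness clause just articulates the general principle that a comonad structure on $Q$ under which $\rho$ becomes a comonad morphism is uniquely determined by the orthogonality data.
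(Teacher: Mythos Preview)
Your proof is correct and follows essentially the same line as the paper's. The verification that $\rho$ is a comonad morphism is the same argument: the paper transposes both sides of the comultiplication axiom under $F \dashv U$ and reduces to $Uq_Q \circ Uq$, whereas you carry out the equivalent computation explicitly via $\epsilon_{Q^2A} \cdot FU(q_{QA}) \cdot FU(q_A)$; in either case the crux is the naturality equation $Q(q_A) \cdot q_A = q_{QA} \cdot q_A$.

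For the uniqueness clause the paper is slightly more direct: having obtained the isomorphism $\alpha \colon Q \to R$ from orthogonality, it shows $d \circ \alpha = \alpha\alpha \circ \Delta$ in one step, by observing that both are diagonal fillers of the square with objective top $\rho$, fully faithful bottom $e_R$, and sides $\sigma\sigma \circ F\eta U$ and $\alpha$. Your route---transporting $\delta'$ back along $\phi$ to a second comultiplication $\delta''$ on $Q$, and then using the square with top $\rho_A$ and bottom $p_{QA}$ to show $\delta'' = \Delta$---is a valid reorganisation of the same orthogonality idea, and has the mild advantage of isolating the general principle you state at the end: the comultiplication on $Q$ is already determined by $p$, $\rho$, and the requirement that $\rho$ be a comonad morphism.
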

\begin{proof}
It is easy to see that $\rho$ is $2$-natural, since $q$ is, and also that the axiom $p \circ \rho = \epsilon \colon \fs \us \to 1$ expressing compatibility with the counits of $Q$ and $FU$ is satisfied. We must further show that $\Delta \circ \rho = \rho \rho \circ \fs \eta \us$, where $\Delta = Qq$ is the comultiplication of $Q$. The map $\Delta\circ \rho$ corresponds under adjunction to $\us\Delta\circ \uu\qun = \us Q\qun \circ \uu\qun = \uu\qun Q \circ \uu\qun$, but also $\rho \rho \circ  \fs \eta \us = \rho Q \circ  \fs \us \rho \circ \fs \eta \us$ corresponds to $\uu\qun Q\circ  \us \rho\circ  \eta \us = \uu \qun Q\circ \uu q$, so that $\Delta \circ \rho = \rho \rho \circ \fs \eta \us$ as required.

Now given any comonad $(R, e, d)$ on $\TAlgs$ which has pointwise fully faithful counit $e$ and admits a pointwise objective comonad morphism $\sigma \colon FU \to R$, we have a commutative square in $[\TAlgs, \TAlgs]$ as on the left of
\begin{equation*}
\cd[@+1em]{
FU \ar[r]^\rho \ar[d]_\sigma & Q \ar[d]^p \ar@{-->}[dl]^{\alpha} \\
R \ar[r]_e & 1} \qquad \qquad
\cd[@C+1em@R-1em]{
  \fs \us \ar[r]^-{\rho} \ar[d]_{\fs \eta \us} &
  Q \ar@{-->}[ddl] \ar[dd]^\alpha \\
  \fs \us \fs \us \ar[d]_{\sigma \sigma} \\
  RR \ar[r]_{eQ} & R\rlap{ .}
}
\end{equation*}
Both sides of this square being (objective, fully faithful) factorisations of $\epsilon$, there is a unique isomorphism $\alpha \colon Q \to R$ as indicated making both triangles commute. By construction, this $\alpha$ is compatible with the counits of $Q$ and $R$; whilst compatibility with the comultiplications follows by observing that the square on the right above has, by orthogonality, a unique diagonal filler, and that both $d \circ \alpha$ and $\alpha\alpha \circ \Delta$ are such fillers.
\end{proof} 

In the statement of the following result, we write $\ToAlg /_\mathrm{ff} A$ for the full subcategory of $\ToAlg / A$ spanned by the fully faithful maps into $A$.
\begin{Lemma}
If $\f C$ has enough \projs, and $T$ preserves objectives, then for each $T$-algebra $A$, the functor
\begin{equation*}
    j_A \colon \ToAlg /_\mathrm{ff} A \to \TpAlg / jA\rlap{ ,}
\end{equation*}
induced by slicing the comparison functor $j$ of~\eqref{eq:situation}, is fully faithful.
\end{Lemma}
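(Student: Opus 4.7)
The key tools are the naturality of the counit $\lambda$ of the adjunction $D \dashv O$, the fact that each $\lambda_X$ is objective in $\f C$, and the orthogonality between objective and fully faithful morphisms. Since $\us$ preserves fully faithfuls, the underlying $\f C$-morphisms of any $f,g$ in $\ToAlg /_\mathrm{ff} A$ are fully faithful in $\f C$, so that the relevant orthogonality squares will always have a fully faithful right edge.

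For faithfulness, suppose $h, h' \colon B \to C$ in $\ToAlg$ satisfy $gh = f = gh'$ and $Oh = Oh'$. Naturality of $\lambda$ gives $h \cdot \lambda_B = \lambda_C \cdot DOh = \lambda_C \cdot DOh' = h' \cdot \lambda_B$; so $h$ and $h'$ are both fillers for the orthogonality square with objective left edge $\lambda_B$ and fully faithful right edge $g$, forcing $h = h'$. For fullness, given $k \colon jB \to jC$ in $\TpAlg$ with $jg \cdot k = jf$, I define $h \colon B \to C$ as the unique orthogonal filler of
\begin{equation*}
\cd{DOB \ar[r]^-{\lambda_C \cdot Dk} \ar[d]_{\lambda_B} & C \ar[d]^g \\ B \ar[r]_-f & A\rlap{ ,}}
\end{equation*}
which commutes by naturality of $\lambda$ and the hypothesis $Og \cdot k = Of$. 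By construction $gh = f$ and $h \cdot \lambda_B = \lambda_C \cdot Dk$; applying $O$ to the latter and using that each $O\lambda_X$ is the inverse of the unit isomorphism $\eta_{OX}$ of $D \dashv O$, a brief chase yields $Oh = k$.

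The main obstacle is to show that the constructed $h$ is a strict $T$-algebra morphism, that is, $\phi := h \cdot b$ equals $\psi := c \cdot Th$. My plan is to display both $\phi$ and $\psi$ as fillers of an orthogonality square with objective left edge $\lambda_{TB}$ and fully faithful right edge $g$. That $g\phi = g\psi$ is immediate: both composites equal $a \cdot Tf$, since $f$ and $g$ are strict algebra morphisms. For $\phi \cdot \lambda_{TB} = \psi \cdot \lambda_{TB}$, naturality of $\lambda$ reduces this to $DO\phi = DO\psi$, equivalently $O\phi = O\psi$; and since $T$ preserves objectives, $OT\lambda_B$ is invertible, so it suffices to show $O\phi \cdot OT\lambda_B = O\psi \cdot OT\lambda_B$. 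This last equality will then follow from the $\Tp$-algebra condition on $k$ together with the already-established equations $h\lambda_B = \lambda_C \cdot Dk$ and $Oh = k$, using naturality of $\lambda$ to rewrite $Th \cdot T\lambda_B = T\lambda_C \cdot TDk$. Orthogonality then yields $\phi = \psi$, completing the proof.
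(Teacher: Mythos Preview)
Your proof is correct and follows essentially the same approach as the paper's. Both arguments construct the lift $h$ as the unique diagonal filler of the same orthogonality square (objective $\lambda_B$ against fully faithful $g$), verify $Oh = k$ via the adjunction $D \dashv O$, and then establish that $h$ is a strict algebra map by a second orthogonality argument using $\lambda_{TB}$ on the left and $g$ on the right, reducing the precomposition check to the $\Tp$-algebra condition on $k$ via invertibility of $OT\lambda_B$. The only organisational difference is that you treat faithfulness separately, whereas the paper gets it implicitly from uniqueness of the filler.
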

\begin{proof}
Given a $T$-algebra $(A,a)$ and objects $(B, b) \xrightarrow m (A,a)$ and $(C, c) \xrightarrow n (A,a)$   of $\ToAlg /_\mathrm{ff} (A,a)$, we must show that each map $j_A(m) \to j_A(n)$ in $\TpAlg / jA$ is the image under $j_A$ of a unique map $m \to n$. To give a map $j_A(m) \to j_A(n)$ is to give a map $g \colon OB \to OC$ in $\Cp$ making the left-hand diagram below commute:
\begin{equation*}
  \cd{
  	OTDOB \ar[rr]^{OTDg} \ar[d]_{OT\lambda_B} & & OTDOC \ar[d]^{OT\lambda_C} \\
	OTB \ar[d]_{Ob} & & OTC \ar[d]^{Oc} \\
	OB \ar[dr]_{Om} \ar[rr]_{g} && OC \ar[dl]^{On} \\ &
	OA
  }\qquad \quad
      \cd[@+1em]{
        DOB \ar[d]_{\lambda_{C} \circ Dg} \ar[r]^-{\lambda_{B}} &
        B \ar[d]^{m} \ar@{-->}[dl]_k
        \\  C \ar[r]_{n} & A\rlap{ .}
    }
\end{equation*}
Now the commutativity of the lower triangle on the left is equivalent to that of the right-hand square in $\f C_0$; and since $\lambda_{B}$ is objective and $n$ is fully faithful, there is a unique diagonal filler $k$ as indicated. Commutativity of the upper triangle so induced asserts that $\lambda_{C} \circ Dg = k \circ \lambda_{B} = \lambda_{C} \circ DOk$ which, transposing through the adjunction, is equally well the assertion that $g = Ok$. Thus it will follow that $k$ is the desired unique lifting of $g$ so long as we can show that it is in fact a $T$-algebra map $(B,b) \to (C,c)$, or in other words, that $k \circ b = c \circ Tk \colon TB \to C$.

By orthogonality, it will suffice to verify this equality on postcomposition with a fully faithful morphism and on precomposition with an objective one.
On the one hand, we have the fully faithful $n \colon C \to A$, and calculate that $n \circ k \circ b = m \circ b = a \circ Tm = a \circ Tn \circ Tk = n \circ c \circ Tk$ as required. On the other, we have the objective $\lambda_{TB} \colon DOTB \to TB$, and showing that $k \circ b \circ \lambda_{TB} = c \circ Tk \circ \lambda_{TB}$ is equivalent to showing that $Ok \circ Ob = Oc \circ OTk$. Since $Ok = g$, we have from the left-hand commutative diagram above that $Ok \circ Ob \circ OT\lambda_B = Oc \circ OT\lambda_C \circ OTDOk = Oc \circ OTk \circ OT\lambda_B$. But since $\lambda_B$ is objective, so also is $T\lambda_B$, whence $OT\lambda_B$ is invertible; so that $Ok \circ Ob = Oc \circ OTk$ as required.
\end{proof}
Using this, we may now prove a direct predecessor of our main Theorem~\ref{thm:charthm}:
\begin{Proposition}\label{prop:pieproj}
If $\f C$ has enough discretes and (objective, fully faithful) factorisations lifting to $\TAlgs$, then a $T$-algebra $A$ is pie if and only if $jA$ admits $\fp \up$-coalgebra structure.
\end{Proposition}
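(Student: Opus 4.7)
The plan is to construct a natural bijection between $Q$-coalgebra structures on $A$ and $F_d U_d$-coalgebra structures on $jA$, using the comonad morphism $\rho \colon FU \to Q$ from Lemma~\ref{cor:comonadmap} together with the discrete cover $\lambda_A \colon DOA \to A$. Under our hypotheses, Proposition~\ref{prop:whenpresbij} ensures that both $T$ and $U$ preserve objective morphisms; consequently, since $\rho_A$ is objective by Lemma~\ref{lem:rhoobj} and $\lambda_A$ is objective by the enough-discretes assumption on $\f C$, the morphisms $OU\rho_A \colon OTUA \to OUQA$ and $OT\lambda_A \colon OTDOA \to OTUA$ are both invertible in $\Cp$. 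Their composite yields a canonical isomorphism
\begin{equation*}
\phi_A \colon T_d OA = OTDOA \xrightarrow{\ OT\lambda_A\ } OTUA \xrightarrow{\ OU\rho_A\ } OUQA\rlap{ ,}
\end{equation*}
which will mediate between the two kinds of coalgebra structure. Write $\alpha \colon TA \to A$ for the $T$-algebra structure of $A$ and $\beta \colon TUQA \to UQA$ for that of $QA$.

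For the forward direction, given a $Q$-coalgebra structure $a \colon A \to QA$, set $s := \phi_A^{-1} \circ OUa \colon OA \to T_d OA$. The counit axiom $\epsilon_{jA} \circ s = 1_{OA}$---upon unpacking $\epsilon_{jA} = O\alpha \circ OT\lambda_A$ and using $p_A \circ \rho_A = \epsilon_A$ together with $U\epsilon_A = \alpha$---reduces to the image under $OU$ of the counit equation $p_A \circ a = 1_A$. The $T_d$-algebra-map property of $s$ and its coassociativity follow by the same pattern from $a$ being a strict $T$-algebra map satisfying $Qa \cdot a = \Delta_A \cdot a$, combined with naturality of $\rho$ and $\lambda$ and the fact that $\rho$ is a morphism of comonads.

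For the converse, suppose $s \colon OA \to T_d OA$ is an $F_d U_d$-coalgebra structure on $jA$. The natural candidate for $OUa$ is $\phi_A \circ s$; transposing through $D \dashv O$ produces a map $\tilde s \colon DOA \to UQA$ of $\f C$, and the counit condition on $s$ translates into the equation $Up_A \circ \tilde s = \lambda_A$. Since $\lambda_A$ is objective and $Up_A$ is fully faithful (as $p_A$ is and $U$ preserves fully faithfuls), the orthogonality square supplies a unique filler $\bar a \colon A \to UQA$ in $\f C$ with $Up_A \circ \bar a = 1_A$ and $\bar a \circ \lambda_A = \tilde s$. To upgrade $\bar a$ to a strict $T$-algebra map $a \colon A \to QA$ we verify $\bar a \circ \alpha = \beta \circ T\bar a$ as an equation of $1$-cells in $\f C$: the two composites agree after post-composition with the fully faithful $Up_A$ (both give $\alpha$) and after pre-composition with the objective $\lambda_{TA}$ (here is where the $T_d$-algebra-map property of $s$ enters, through the naturality of $\lambda$), so a second invocation of orthogonality, this time at $TA$, forces their equality. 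Coassociativity $Qa \cdot a = \Delta_A \cdot a$ is deduced by the same pattern, using the fully faithfulness of $p_{QA}$.

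The principal obstacle is in the converse direction, and specifically in promoting $\bar a$ from a mere $\f C$-morphism to a strict $T$-algebra map. Post-composition with the fully faithful $Up_A$ alone yields only a coherent $2$-isomorphism between $\bar a \circ \alpha$ and $\beta \circ T\bar a$, which is the defect of a pseudomorphism rather than a strict one; securing the equality of $1$-cells required for strictness is what necessitates the supplementary orthogonality argument at $TA$ outlined above, together with a careful translation of the $T_d$-algebra property of $s$ across the isomorphism $\phi_A$ and the naturality squares of $\lambda$.
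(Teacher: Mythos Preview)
Your proposal is correct in outline, but takes a different route from the paper. The paper's proof is organised around two preliminary observations: first, that the isomorphisms $\phi_A$ assemble into a comonad morphism $(j,\phi) \colon (\ToAlg, Q_0) \to (\TpAlg, \fp\up)$, which makes the forward direction immediate; and second, that for each $A$ the induced functor $j_A \colon \ToAlg/_{\mathrm{ff}}A \to \TpAlg/jA$ is fully faithful (this is the preceding Lemma). The converse direction then reduces to a single application of that lemma to lift the coalgebra map $\phi^{-1}\circ g \colon jA \to jQA$ over $j(p_A)$ to a strict $T$-algebra map $a \colon A \to QA$ over $p_A$, and a second application to verify coassociativity.

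Your argument bypasses this lemma and instead performs the lift directly in $\f C$ via orthogonality, promoting the resulting $\f C$-map $\bar a$ to a strict algebra map by a further orthogonality argument at $TA$. This is valid---indeed, it is essentially the content of the paper's lemma unpacked in the specific case at hand---but it is less modular. In particular, the step you flag as the ``principal obstacle'' (verifying $\bar a \circ \alpha = \beta \circ T\bar a$ after precomposition with $\lambda_{TA}$) requires not only the $T_d$-algebra-map property of $s$ and naturality of $\lambda$, but also compatibility of $\phi_A$ with the algebra structures, which ultimately rests on $\rho$ being a comonad morphism; you have not made this explicit. The paper's approach trades these calculations for a clean abstract statement about $j_A$, which then also handles the coassociativity check uniformly.
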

\begin{proof}
We first observe that $j \colon \ToAlg \to \TpAlg$ inverts objective morphisms. 
Indeed, if $f$ is objective in $\ToAlg$, then $Uf$
is objective in $\f C_0$, whence $OUf$ is invertible in $\Cp$. But $OUf = U_d jf$ and $U_d$ is conservative, and so $jf$ is invertible in $\TpAlg$ as claimed.

Now let $Q_0$ denote the comonad on $\ToAlg$ underlying the $2$-comonad $Q$. By Lemmas~\ref{lem:rhoobj} and~\ref{cor:comonadmap}, we have a  comonad morphism $\rho_0 \colon \fo \uo \to Q_0$ which is pointwise objective; since $j$ inverts objectives, we obtain from this a natural isomorphism $j \rho_0 \colon j \fo \uo \to j Q_0$. We also have a comonad morphism $\fo \lambda \uo \colon \fo DO \uo \to \fo \uo$, which is again pointwise objective, since $\lambda$ is so and $\fo$ preserves objective morphisms, and thus we obtain a natural isomorphism $j \fo \lambda \uo \colon j\fo DO \uo \to j\fo \uo$. Now by composing inverses we obtain a natural isomorphism
\[
  \phi := jQ_0 \xrightarrow{\ \ \ (j\rho_0)^{-1}\ \ \ } j \fo \uo \xrightarrow{\ \ \ (j \fo \lambda \uo)^{-1}\ \ \ }
j\fo DO \uo = \fp \up j\]
which may be shown to equip $j$ with the structure of a comonad morphism $(\ToAlg, Q_0) \to (\TpAlg, \fp \up)$. Thus if $A$ is a pie algebra---hence admitting $Q$-coalgebra structure---then $jA$ is a $\fp \up$-coalgebra. Conversely, suppose that $jA$ is a $\fp \up$-coalgebra, with structure map $g \colon jA \to \fp \up jA$. Then the left-hand diagram commutes in:
\begin{equation*}
  \cd[@!C]{jA \ar[dr]_{j(1)} \ar[r]^-{g} & \fp \up j A \ar[d]|{\epsilon_{jA}} \ar[r]^-{\phi^{-1}} & jQA \ar[dl]^{j(p_A)} \\ &
  jA}
  \qquad \quad
  \cd[@!C]{A \ar[dr]_{1} \ar[rr]^-{a} & & QA\rlap{ .} \ar[dl]^{p_A} \\ &
  A}
\end{equation*}
Since both $1 \colon A \to A$ and $p_A \colon QA \to A$ are fully faithful $T$-algebra maps, we have, by the preceding lemma, a $T$-algebra morphism $a \colon A \to QA$ as on the right with $j(a) = \phi^{-1} \circ g$. We will show that this $a$ satisfies the comultiplication axiom, so making $A$ into a $Q$-coalgebra and hence a pie algebra. So observe that both triangles in the diagram:
\begin{equation*}
  \cd[@!C]{A \ar[dr]_{1} \ar@<3.5pt>[rr]^-{\Delta_A \circ a} \ar@<-3.5pt>[rr]_-{Qa \circ a} & & QQA \ar[dl]^{pp_A} \\ &
  A}
\end{equation*}
are commutative; moreover, both diagonal maps are fully faithful and so to show that the two horizontal maps are equal, as we must, it is enough by the preceding lemma to do so after applying $j$: which follows by an entirely straightforward calculation.
\end{proof}
Finally, we obtain:
\begin{proof}[Proof of Theorem~\ref{thm:charthm}]
Comparing this theorem's statement with that of Proposition~\ref{prop:pieproj}, we see that the gap between the two is the gap between the free $\Tp$-algebras and the $\fp \up$-coalgebras. This gap is certainly closed if the adjunction $\fp \dashv \up$ is comonadic as well as monadic, but this is more than is needed. A necessary and sufficient condition is that for each coalgebra $a \colon A \to \fp \up A$, the parallel pair $(\eta_{\up A}, \up a) \colon \up A \rightrightarrows \up \fp\up A$ in $\Cp$ should admit an equaliser which is preserved by $\fp$; for then it follows that the equaliser map $E \rightarrowtail \up A$ exhibits $A$ as free on $E$. This condition will certainly be satisfied if the category $\Cp$ has, and $\fp$ preserves, all coreflexive equalisers; but since $\up$ creates limits, this is equally to ask, as in the statement of this theorem, that $\Cp$ have and $\Tp$ preserve coreflexive equalisers.\end{proof}

\section{Examples of pie algebras}\label{sec:exs}
We now apply the results developed in the previous section to the analysis of the pie algebras for some $2$-monads of interest. In each of our examples, we shall verify the conditions of Theorem~\ref{thm:charthm} and use it to give an explicit description of the class of pie algebras.

\subsection{Categories with structure}\label{sec:catexs}
As we commented in the introduction, many natural examples of $2$-monads reside on the $2$-category $\Cat$, their algebras being certain kinds of structured category. We have already seen that $\Cat$ has enough discretes and (objective, fully faithful) factorisations, and for any given $2$-monad on $\Cat$ it is relatively straightforward to verify whether the remaining  hypotheses of Theorem~\ref{thm:charthm} are satisfied. We consider in detail one example: the $2$-monad $T$ for strict monoidal categories.

First observe that if $F \colon \f A \to \f C$ is a strict monoidal functor between strict monoidal categories, and $F = GH$ a (bijective on objects, fully faithful) factorisation of its underlying functor, then the interposing category $\f B$ admits a strict monoidal structure with respect to which both $G$ and $H$ are strict monoidal; the reason that this is possible is that the arities required to describe strict monoidal structure, the discrete categories $0,1,2$ and $3$, are projectively discrete. It therefore follows from Proposition~\ref{prop:whenpresbij} that (objective, fully faithful) factorisations lift to $\TAlgs$.

Now consider the induced monad $\Tp$ on $\Cat_d = \Set$. This sends a set $X$ to the set of objects of the free strict monoidal category on $X$; it is thus the monoid monad on $\Set$, and the comparison functor $j \colon \ToAlg \to \TpAlg$ sends a strict monoidal category to its underlying monoid of objects. Finally, because the monoid monad $\sum_{n \in \mathbb N} (\thg)^n$ is a coproduct of representable functors, it preserves all connected limits, and so in particular coreflexive equalisers. We therefore conclude from Theorem~\ref{thm:charthm} that:

\begin{Proposition}
  A strict monoidal category is pie just when its underlying monoid of objects is free; that is, just when it is a (many-sorted) PRO in the sense of~\cite{Mac-Lane1965Categorical}.
\end{Proposition}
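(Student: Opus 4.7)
The plan is to invoke Theorem~\ref{thm:charthm} applied to the 2-monad $T$ for strict monoidal categories on $\Cat$; the proposition then reduces to a verification of the four hypotheses of that theorem together with an unpacking of the resulting criterion.

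All four hypotheses are essentially already in hand from the paragraphs immediately preceding the proposition. First, $\Cat$ has enough discretes, with $\Cat_d = \Set$ and the counit $\lambda_{\f A}$ given by the inclusion of $\mathrm{ob}\,\f A$ as a discrete subcategory. Second, (objective, fully faithful) factorisations lift to $\TAlgs$: this was verified just above by noting that the arities $0, 1, 2, 3$ needed to express strict monoidal structure are projectively discrete, so that Proposition~\ref{prop:whenpresbij} applies. Third, $\Cp = \Set$ trivially has all limits, including coreflexive equalisers. Fourth, the induced monad $\Tp = \sum_{n \in \mathbb N}(\thg)^n$ is the free-monoid monad; being a coproduct of representable functors, it preserves all connected limits, and in particular coreflexive equalisers.

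Theorem~\ref{thm:charthm} then tells us that a strict monoidal category $A$ is pie precisely when $jA$ is a free $\Tp$-algebra. Since $\Tp$ is the free-monoid monad and $j$ sends a strict monoidal category to its underlying monoid of objects, this is exactly the condition that the monoid of objects be a free monoid. To complete the identification with PROs, I would recall from~\cite{Mac-Lane1965Categorical} that a many-sorted PRO on a set $S$ of sorts is by definition a strict monoidal category whose monoid of objects is the free monoid $S^{\ast}$; the two conditions therefore coincide.

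No serious obstacle arises: all the real content has been packaged into Theorem~\ref{thm:charthm} and into the preparatory verifications given in the paragraphs just above the proposition, so the proof is essentially a matter of citing these and reading off the conclusion.
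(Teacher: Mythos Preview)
Your proposal is correct and follows essentially the same approach as the paper: you invoke Theorem~\ref{thm:charthm} after verifying its hypotheses exactly as the paper does in the paragraphs preceding the proposition, and then read off that $jA$ being free means the underlying monoid of objects is free. The only addition is your explicit identification with PROs, which the paper simply asserts in the statement.
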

Arguments of an entirely analogous form characterise the pie algebras for many other $2$-monads on $\Cat$; we list a few below as a representative sample.
\begin{itemize}
\item \emph{Symmetric strict monoidal categories}: (objective, fully faithful) factorisations again lift to $\TAlgs$, and the induced monad $\Tp$ on $\Set$ is again the monoid monad. Thus pie algebras are symmetric strict monoidal categories whose underlying monoid of objects is free; in other words, they are (many-sorted) PROPs in the sense of~\cite{Mac-Lane1965Categorical}.\vskip0.5\baselineskip
\item \emph{Categories with strictly associative and unital finite products}: arguing exactly as in the preceding two examples, we conclude that in this case the pie algebras are (many-sorted) Lawvere theories.\vskip0.5\baselineskip
\item \emph{(Symmetric) monoidal categories}: this time the induced monad on $\Set$ is the monad for \emph{pointed magmas}: these being sets equipped with a binary and a nullary operation, satisfying no laws. The underlying functor of this monad, like that of any monad free on a signature, is a coproduct of representables, and so preserves coreflexive equalisers as before; thus by Theorem~\ref{thm:charthm}, a (symmetric) monoidal category is pie just when its underlying pointed magma of objects is free.\vskip0.5\baselineskip
\end{itemize}

This last example was particularly easy to analyse by virtue of the induced monad $T_d$'s being free on a signature. This property is shared by many other $2$-monads on $\Cat$, including those whose algebras are categories with finite products, or distributive categories, or categories equipped with a monad, or categories with finite products and monoidal structure, or linearly distributive categories, and so on.  We will consider such $2$-monads in more detail in Section~\ref{sec:sf2monad} below; in particular, Proposition~\ref{prop:stronglypiepiealgs} shows that the above characterisation of the pie monoidal categories is equally valid for the algebras of any such $2$-monad.

\subsection{Weights}Given some small $2$-category $\f J$, we may consider the adjunction $F \colon [\mathrm{ob}\ \f J, \Cat] \leftrightarrows [\f J, \Cat] \colon U$ given by restriction and left Kan extension along the inclusion of objects $\mathrm{ob}\ \f J \to \f J$. This adjunction is strictly $2$-monadic, so allowing us to identify $[\f J, \Cat]$ with the $2$-category of algebras for the induced $2$-monad $T = UF$. As in the introduction, objects of $[\f J, \Cat]$ are to be thought of as weights for limits or colimits; and our goal is to show that a weight $W$ is pie as a $T$-algebra just when it determines a \emph{pie limit}---that is, one constructible from products, inserters and equifiers. The pie limits were the primary object of study of~\cite{Power1991A-characterization}, and Corollary 3.3 of that paper shows that a weight $W \in [\f J, \Cat]$ determines a pie limit just when the presheaf
\begin{equation*}
\f J_0 \xrightarrow{W_0} \Cat_0 \xrightarrow{\textrm{ob}} \Set
\end{equation*}
is a coproduct of representables. Our goal, then, is to show that a weight $W$ has this property just when it is pie as a $T$-algebra. As mentioned before, this was done in~\cite[Theorem 6.12]{Lack2011Enhanced}; however, it is quite straightforward to give an alternative proof using our Theorem~\ref{thm:charthm}. 

Observe first that $[\mathrm{ob}\ \f J, \Cat]$ has (objective, fully faithful) factorisations which lift to $\TAlgs = [\f J, \Cat]$, as in both $2$-categories a morphism is objective or fully faithful just when it is pointwise so. As for the induced monad $\Tp$ on $[\mathrm{ob}\ \f J, \Cat]_{d}=[\mathrm{ob}\ \f J, \Set]$ we may calculate this to be given by the formula
\begin{equation*}
  (\Tp X)(j) = \textstyle\sum_{k \in \f J}\, \f J_0(k, j) \times Xk\rlap{ .}
\end{equation*}
This shows $\Tp$ to be pointwise a coproduct of representable functors; as such it preserves connected limits and in particular coreflexive equalisers. It furthermore shows that $\TpAlg \cong [\f J_0, \Set]$, and now under this identification, the comparison functor $\ToAlg \to \TpAlg$ becomes the functor $[\f J, \Cat]_0 \to [\f J_0, \Set]$ sending $W \colon \f J \to \Cat$ to the  presheaf $\mathrm{ob} \circ W_0$.
We therefore conclude from Theorem~\ref{thm:charthm} that:
\begin{Proposition}\label{prop:pieweights}
A weight $W \colon \f J \to \Cat$ is a pie algebra for the weight $2$-monad on $[\mathrm{ob}\ \f J, \Cat]$ just when its underlying presheaf $\mathrm{ob} \circ W_0 \colon \f J_0 \to \Set$ is free, i.e., a coproduct of representables; that is, just when $W$ determines a pie limit.
\end{Proposition}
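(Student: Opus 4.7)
The plan is to apply Theorem~\ref{thm:charthm} to the weight $2$-monad $T$ on $\f C = [\mathrm{ob}\ \f J, \Cat]$ and then match its conclusion with Power's characterisation. Since $\mathrm{ob}\ \f J$ is discrete, $\f C$ is a product of copies of $\Cat$, so it inherits enough discretes and pointwise (objective, fully faithful) factorisations from $\Cat$; moreover, these factorisations are also pointwise in $\TAlgs = [\f J, \Cat]$, whence they lift. This takes care of the structural hypotheses.

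Next I would compute the induced monad $\Tp$ on $\f C_d = [\mathrm{ob}\ \f J, \Set]$. Since $T = UF$ is given by left Kan extension along $\mathrm{ob}\ \f J \hookrightarrow \f J$ followed by restriction, its value on $X$ at $j$ is the coend $\int^{k} \f J(k,j) \times Xk \cong \sum_{k \in \f J} \f J(k,j) \times Xk$ (the coproduct because $\mathrm{ob}\ \f J$ is discrete). Applying $O = \pi_0$ pointwise yields the stated formula
\begin{equation*}
(\Tp X)(j) = \textstyle\sum_{k \in \f J}\, \f J_0(k,j)\times Xk\rlap{ ,}
\end{equation*}
which is pointwise a coproduct of representables and hence preserves coreflexive equalisers, as Theorem~\ref{thm:charthm} requires.

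The third step is to identify the comparison functor. The formula for $\Tp$ shows it is precisely the monad on $[\mathrm{ob}\ \f J, \Set]$ induced by restriction and left Kan extension along $\mathrm{ob}\ \f J \hookrightarrow \f J_0$; this adjunction is (strictly) monadic, so $\TpAlg \cong [\f J_0, \Set]$. Under this identification, the comparison functor $j\colon\ToAlg \to \TpAlg$ becomes $[\f J,\Cat]_0 \to [\f J_0,\Set]$, $W \mapsto \mathrm{ob}\circ W_0$; this can be checked either by tracing through the explicit formula for $j$ given after~\eqref{eq:situation} or, more conceptually, by noting that $j$ is uniquely determined by being a morphism of adjunctions over the discrete reflection $O$.

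Putting this together, Theorem~\ref{thm:charthm} tells us that $W$ is pie exactly when $\mathrm{ob}\circ W_0$ is a free $\Tp$-algebra; and since $\TpAlg \cong [\f J_0,\Set]$ with free algebras the coproducts of representables, this is the condition that $\mathrm{ob}\circ W_0$ be a coproduct of representables. Finally, I would cite~\cite[Corollary 3.3]{Power1991A-characterization} to conclude that this is precisely the condition that $W$ determine a pie limit. The only real work is the computation of $\Tp$ and the verification that $j$ takes the claimed form; everything else is an appeal to general results established earlier in the section.
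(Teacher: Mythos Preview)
Your approach is essentially identical to the paper's: verify the hypotheses of Theorem~\ref{thm:charthm}, compute $\Tp$ explicitly, identify $\TpAlg$ with $[\f J_0,\Set]$ and the comparison functor with $W \mapsto \mathrm{ob}\circ W_0$, and then invoke Power--Robinson. The structure and level of detail match the paper almost exactly.

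There is one slip worth correcting. You write ``applying $O = \pi_0$ pointwise'', but $O$ is not $\pi_0$: in the coreflection~\eqref{eq:discobj} the counit $\lambda_X \colon DOX \to X$ is required to be objective, and for $\Cat$ the objective maps are the bijective-on-objects functors, so $O$ must be the functor $\mathrm{ob}$ (whose counit is the identity-on-objects inclusion of the discrete subcategory), not $\pi_0$. Indeed, applying $\pi_0$ to $\sum_k \f J(k,j)\times Xk$ would yield $\sum_k \pi_0\,\f J(k,j)\times Xk$, which is not the formula you state; applying $\mathrm{ob}$ gives the correct $\sum_k \f J_0(k,j)\times Xk$. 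The formula you wrote down is right, only the parenthetical identification of $O$ is wrong, so the rest of the argument is unaffected.
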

Let us remark that our proof of the above proposition was not self-contained but required an application of Corollary 3.3 of~\cite{Power1991A-characterization}. In fact, we could equally have appealed to our Proposition~\ref{thm:1in2} below, which can be seen as a generalisation of Power and Robinson's result from weights to more general algebras.

\subsection{$2$-categories and related structures}\label{sec:2catsrelated}
For our third example, we consider $2$-categories and related structures as algebras for a $2$-monad, and determine which of these algebras are pie. We begin with the case of $2$-categories.

By a \emph{category-enriched graph}, we mean a diagram $A_1 \rightrightarrows A_0$ of categories with $A_0$ discrete. We write $\f P$ for the parallel pair category $\bullet \rightrightarrows \bullet$, and write $\CatGph$ for the full and locally full sub-$2$-category of $[\f P, \Cat]$ spanned by the category-enriched graphs. On the mere category $[\f P, \Cat]_0$, we have the monad whose algebras are categories internal to $\Cat$---thus double categories---and  it is easy to enrich this to a $2$-monad $S$ on $[\f P, \Cat]$, and then to restrict this to a $2$-monad $T$ on $\CatGph$.

The algebras for $T$ are $2$-categories; the strict algebra morphisms are $2$-functors, and the pseudomorphisms are pseudofunctors. As for the algebra $2$-cells, these are the \emph{icons} of~\cite{Lack20082-nerves,Lack2010Icons}; recall that an icon between two $2$-functors $G, H \colon A \rightrightarrows B$ is an oplax natural transformation $\alpha \colon G \Rightarrow H$ whose $1$-cell components are all identities. We write $\Icon$ for the $2$-category of $2$-categories, $2$-functors and icons; it is thus $\TAlgs$ for the above-defined $T$, and our task is to identify the pie $T$-algebras.

It turns out that the results of Theorem~\ref{thm:charthm} are not directly applicable in this situation, as although $\CatGph$ has (objective, fully faithful) factorisations, these do not lift to $\Icon$. We must therefore follow a more roundabout route to our characterisation. We will reduce the problem to that of characterising the pie $2$-categories with a fixed object set $X$, seen as algebras for a $2$-monad on category-enriched graphs with object set $X$; the point being that in this situation, (objective, fully faithful) factorisations \emph{do} lift, with the other hypotheses of Theorem~\ref{thm:charthm} being similarly satisfied.

Thus given some set $X$, we let $\Icon_{X}$ denote the locally full sub-$2$-category of \Icon spanned by the $2$-categories with object set $X$ and the identity on objects 2-functors between them. There is a corresponding sub-$2$-category $\CatGph_X$ of $\CatGph$, and the free-forgetful adjunction $F \colon \CatGph \leftrightarrows \Icon \colon U$ restricts to an adjunction $F_X \colon \CatGph_X \leftrightarrows \Icon_X \colon U_X$ which is again strictly $2$-monadic. If we call the induced $2$-monad $T_X$, then our claim is that a $2$-category $A$ with object set $X$ is pie as a $T$-algebra just when it is pie as a $T_X$-algebra.  This will follow from:

\begin{Lemma}\label{lem:qicon}
The comonad $Q$ on $\Icon$ associated with $T$ may be so chosen as to restrict to $\Icon_X$ for each $X$, there yielding the comonad $Q_X$ associated with $T_X$. 
\end{Lemma}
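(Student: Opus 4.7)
The plan is to invoke the uniqueness of the pseudomorphism classifier from Lemma~\ref{cor:comonadmap}. Specifically, I would show that $Q$ may be chosen so as to preserve object sets on the nose, whereupon its restriction to $\Icon_X$ becomes a comonad satisfying the characterisation in Lemma~\ref{cor:comonadmap} applied to the $2$-monad $T_X$; the uniqueness clause will then identify the restriction with $Q_X$ as required.

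The key point is that $QA$ may be chosen with $\mathrm{ob}(QA) = \mathrm{ob}(A)$. Since any pseudofunctor into a discrete $2$-category $\mathrm{disc}(Y)$ amounts to a function on objects, we have for any $2$-category $A$ a natural isomorphism
\[ \Icon(QA, \mathrm{disc}(Y)) \;\cong\; \TAlg(A, \mathrm{disc}(Y)) \;\cong\; \Set(\mathrm{ob}(A), Y), \]
whilst directly $\Icon(QA, \mathrm{disc}(Y)) \cong \Set(\mathrm{ob}(QA), Y)$; naturality in $Y$ and Yoneda thus give $\mathrm{ob}(QA) \cong \mathrm{ob}(A)$, which we may take to be the identity. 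With this choice, a straightforward tracing shows that the counit $p_A$, the pseudo-unit $q_A$, the comultiplication $\Delta_A$ and their transpose $\rho_A$ are each identity on objects. Hence $Q$ restricts to a $2$-functor $\Icon_X \to \Icon_X$, and its comonad structure together with the objective comonad morphism $\rho$ from $FU$ restrict accordingly.

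It remains to check that this restricted comonad on $\Icon_X$ meets the hypotheses of Lemma~\ref{cor:comonadmap} with respect to the $2$-monad $T_X$. Its counit is pointwise fully faithful in $\Icon_X$ because $p_A$ is fully faithful in $\Icon$ and $\Icon_X \hookrightarrow \Icon$ is locally full; and the restricted $\rho$ is pointwise objective in $\Icon_X$ by an argument parallel to the proof of Lemma~\ref{lem:rhoobj}, carried out now with identity-on-objects pseudofunctors in place of arbitrary ones. The main obstacle is this final verification: unlike fully faithfulness, objectivity of $\rho_A$ in $\Icon$ need not pass automatically to $\Icon_X$, so the universal property of $\rho_A$ must be re-established in the object-fixed setting—though this is essentially routine because fully faithful morphisms in $\Icon_X$ are carried by $U_X$ to fully faithfuls in $\CatGph_X$, and the lifting argument of Lemma~\ref{lem:rhoobj} then works verbatim.
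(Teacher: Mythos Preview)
Your overall strategy is reasonable and close in spirit to the paper's, but there is a concrete error in the representability step. A pseudofunctor $F \colon A \to \mathrm{disc}(Y)$ must send each $1$-cell $f \colon a \to a'$ to a $1$-cell $Fa \to Fa'$ in $\mathrm{disc}(Y)$; since the only $1$-cells there are identities, this forces $Fa = Fa'$. Thus pseudofunctors into $\mathrm{disc}(Y)$ correspond to functions $\pi_0(A) \to Y$, not to functions $\mathrm{ob}(A) \to Y$, and your Yoneda argument yields only $\pi_0(QA) \cong \pi_0(A)$. The fix is to replace $\mathrm{disc}(Y)$ by the \emph{chaotic} (indiscrete) $2$-category on $Y$, whose hom-categories are all terminal: both $2$-functors and pseudofunctors into it are freely and uniquely determined by a function on objects, and with this change your argument does give $\mathrm{ob}(QA) \cong \mathrm{ob}(A)$.

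Your worry about objectivity is, on the other hand, unnecessary. A morphism of $\Icon_X$ is fully faithful there just when it is locally fully faithful; being also identity-on-objects, it is then injective on objects and locally fully faithful, hence fully faithful in $\Icon$. So the fully faithfuls of $\Icon_X$ are among those of $\Icon$, and a map of $\Icon_X$ that is objective in $\Icon$ is automatically objective in $\Icon_X$: given a lifting problem in $\Icon_X$, the unique diagonal filler produced in $\Icon$ is forced to be identity-on-objects because its composite with the identity-on-objects fully faithful is. No separate re-running of Lemma~\ref{lem:rhoobj} is required.

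The paper's own argument bypasses the representability step. It observes that the counit $\epsilon_A \colon FUA \to A$ lies in $\Icon_X$ and that its (objective, fully faithful) factorisation there---which produces $Q_X A$---is carried by the inclusion $\Icon_X \hookrightarrow \Icon$, which preserves both classes, to an (objective, fully faithful) factorisation in $\Icon$. Uniqueness of such factorisations then gives $QA \cong Q_X A$ directly, after which the comonad structure is seen to restrict exactly as you describe. This is a little slicker, as one never has to argue separately about object sets, but once your Yoneda argument is repaired the two approaches are essentially equivalent.
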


For indeed, if the above lemma holds then certainly each $Q_{X}$-coalgebra yields a $Q$-coalgebra structure.  Conversely, if $A \in \Icon_{X}$ admits a $Q$-coalgebra structure $a \colon A \to QA$ then $p_{A}$'s being the identity on objects ensures that so too is $a$ which, as a morphism of $\Icon_{X}$, thereby exhibits $A$ as a $Q_{X}$-coalgebra.

In proving this lemma, we will need a characterisation of the fully faithful maps both in $\Icon$ and $\Icon_{X}$, and some understanding of the objectives. With regard to $\Icon$, note that both the forgetful $2$-functor $\Icon \to \CatGph$ and the inclusion $\CatGph \to [\f P, \Cat]$ are limit-preserving and conservative, whence a $2$-functor $G \colon A \to B$ is fully faithful in $\Icon$ just when its underlying morphism
\begin{equation}\label{eq:catgphmor}
\cd{
  A_1 \ar@<-4pt>[d] \ar@<4pt>[d] \ar[r]^{G_1} &
  B_1 \ar@<-4pt>[d] \ar@<4pt>[d] \\
  A_0 \ar[r]_{G_0} & B_0
 }
\end{equation}
is so in 
$[\f P, \Cat]$. This happens just when $G_1$ is a fully faithful functor, and $G_0$ an injective function, so that the fully faithful maps in $\Icon$ are the $2$-functors which are injective on objects and locally fully faithful.

In the case of $\Icon_{X}$ we have the limit-preserving and conservative forgetful $2$-functor $\Icon_{X} \to \CatGph_{X}$, so that a morphism $G \colon A \to B$ is fully faithful in $\Icon_X$ just when its underlying morphism is so in $\CatGph_{X}$.  This latter 2-category is equivalent to $[X \times X,\Cat]$, and from our understanding of the fully faithful morphisms therein, we deduce that  $G$ is fully faithful in $\Icon_{X}$ just when its underlying morphism~\eqref{eq:catgphmor} has $G_{1}$ fully faithful; which is to say that $G$ is a locally fully faithful $2$-functor.

Now $\Icon_X$ admits a (bijective on $1$-cells, locally fully faithful) factorisation system; since the right class comprises the fully faithfuls in $\Icon_X$, it follows that the left class comprises the objectives. Considering the inclusion $\Icon_X \to \Icon$, it is immediate that it preserves fully faithfuls; but it also preserves objectives, since in $\Icon$, a $2$-functor which is the identity on objects and $1$-cells is orthogonal to any locally fully faithful $2$-functor, and so certainly objective. Let us take this opportunity also to remark that the objectives in $\CatGph_X$ are again the morphisms bijective on $1$-cells, so that (objective, fully faithful) factorisations in $\Icon_X$ are in fact lifted from $\CatGph_X$.

\begin{proof}[Proof of Lemma~\ref{lem:qicon}]
If $A$ is a $2$-category with object set $X$, then by Lemma~\ref{cor:comonadmap}, we have an (objective, fully faithful) factorisation
\begin{equation}\label{eq:factorise1}
FUA \xrightarrow{\epsilon_A} A \qquad = \qquad FUA \xrightarrow{\rho_A} QA \xrightarrow{p_A} A\rlap{ ,}
\end{equation}
where $\epsilon$ is the counit of the comonad $FU$ on $\Icon$. Since $\epsilon_A \colon FUA \to A$   is the identity on objects, it may be viewed as a map of $\Icon_X$; seen as such, it is in fact the counit component at $A$ of the comonad $F_XU_X$ on $\Icon_X$, and we therefore have in $\Icon_X$ an (objective, fully faithful) factorisation 
\begin{equation}\label{eq:factorise2}
FUA \xrightarrow{\epsilon_A} A \qquad = \qquad FUA \xrightarrow{(\rho_X)_A} Q_XA \xrightarrow{(p_X)_A} A\rlap{ .}
\end{equation}
The above discussion shows this also to be an (objective, fully faithful) factorisation in $\Icon$, and we conclude that $QA$ is uniquely isomorphic to $Q_XA$ for each object $A \in \Icon_X$. Transporting along these unique isomorphisms, we may take it that in fact $QA = Q_XA$, $\rho_A = (\rho_X)_A$ and $p_A = (p_X)_A$ for each $A \in \Icon_X$.

On doing so, we have the endofunctor $Q$ and the counit $p \colon Q \to 1$ both the identity on objects; because $pQ \circ \Delta = 1$, it follows from this that the comultiplication $\Delta \colon Q \to QQ$ is also the identity on objects, whence the comonad $Q$ restricts from $\Icon$ to $\Icon_X$ for each set $X$. But by construction, the factorisation~\eqref{eq:factorise1} restricts to give the factorisation~\eqref{eq:factorise2}, so that by Lemma~\ref{cor:comonadmap}, the restricted comonad is indeed $Q_{X}$.  
 \end{proof}

Using this lemma, our problem is thus reduced to that of characterising the pie $T_X$-algebras.  As previously noted the (objective, fully faithful) factorisations of $\CatGph$ lift to $\Icon_{X} \cong \textnormal{T}_X\textnormal{-Alg}_\mathrm{s}$ and since $\CatGph_X$ is equivalent to $[X \times X, \Cat]$ it has enough discretes.  We may take $(\CatGph_X)_d$ to be $[X \times X, \Set]$: now the induced monad $(T_X)_d$ is the monad for categories with object set $X$, and the comparison functor $j \colon (T_X)_0\text{-Alg} \to (T_X)_d\text{-Alg}$ sends a $2$-category with object set $X$ to its underlying ordinary category. To see that $(T_X)_d$ preserves coreflexive equalisers, observe that it is given by 
\[
  (T_X)_d(A)(x,y) = \sum_{x = x_0, \dots, x_n = y\in X} A(x_0, x_1) \times \dots \times A(x_{n-1}, x_n)\rlap{ .}
\]
It is thus pointwise a coproduct of representables and so preserves connected limits, in particular coreflexive equalisers. We conclude from Theorem~\ref{thm:charthm} that:
\begin{Proposition}
A $2$-category is pie just when its underlying ordinary category is free on a graph.
\end{Proposition}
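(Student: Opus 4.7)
The plan is to combine Lemma~\ref{lem:qicon} with Theorem~\ref{thm:charthm}. By the lemma, a $2$-category $A$ with object set $X$ is pie as a $T$-algebra precisely when it is pie as a $T_X$-algebra; so it suffices to characterise the pie $T_X$-algebras for each fixed set $X$ separately.

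I would then apply Theorem~\ref{thm:charthm} to the $2$-monad $T_X$ on $\CatGph_X$. All required hypotheses have in fact been established in the discussion immediately preceding this proposition: the (objective, fully faithful) factorisations of $\CatGph_X$ lift to $\Icon_X \cong (T_X)\textnormal{-Alg}_\mathrm{s}$; as $\CatGph_X$ is equivalent to $[X \times X, \Cat]$, it has enough discretes with $(\CatGph_X)_d = [X \times X, \Set]$; and the induced monad $(T_X)_d$ preserves coreflexive equalisers, being pointwise a coproduct of representables by the displayed formula for $(T_X)_d(A)(x,y)$.

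It then remains to unwind what the theorem's conclusion asserts in this situation. The induced monad $(T_X)_d$ on $[X \times X, \Set]$ is the monad whose algebras are small categories with object set $X$, and whose free algebras are the categories free on a graph with vertex set $X$; correspondingly, the comparison functor $j$ sends a $2$-category with object set $X$ to its underlying ordinary category. Theorem~\ref{thm:charthm} therefore says that $A$ is a pie $T_X$-algebra just when its underlying ordinary category is free on a graph, and combining this with the first step yields the stated characterisation.

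I do not anticipate any real obstacle; essentially all of the technical groundwork has been laid in Lemma~\ref{lem:qicon} and the surrounding discussion of factorisations and objectives in $\Icon_X$, so what remains is only to assemble the pieces and to verify that the abstract condition ``$jA$ is free as a $(T_X)_d$-algebra'' unpacks concretely as ``the underlying category is free on a graph''.
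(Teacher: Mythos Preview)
Your proposal is correct and follows essentially the same approach as the paper: reduce via Lemma~\ref{lem:qicon} to characterising the pie $T_X$-algebras, then apply Theorem~\ref{thm:charthm} using the hypotheses verified in the surrounding discussion, and finally unpack the conclusion as freeness of the underlying category on a graph. There is nothing to add.
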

The arguments given above may be generalised in two directions. The first is to consider bicategories in place of $2$-categories. In this case, we begin by considering the $2$-category of bicategories, strict homomorphisms and icons, seeing that this is again $2$-monadic over $\CatGph$, and now continue the argument as before. Our conclusion will differ in the nature of the underlying structure which is required to be free. As in~\cite{Lack2004A-Quillen}, we define a \emph{compositional graph} to be a directed graph $X_1 \rightrightarrows X_0$ equipped with identity and composition operations, subject to no axioms: thus a category is a compositional graph in which the unit and associativity laws are verified. The objects and $1$-cells of any bicategory form a compositional graph, the underlying compositional graph of the bicategory. Now our result is that:
\begin{Proposition}
A bicategory is pie if and only if its underlying compositional graph is free.
\end{Proposition}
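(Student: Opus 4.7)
The plan is to repeat the argument just given for $2$-categories, with the appropriate modifications for bicategories. As the paper has indicated, bicategories, strict homomorphisms and icons assemble into a $2$-category $\Icon^b$ that is $2$-monadic over $\CatGph$ via some $2$-monad $T^b$. As in the $2$-category case, (objective, fully faithful) factorisations will fail to lift from $\CatGph$ to $\Icon^b$, so I would restrict to a fixed object set $X$: letting $\Icon^b_X$ denote the locally full sub-$2$-category of $\Icon^b$ spanned by the bicategories with object set $X$ and the identity-on-objects strict homomorphisms between them, this is again $2$-monadic over $\CatGph_X$ via a $2$-monad $T^b_X$.

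Next I would prove the direct analog of Lemma~\ref{lem:qicon}, namely that the comonad $Q^b$ on $\Icon^b$ may be chosen to restrict to each $\Icon^b_X$, yielding there the comonad associated with $T^b_X$. The proof should go through essentially verbatim: its key ingredient is that the counit $\epsilon_A$ of $F^bU^b$ at any $A \in \Icon^b_X$ is identity on objects, which remains true because the free bicategory on a category-enriched graph preserves the object set. Consequently a bicategory is pie just when it is pie as a $T^b_X$-algebra for its object set $X$, and applying Theorem~\ref{thm:charthm} to $T^b_X$ will deliver the conclusion, provided one correctly identifies the induced monad $(T^b_X)_d$ on $[X \times X, \Set]$. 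The crucial observation is that whereas for $2$-categories $(T_X)_d$ was the monad for categories on object set $X$, here, because neither strict associativity nor strict unitality is imposed at the level of $1$-cells, $(T^b_X)_d$ is instead the monad for \emph{compositional graphs} on object set $X$; explicitly, $(T^b_X)_d(A)(x,y)$ is the sum, over planar trees with leaves labelled by either edges of $A$ or formal identities with consistent source/target data and with overall source $x$ and target $y$, of the corresponding products of edge-sets of $A$. Being pointwise a coproduct of representables, this preserves coreflexive equalisers, and Theorem~\ref{thm:charthm} then yields the stated characterisation.

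The main obstacle is likely to be the verification, via Proposition~\ref{prop:whenpresbij}, that $T^b_X$ preserves morphisms bijective on $1$-cells, so that (objective, fully faithful) factorisations really do lift from $\CatGph_X$ to $\Icon^b_X$. Unlike in the strict $2$-category setting, where this was immediate from the simple formula for the free category on a graph, the free bicategory construction is more intricate, with associator and unitor $2$-cells contributing non-trivial data to the hom-categories. Nonetheless the underlying set of $1$-cells of the free bicategory is computed pointwise on the input hom-categories as the coproduct indexed by planar trees described above, and this operation manifestly preserves bijections on $1$-cells; the remainder of the argument then goes through without incident.
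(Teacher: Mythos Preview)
Your proposal is correct and follows precisely the approach the paper sketches: restrict to a fixed object set, prove the analogue of Lemma~\ref{lem:qicon}, and apply Theorem~\ref{thm:charthm} after identifying the induced monad on $[X \times X, \Set]$ as that for compositional graphs rather than categories. Your identification of the key difference from the $2$-category case and of the main technical check---that $T^b_X$ preserves morphisms bijective on $1$-cells---is accurate and matches what the paper leaves implicit.
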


The second direction of generalisation concerns double categories. This generalisation is in fact  also a simplification, as in this case Theorem~\ref{thm:charthm} will be immediately applicable. Recall from above that we formed the $2$-category $\CatGph$ as a full sub-$2$-category of $[\f P, \Cat]$, and the $2$-monad $T$ for $2$-categories thereon as the restriction and corestriction of a $2$-monad $S$ on $[\f P, \Cat]$. This latter $2$-monad we constructed as a $2$-dimensional enrichment of the free category monad on $[\f P, \Cat]_0$; as such the strict $S$-algebras are categories internal to $\Cat$, that is, double categories. The corresponding strict and pseudo algebra morphisms are \emph{double functors} and \emph{pseudo double functors}, respectively, whilst the algebra $2$-cells are \emph{horizontal transformations}; our terminology throughout is that of~\cite{Grandis1999Limits}. 

The 2-category $[\f P,\Cat]$ on which $S$ resides has (objective, fully faithful) factorisations;
what distinguishes this case from that of $2$-categories is that these factorisations lift. The easiest way of seeing this is to observe that a strict double functor $G$, when seen as an internal functor in $\Cat$, admits a factorisation
\begin{equation*}
\cd{
  A_1 \ar@<-4pt>[d] \ar@<4pt>[d] \ar[r]^{G_1} &
  B_1 \ar@<-4pt>[d] \ar@<4pt>[d] \\
  A_0 \ar[r]_{G_0} & B_0
 } \qquad = \qquad 
\cd{
  A_1 \ar@<-4pt>[d] \ar@<4pt>[d] \ar[r]^{H_1} &
  C_1 \ar@<-4pt>[d] \ar@<4pt>[d] \ar[r]^{K_1} &
  B_1 \ar@<-4pt>[d] \ar@<4pt>[d] \\
  A_0 \ar[r]_{H_0} & 
  C_0 \ar[r]_{K_0} & B_0
 }\end{equation*}
into a pair of internal functors for which $H_0$ and $H_1$ are bijective on objects, and $K_0$ and $K_1$ are fully faithful; now by applying Proposition~\ref{prop:whenpresbij}, the claim follows. The 2-category $[\f P,\Cat]$ has enough discretes, with $[\f P,\Cat]_{d}=[\f P,\Set]$, the category of graphs; the induced monad $S_d$ on $[\f P,\Set]$ is the free category monad, and the comparison functor $j \colon \textnormal{S}_0\textnormal{-Alg} \to \textnormal{S}_d\textnormal{-Alg}$ assigns to each double category its underlying category of objects and vertical arrows.  Finally, as the free category monad is pointwise a coproduct of representables, it preserves connected limits and in particular coreflexive equalisers; thus we conclude from Theorem~\ref{thm:charthm} that:
\begin{Proposition}
A double category is pie if and only if its underlying vertical category is free on a graph.
\end{Proposition}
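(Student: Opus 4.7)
The plan is to apply Theorem~\ref{thm:charthm} directly to the $2$-monad $S$ on $\f C = [\f P, \Cat]$, using the groundwork already laid in the paragraph introducing $S$. Most of the hypotheses have already been verified or are immediate: $[\f P, \Cat]$ is complete and cocomplete; $S$ has rank, being a $2$-categorical enrichment of the finitary free category monad on $[\f P, \Cat]_0$; $[\f P, \Cat]$ has (objective, fully faithful) factorisations constructed pointwise; and these factorisations lift to $\TAlgs$ courtesy of the internal-functor factorisation displayed above together with Proposition~\ref{prop:whenpresbij}. Moreover $[\f P, \Cat]$ has enough discretes, with $[\f P, \Cat]_d = [\f P, \Set]$ the category of directed graphs.

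The next step is to identify the induced monad $S_d$ on $[\f P, \Set]$ and the comparison functor $j$. Unwinding the construction~\eqref{eq:situation}, which in this case composes the discrete/underlying adjunction $D \dashv O$ with the free/forgetful adjunction for $S$, one checks that $S_d$ sends a graph $X_1 \rightrightarrows X_0$ to the graph of paths in $X$, i.e., that $S_d$ is precisely the free category monad on graphs. Its algebras are categories, and the comparison $j \colon S_0\text{-Alg} \to S_d\text{-Alg}$ sends a double category $A_1 \rightrightarrows A_0$ to the category obtained by taking objects levelwise: this is precisely the underlying vertical category of $A$, whose objects are objects of the double category and whose morphisms are its vertical arrows.

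The remaining hypothesis of Theorem~\ref{thm:charthm} is that $[\f P, \Set]$ have, and $S_d$ preserve, coreflexive equalisers. The category $[\f P, \Set]$ is complete, and since the free category monad is pointwise a coproduct of representable set-valued functors, it preserves all connected limits, in particular coreflexive equalisers. Applying Theorem~\ref{thm:charthm}, a double category $A$ is pie if and only if $jA$ is a free $S_d$-algebra, which means exactly that its underlying vertical category is free on a graph.

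The main obstacle, such as it is, lies in the bookkeeping needed to identify $j$ and $S_d$ explicitly; once one writes out the free double category on a category-enriched graph and extracts the action on objects and vertical arrows, the identification of $S_d$ with the free category monad and of $j$ with the underlying vertical category functor is routine, and the proof is complete.
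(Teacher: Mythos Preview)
Your proposal is correct and follows essentially the same approach as the paper: you apply Theorem~\ref{thm:charthm} directly to $S$ on $[\f P,\Cat]$, verifying the hypotheses (factorisations lift via Proposition~\ref{prop:whenpresbij}, enough discretes with $[\f P,\Cat]_d = [\f P,\Set]$, $S_d$ is the free category monad preserving coreflexive equalisers as a pointwise coproduct of representables), and identify $j$ as the underlying vertical category functor. The paper's argument is identical in structure and content.
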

We may combine the two generalisations given above to obtain a fourth characterisation result for the \emph{pseudo double categories} of~\cite{Grandis1999Limits}; it states that a pseudo double category is pie just when its underlying compositional graph of objects and vertical arrrows is free.

\section{Characterisation theorems for algebras and weights}\label{sec:char}
We now turn to the second main objective of the paper, that of clarifying the relationship between the semiflexible, flexible and pie algebras for a $2$-monad; we do so under our standing assumptions that the $2$-monad in question should have rank and should reside on a complete and cocomplete $2$-category. We prove three theorems, each describing a property which, when given in its weakest form, characterises completely the semiflexible algebras; when strengthened slightly, characterises the flexible algebras; and when strengthened further still, characterises the pie algebras.

In fact, we pay special attention to the case of the semiflexible, flexible and pie weights, and so each of our characterisation theorems will have a second form dealing with these. So as to be able to give this second form, we  now establish some notational conventions concerning weighted limits. For any weight $W \in [\f J, \Cat]$ and any $2$-category $\f K$, we have a $2$-functor
\begin{align*}
   \mathrm{Cone}_W \colon \f K^\op \times [\f J, \f K] & \to \Cat \\
   (X, D) & \mapsto [\f J, \Cat](W,\, \f K(X, D\thg))\rlap{ ;}
\end{align*}
we call an object $\alpha \in \mathrm{Cone}_W(X,D)$ a \emph{$W$-weighted cone from $X$ to $D$}, and denote it by $ \alpha \colon X \dot{\rightarrow} D$. We write $\beta \circ \alpha$ and $\alpha \circ f$ for its postcomposition with a $2$-natural transformation $\beta \colon D \to D'$ and its precomposition with a map $f \colon X' \to X$. For some given $D \colon \f J \to \f K$, a \emph{limit of $D$ weighted by $W$} is a representation for the $2$-functor $\mathrm{Cone}_W(\thg, D) \colon \f K^\op \to \Cat$. Of course, if the $2$-category $\f K$ admits $W$-weighted limits for all $D \in [\f J, \f K]$, then on making a choice of such, we obtain a $2$-functor $\{W, \thg\} \colon [\f J, \f K] \to \f K$.

Given $W \in [\f J, \Cat]$ and a $2$-category $\f K$, we can also consider the $2$-functor
\begin{align*}
   \mathrm{PsCone}_W \colon \f K^\op \times \Ps(\f J, \f K) & \to \Cat \\
   (X, D) & \mapsto \Ps(\f J, \Cat)(W,\, \f K(X, D\thg))\rlap{ ,}
\end{align*}
whose elements $\alpha \in \mathrm{PsCone}_W(X,D)$ we call \emph{$W$-weighted pseudocones} $\alpha \colon X \dot{\rightsquigarrow} D$; now given some $D \colon \f J \to \f K$, a \emph{pseudolimit of $D$ weighted by $W$} is a representation for $\mathrm{PsCone}_W(\thg, D)$. If the $2$-category $\f K$ admits $W$-weighted pseudolimits for all $D \colon \f J \to \f K$, then on making a choice of such we obtain a $2$-functor $\{W, \thg\}_{\mathrm{ps}} \colon \Ps(\f J, \f K) \to \f K$.

\subsection{First characterisation}
As mentioned in the introduction, the semiflexible algebras are characterised by the property that  any algebra pseudomorphism out of one may be replaced by an isomorphic strict morphism; this was proven in~\cite[Theorem 7.2]{Blackwell1989Two-dimensional}. What we will now show is that the flexibles and pies may be further distinguished by the increasingly well-behaved manner in which this replacement can be done. The corresponding version of this result for weights concerns the manner in which pseudocones may be replaced by strict ones.

\begin{Theorem}\label{thm:char1}
A $T$-algebra $A$ is semiflexible, flexible, or pie, just when, respectively:
\begin{enumerate}[(a)]
\item There is a function assigning
    to each pseudomorphism $f \colon A \rightsquigarrow B$ a strict morphism $R(f)
    \colon A \to B$ which is isomorphic to $f$ in $\TAlg(A,B)$;\vskip0.5\baselineskip
\item Moreover, this $R$ may be chosen so that
    $R(f) = f$ when $f$ is strict, and so that $R(g \circ f) = g
    \circ R(f)$ for each $f \colon A \rightsquigarrow B$ and $g \colon B \to C$;\vskip0.5\baselineskip
\item Moreover, this $R$ may be chosen so that
    $R(g \circ R(f)) = R(g \circ f)$ for each $f \colon A \rightsquigarrow B$ and $g
    \colon B \rightsquigarrow C$.
\end{enumerate}
\end{Theorem}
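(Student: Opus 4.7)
My plan is to prove all three equivalences uniformly via a single construction. In the forward direction, I will suppose that $A$ carries a map $a \colon A \to QA$ of $\TAlgs$ of the appropriate kind---a pseudo-inverse to $p_A$ for (a), a strict section for (b), or a strict $Q$-coalgebra structure for (c)---and define $R(f) := p_B \circ Qf \circ a$ for each pseudomorphism $f \colon A \rightsquigarrow B$. By construction this is always a strict morphism.

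For (a) I use the fully faithfulness of $p_A$ in $\TAlg$, together with $p_A \circ a \cong 1_A = p_A \circ q_A$, to lift the given isomorphism to a unique 2-cell $a \cong q_A$ in $\TAlg(A, QA)$; whiskering and the triangle identity $p_B \circ q_B = 1$ then give $R(f) \cong p_B \circ Qf \circ q_A = f$. For (b), with $a$ a genuine strict section, the equalities $R(f) = f$ on strict $f$ and $R(g \circ f) = g \circ R(f)$ on strict $g$ follow from naturality of $p$ on strict maps and functoriality of $Q$, using $p_A \circ a = 1_A$. For (c) the additional coassociativity $Qa \circ a = \Delta_A \circ a$ lets me show $Q(R(f)) \circ a = Qf \circ a$, via the chain
\[
Qp_B \circ Q^2f \circ Qa \circ a = Qp_B \circ Q^2 f \circ \Delta_A \circ a = Qp_B \circ \Delta_B \circ Qf \circ a = Qf \circ a,
\]
using in turn coassociativity, naturality of $\Delta$ applied to $Qf$, and the comonad identity $Qp \cdot \Delta = 1$. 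Postcomposing with $p_C \circ Qg$ then yields $R(g \circ R(f)) = R(g \circ f)$.

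In the backward direction, in each case I set $a := R(q_A)$, which is always a strict morphism $A \to QA$ by (a). Under (a) alone, the hypothesised isomorphism $a \cong q_A$ whiskered with $p_A$ gives $p_A \circ a \cong 1_A$; combined with the fully faithfulness of $p_A$, this makes $p_A$ an equivalence in $\TAlgs$, so $A$ is semiflexible. Under (b), taking $g = p_A$ strict yields $p_A \circ a = R(p_A \circ q_A) = R(1_A) = 1_A$, so $a$ is a strict section and $A$ is flexible. Under (c), the counit law again comes from (b); for coassociativity I apply the new identity with $f = q_A$ and $g = q_{QA}$, obtaining $R(q_{QA} \circ a) = R(q_{QA} \circ q_A)$. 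Using naturality of $q$ in $\TAlg$ to rewrite both sides---$q_{QA} \circ q_A = \Delta_A \circ q_A$ and $q_{QA} \circ a = Qa \circ q_A$---and then applying (b) to pull out the strict maps $\Delta_A$ and $Qa$, one obtains $\Delta_A \circ a = Qa \circ a$; so $a$ is a strict $Q$-coalgebra and $A$ is pie.

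The only genuinely non-routine step is orchestrating the comonad identities in the forward direction of (c); everything else reduces to the triangle identities of $Q \dashv \iota$, the naturality of $q$, $p$, and $\Delta$, and the fully faithfulness of $p_A$.
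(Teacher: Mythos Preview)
Your proof is correct. In the forward direction it is essentially the paper's argument: the paper writes $R(f) = \bar f \circ a$, where $\bar f \colon QA \to B$ is the adjoint transpose of $f$, and your $R(f) = p_B \circ Qf \circ a$ is precisely that formula with the transpose unpacked via the counit. Your chain of comonad identities for (c) is the same calculation as the paper's, just carried out in the expanded form rather than with $\bar f$ and $\bar g$.

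Where your approach genuinely differs is in the backward direction. The paper does not prove it here at all: it only establishes the forward implications, and then completes the equivalences via a cycle through Theorems~\ref{thm:char2} and~\ref{thm:char3}. You instead give a direct argument, setting $a = R(q_A)$ and extracting the required structure on $a$ from the hypotheses on $R$. This is both correct and pleasant: the key manoeuvre for (c)---applying the hypothesis with $f = q_A$, $g = q_{QA}$, then using naturality of $q$ to rewrite $q_{QA} \circ q_A = \Delta_A \circ q_A$ and $q_{QA} \circ a = Qa \circ q_A$, and finally pulling the strict maps $\Delta_A$ and $Qa$ out via (b)---neatly recovers coassociativity. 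The paper's cycle has the advantage of simultaneously proving three characterisation theorems with minimal duplication; your direct argument has the advantage of making Theorem~\ref{thm:char1} self-contained, and of revealing that $a = R(q_A)$ is exactly the coalgebra structure one would hope for.
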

In fact, we will only prove one direction of this implication here, namely that the semiflexibles, flexibles and pies satisfy clauses (a), (b) and (c) respectively. The other direction will be obtained through a cycle of implications which will be completed in the proofs of Theorems~\ref{thm:char2} and~\ref{thm:char3} below.
\begin{proof}
If $A$ is semiflexible, then $p_A \colon QA \to A$ admits a pseudoinverse $a \colon A \to QA$ in
$\TAlgs$. Now the inclusion $\TAlgs(A, \thg) \to \TAlg(A, \iota \thg)$ is equally the composite
\begin{equation*}
    \TAlgs(A, \thg) \xrightarrow{\TAlgs(p_A, \thg)} \TAlgs(QA, \thg)\xrightarrow{\cong}   \TAlg(A, \iota \thg) 
\end{equation*}
which consequently has pseudoinverse
\begin{equation*}
    R \colon \TAlg(A, \iota \thg) \xrightarrow{\cong} \TAlgs(QA, \thg) \xrightarrow{\TAlgs(a, \thg)} \TAlgs(A, \thg)
\end{equation*}
in $[\TAlgs, \Cat]$. This $R$
assigns to each pseudomorphism $f \colon A \rightsquigarrow B$ a strict morphism $R(f)$ which is isomorphic
to $f$, as required.

If now $A$ is flexible, then the $a$ above may be chosen to be a section of $p_A$, so that the corresponding $R$ satisfies $R(f) = f$ for any strict $f \colon A \to B$. That also $R(g \circ f) = g \circ R(f)$ for any $f \colon A \rightsquigarrow B$ and $g \colon B \to C$ is a direct consequence of $R$'s $2$-naturality.

Finally, if $A$ is pie, then the $a$ above may be chosen so as to exhibit $A$ as a $Q$-coalgebra; now given pseudo maps $f \colon A \rightsquigarrow B$ and $g \colon B \rightsquigarrow C$, corresponding to strict maps $\bar f \colon QA \to B$ and $\bar g \colon QB \to C$, we calculate that
\begin{align*}
R(g \circ R(f)) & = (\bar g \circ Q(\bar f \circ a)) \circ a = \bar g \circ Q\bar f \circ (Qa \circ a) \\
&= \bar g \circ Q\bar f \circ (\Delta_A \circ a) = (\bar g \circ Q \bar f \circ \Delta_A) \circ a = R(g \circ f)\rlap{ .}\qedhere
\end{align*}
\end{proof}
The version of this result for weights is now:
\begin{Theorem}\label{thm:char1weight}
A weight $W \in [\f J, \Cat]$ is semiflexible, flexible, or pie, just when, respectively:
\begin{enumerate}[(a)]
\item For every $2$-category $\f K$ and every $X \in \f K$, there is a function assigning to each $W$-weighted pseudocone $\alpha \colon X \dot{\rightsquigarrow} D$ a strict cone $R(\alpha) \colon X \dot{\rightarrow} D$ which is isomorphic to $\alpha$ in $\mathrm{PsCone}_W(X,D)$;\vskip0.5\baselineskip
\item Moreover, these $R$ may be chosen so that
    $R(\alpha) = \alpha$ whenever $\alpha$ is a strict cone, so that $R(\beta \circ \alpha) = \beta
    \circ R(\alpha)$ for each $\alpha \colon X \dot{\rightsquigarrow} D$ and $2$-natural $\beta \colon D \to D'$, and so that $R(\alpha \circ f)=R(\alpha)\circ f$ for each $\alpha \colon X \dot{\rightsquigarrow} D$ and $f \colon X^{\prime} \to X \in \f K$;\vskip0.5\baselineskip
\item Moreover, these $R$ may be chosen so that
    $R(\beta \circ R(\alpha)) = R(\beta \circ \alpha)$ for each $\alpha \colon X \dot{\rightsquigarrow} D$ and pseudonatural $\beta
    \colon D \rightsquigarrow D'$.
\end{enumerate}
\end{Theorem}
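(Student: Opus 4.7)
The plan is to reduce Theorem~\ref{thm:char1weight} to Theorem~\ref{thm:char1} applied to the weight $2$-monad $T$ on $[\mathrm{ob}\ \f J, \Cat]$. The key observation is that, under the identification of the $2$-category of weights with $\TAlgs$ for this $T$, the hom of algebra pseudomorphisms $\TAlg(W, V)$ coincides with the hom of pseudonatural transformations $\Ps(\f J, \Cat)(W, V)$; so a $W$-weighted pseudocone $\alpha \colon X \dot\rightsquigarrow D$---by definition a pseudonatural $W \to \f K(X, D\thg)$---is precisely a pseudomorphism $\hat\alpha \colon W \rightsquigarrow \f K(X, D\thg)$ of $T$-algebras, and strict cones correspond to strict morphisms. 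Moreover $\f K(X, D\thg)$ is strictly $2$-functorial in $X \in \f K^{\op}$ and in $D \in [\f J, \f K]$, and pseudofunctorial in $D \in \Ps(\f J, \f K)$, so that a map $f \colon X' \to X$ in $\f K$, a $2$-natural $\beta \colon D \to D'$, or a pseudonatural $\beta \colon D \rightsquigarrow D'$ induces, respectively, a strict, a strict, or a pseudo morphism of weights between the corresponding hom functors.

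For the forward direction, suppose $W$ is semiflexible, flexible, or pie. Applying the corresponding clause of Theorem~\ref{thm:char1} to $W$ furnishes a function $R$ on pseudomorphisms out of $W$; for each $\f K$, $X$, and pseudocone $\alpha$, we then define the required replacement to be the strict cone corresponding to $R(\hat\alpha)$. The condition $R(\alpha) = \alpha$ for strict $\alpha$ is immediate from $R(\hat\alpha) = \hat\alpha$ on strict pseudomorphisms. The remaining compatibilities follow from the $2$-naturality in the target argument of the algebra-level $R$, which is a direct consequence of its construction in the proof of Theorem~\ref{thm:char1} as a composite in $[\TAlgs, \Cat]$: $2$-naturality at the strict weight morphisms $f^* \colon \f K(X, D\thg) \to \f K(X', D\thg)$ and $\beta_* \colon \f K(X, D\thg) \to \f K(X, D'\thg)$ yields the two compatibility conditions of clause (b), whilst the algebra-level clause (c) applied to the induced pseudomorphism $\beta_* \colon \f K(X, D\thg) \rightsquigarrow \f K(X, D'\thg)$ yields clause (c) of the weight version.

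As in Theorem~\ref{thm:char1}, we defer the converse implications to the cycle of results completed by Theorems~\ref{thm:char2} and~\ref{thm:char3}; they are in any case an immediate specialisation, obtained by taking $\f K = \Cat$ and $X$ the terminal object, so that $\f K(1, D\thg) \cong D$ and a pseudocone $1 \dot\rightsquigarrow V$ is nothing more than a pseudomorphism $W \rightsquigarrow V$ in $\TAlg$; this reduces each weight-level hypothesis (a), (b), (c) to the corresponding algebra-level hypothesis of Theorem~\ref{thm:char1} for $A = W$. The substantive content is thus entirely carried by Theorem~\ref{thm:char1}, and the only real work here is organisational: one must track carefully which strict or pseudo morphisms of representable weights correspond to which pieces of the naturality data of the algebra-level $R$, but no new calculation is required.
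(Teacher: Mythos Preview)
Your proposal is correct and follows essentially the same approach as the paper: both identify $W$-weighted (pseudo)cones with (pseudo)morphisms of $T$-algebras out of $W$ for the weight $2$-monad, derive the forward direction from Theorem~\ref{thm:char1}, and observe that the extra clause $R(\alpha \circ f) = R(\alpha) \circ f$ in~(b) follows because $\f K(f, D\thg)$ is a \emph{strict} weight morphism, so falls under the postcomposition condition of Theorem~\ref{thm:char1}(b). The one minor difference is in the treatment of the converse: the paper defers to the weight-level cycle through Theorems~\ref{thm:char2weight} and~\ref{thm:char3weight}, whereas you instead specialise to $\f K = \Cat$, $X = 1$ to recover the algebra-level hypotheses of Theorem~\ref{thm:char1} and then invoke the algebra-level cycle---a valid shortcut that the paper does not take.
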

Again, we prove only the forward implication, with the other direction now following from the proofs of Theorems~\ref{thm:char2weight} and~\ref{thm:char3weight} below.
\begin{proof}
Viewing $W$ as an algebra for the weight 2-monad on $[\mathrm{ob}\ \f J,\Cat]$, the strict and pseudo cones from $X$ to $D$ are now strict and pseudo algebra morphisms with domain $W$.  This being the case, we see that each clause of the present theorem has a corresponding clause in Theorem~\ref{thm:char1} of which it is an immediate consequence; the only exception being that in (b) above we also demand that $R(\alpha \circ f) = R(\alpha) \circ f$ for each $\alpha \colon X \dot{\rightsquigarrow} D$ and each $f \colon X' \to X$ in $\f K$.  However, the pseudocone $\alpha \circ f$ is given by the composite
\begin{equation*}
\cd[@C+1em]{
W \ar@{~>}[r]^-\alpha & \f K(X, D \thg) \ar[r]^{\f K(f, D\thg)} & \f K(X', D\thg)\rlap{ ,}
}
\end{equation*}
and since $\f K(f,D\thg)$ is 2-natural and occurs to the right of $\alpha$, we see that this clause too follows from Theorem~\ref{thm:char1}(b).
\end{proof}
Recall that, given $W \in [\f J, \Cat]$ and $D \colon \f J \to \f K$, a \emph{bilimit of $D$ weighted by $W$} is a birepresentation for the $2$-functor $\mathrm{PsCone}_W(\thg, D)$; that is, a pseudocone $\alpha \colon U \dot{\rightsquigarrow} D$, composition with which induces an equivalence of categories $\f K(X, U) \to \mathrm{PsCone}_W(X, D)$ for each $X \in \f K$. Now given a limit $\{W, D\}$ existing in a 2-category $\f K$, we may ask whether its limiting cone $\alpha \colon \{W, D\} \dot{\to} D$ also exhibits $\{W, D\}$ as the $W$-weighted bilimit of $D$; which is equally to ask that the composite
\begin{equation}\label{eq:coneps}
\f K(X,\{W, D\}) \cong \mathrm{Cone}_W(X, D) \to \mathrm{PsCone}_W(X, D)
\end{equation}
should be an equivalence for each $X \in \f K$. If this is so for every $W$-weighted limit existing in every $2$-category, let us then say that \emph{limits weighted by $W$ are bilimits}.
\begin{Corollary}\label{cor:semiflexbi}
 A weight $W$ is semiflexible if and only if limits weighted by $W$ are bilimits.
\end{Corollary}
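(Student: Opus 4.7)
My plan is to reduce both implications to the following reformulation of semiflexibility: the weight $W$ is semiflexible if and only if the inclusion $[\f J, \Cat](W, Y) \hookrightarrow \Ps(\f J, \Cat)(W, Y)$ is an equivalence of categories for every $Y \in [\f J, \Cat]$. Indeed, by the $2$-categorical Yoneda lemma, $p_W \colon QW \to W$ is an equivalence in $\TAlgs = [\f J, \Cat]$ precisely when $\TAlgs(p_W, Y)$ is an equivalence of categories for every $Y$; and under the defining isomorphism $\TAlgs(QW, Y) \cong \TAlg(W, Y) = \Ps(\f J, \Cat)(W, Y)$ of the adjunction $Q \dashv \iota$, a short calculation using the triangle identities identifies $\TAlgs(p_W, Y)$ with exactly this inclusion.

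For the forward direction, supposing $W$ semiflexible, I would invoke Theorem~\ref{thm:char1weight}(a) to the effect that, for any $2$-category $\f K$, any $X \in \f K$, and any $D \colon \f J \to \f K$, each pseudocone $X \dot{\rightsquigarrow} D$ is isomorphic in $\mathrm{PsCone}_W(X, D)$ to a strict one. Thus the inclusion $\mathrm{Cone}_W(X, D) \hookrightarrow \mathrm{PsCone}_W(X, D)$ is essentially surjective; it is also fully faithful, since a modification between two $2$-natural transformations $W \Rightarrow \f K(X, D\thg)$ is the same data whether interpreted in $[\f J, \Cat]$ or in $\Ps(\f J, \Cat)$. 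Composing the resulting equivalence with the representing isomorphism $\f K(X, \{W, D\}) \cong \mathrm{Cone}_W(X, D)$ exhibits~\eqref{eq:coneps} as an equivalence, so $\{W, D\}$ is a bilimit.

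For the reverse direction, I would specialise the hypothesis to $\f K = \Cat$, $D = Y$ with $Y \in [\f J, \Cat]$ arbitrary, and $X = 1$ the terminal category. The limit $\{W, Y\} = [\f J, \Cat](W, Y)$ exists in $\Cat$, and under the canonical identification $\Cat(1, Y\thg) \cong Y$ the map~\eqref{eq:coneps} becomes exactly the inclusion $[\f J, \Cat](W, Y) \hookrightarrow \Ps(\f J, \Cat)(W, Y)$, which is therefore an equivalence; as $Y$ is arbitrary, the opening reformulation yields that $W$ is semiflexible.

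I do not anticipate a significant obstacle: the forward direction reuses Theorem~\ref{thm:char1weight}(a), proved above, while the backward direction is a clean specialisation of the hypothesis. The only point requiring care is the reformulation in the opening paragraph, namely the verification that the adjunction isomorphism identifies $\TAlgs(p_W, Y)$ with the strict-into-pseudo inclusion, which is a routine application of the triangle identities for $Q \dashv \iota$.
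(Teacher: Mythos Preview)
Your proof is correct. The forward direction coincides with the paper's: both invoke Theorem~\ref{thm:char1weight}(a) to see that the inclusion $\mathrm{Cone}_W(X,D) \hookrightarrow \mathrm{PsCone}_W(X,D)$ is essentially surjective, hence (being always fully faithful) an equivalence.

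The reverse directions differ. The paper applies the hypothesis to the diagram $\f K(X,D\thg)\colon \f J \to \Cat$ to conclude that $\mathrm{Cone}_W(X,D) \to \mathrm{PsCone}_W(X,D)$ is an equivalence for \emph{all} $X$, $D$, $\f K$, and then invokes the reverse implication of Theorem~\ref{thm:char1weight}(a). You instead specialise further to $\f K = \Cat$, $X = 1$, $D = Y$, so that~\eqref{eq:coneps} becomes the inclusion $[\f J,\Cat](W,Y) \hookrightarrow \Ps(\f J,\Cat)(W,Y)$, and then use your opening reformulation---that $p_W$ is an equivalence in $\TAlgs$ precisely when homming into every $Y$ yields an equivalence, this hom being identified (via the triangle identity $p_W \circ q_W = 1$) with that inclusion. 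Your route is more self-contained: the reverse implication of Theorem~\ref{thm:char1weight}(a) is in the paper only established through the full cycle via Theorems~\ref{thm:char2weight} and~\ref{thm:char3weight}, whereas your argument needs none of this. The paper's route, on the other hand, keeps the corollary as a straightforward byproduct of the characterisation theorems already being developed. One small remark: what you call ``the $2$-categorical Yoneda lemma'' is really the standard consequence that a morphism $f$ is an equivalence once each $\f C(f,Y)$ is; this follows easily from full faithfulness and essential surjectivity of the relevant hom-functors, but is not the Yoneda lemma per se.
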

\begin{proof}
If $W \in [\f J, \Cat]$ is a semiflexible weight, then by Theorem~\ref{thm:char1weight}(a), each inclusion $\mathrm{Cone}_W(X, D) \to \mathrm{PsCone}_W(X, D)$ is essentially surjective on objects;  since these inclusions are always fully faithful, we conclude that they are equivalences. Thus for every suitable $\f K$, $D$ and $X$, the functor~\eqref{eq:coneps} is the composite of two equivalences and so itself an equivalence.

For the converse, first observe that if both the limit $\{W,F\}$ and pseudolimit $\{W,F\}_{\mathrm{ps}}$ of a diagram $F$ exist in some $2$-category then the limit $\{W,F\}$ is a bilimit just when the canonical map $\{W,F\} \to \{W,F\}_{\mathrm{ps}}$ is an equivalence.  Now letting $X$, $D$ and $\f K$ be arbitrary as before, both the limit and pseudolimit in \Cat of the diagram $\f K(X,D\thg) \colon  \f J \to \Cat$ do exist and the canonical map $\{W,\f K(X,D \thg)\} \to \{W,\f K(X,D \thg)\}_{\mathrm{ps}}$ is precisely the inclusion $\mathrm{Cone}_W(X, D) \to \mathrm{PsCone}_W(X, D)$.  Thus if limits weighted by $W$ are bilimits, this map is an equivalence for any choice of $X$, $D$ and $\f K$; in particular, it is essentially surjective on objects so that by Theorem~\ref{thm:char1weight}(a), $W$ is semiflexible. 
\end{proof}

\subsection{Second characterisation}

We now give our second characterisation result; as for our other results, it will have six versions, one for each of the semiflexible, flexible and pie algebras, and one for each corresponding class of weights. It takes its most intuitive form in the case of the pie weights: here it states that a weight $W$ is pie just when for every complete category $\f K$, the weighted limit $2$-functor $\{W, \thg\} \colon [\f J, \f K] \to \f K$ may be extended to a $2$-functor $\Ps(\f J, \f K) \to \f K$. That such extensions exist for pie weights was shown as a special case of~\cite[Proposition~5.8]{Lack2011Enhanced}; what was not shown there is that such extensions exist \emph{only} for pie weights. The corresponding result for a general pie algebra $A$ is that the hom $2$-functor $\TAlgs(A, \thg) \colon \TAlgs \to \Cat$ may be extended to a $2$-functor $\TAlg \to \Cat$, whilst in the flexible and semiflexible situations, we obtain similar extensions, but of successively weaker kinds. 

Let us now give these results, first for algebras and then for weights:

\begin{Theorem}\label{thm:char2}
A $T$-algebra $A$ is semiflexible, flexible, or pie, just when, respectively:
\begin{enumerate}[(a)]
\item The hom $2$-functor $\TAlgs(A, \thg)$ admits an extension
\begin{equation}\label{eq:rextension}
    \cd[@C+1em]{
        \TAlgs \ar[rr]^{\iota} \ar[dr]_{\TAlgs(A, \thg)\ \ \ } & \rtwocell{d}{\theta} & \TAlg
        \ar@{~>}[dl]^{\ \ \ H} \\ &
        \Cat    }
\end{equation}
where $H$ is a pseudofunctor and $\theta$ an invertible icon; \vskip0.5\baselineskip
\item This extension may moreover be chosen so that $\theta$ is an identity, and so that $H$'s pseudofunctoriality constraints $Hg \circ Hf \to H(g \circ f)$ are identities whenever either $f$ or $g$ is strict;\vskip0.5\baselineskip
 \item This extension may moreover be chosen so that $H$ is a $2$-functor.
\end{enumerate}
\end{Theorem}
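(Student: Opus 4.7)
The plan is to prove each equivalence in both directions, using the characterisations of Theorem~\ref{thm:char1} as the input to the forward implications, and reading off the required structure on $A$ from $H(q_A)(1_A)$ in the backward direction.

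For the forward direction, recall from the proof of Theorem~\ref{thm:char1} that the $R$ constructed there is the action on objects of a $2$-natural transformation $R \colon \TAlg(A, \iota \thg) \to \TAlgs(A, \thg)$ in $[\TAlgs, \Cat]$ which is pseudoinverse (resp.\ a strict retraction) to the inclusion $\iota_*$. Define $H \colon \TAlg \to \Cat$ on objects by $H(B) := \TAlgs(A, B)$ and on a pseudomorphism $g \colon B \rightsquigarrow C$ by the functor sending $h$ to $R(g \circ h)$, with action on $2$-cells inherited from the $2$-naturality of $R$. The pseudofunctoriality constraint $H(g_2) H(g_1) \to H(g_2 g_1)$, which componentwise reads $R(g_2 \circ R(g_1 \circ h)) \cong R(g_2 \circ g_1 \circ h)$, is obtained by applying $R$ to the iso $R(g_1 \circ h) \cong g_1 \circ h$ provided by $\iota_* R \cong 1$; the invertible icon $\theta \colon \TAlgs(A, \thg) \Rightarrow H \iota$ has, at a strict $f \colon B \to C$, the component $f \circ h \to R(f \circ h)$ provided by $R \iota_* \cong 1$. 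Under the hypotheses of (b), $R \iota_* = 1$ and $R(g \circ f) = g \circ R(f)$ for strict $g$ force $\theta$ and the mixed pseudofunctoriality constraints to be identities; under (c), the additional identity $R(g \circ R(f)) = R(g \circ f)$ makes every pseudofunctoriality constraint an identity, so $H$ becomes a $2$-functor.

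For the backward direction, given $H$ and $\theta$ as in (a), define $a := H(q_A)(1_A) \colon A \to QA$ in $\TAlgs$, where $1_A$ is viewed as an object of $H(A) = \TAlgs(A, A)$. The key calculation shows $p_A \circ a \cong 1_A$: using the icon $\theta$ at the strict $p_A$, the composition constraint $H(p_A) H(q_A) \cong H(p_A \circ q_A)$, the triangle identity $p_A \circ q_A = 1_A$ in $\TAlg$, and the unit constraint $H(1_A) \cong 1_{H(A)}$, we obtain $p_A \circ a \cong H(1_A)(1_A) \cong 1_A$. Since $p_A$ is fully faithful in $\TAlgs$, this exhibits $a$ as a pseudoinverse of $p_A$, so $A$ is semiflexible. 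Under the hypotheses of (b) all the constraints involved are identities, so $p_A \circ a = 1_A$ strictly and $A$ is flexible. Under (c) one further verifies coassociativity $\Delta_A \circ a = Q a \circ a$: by the $2$-functoriality of $H$ and the $2$-naturality of $q$ (which yields both $\Delta_A \circ q_A = q_{QA} \circ q_A$ and $Q a \circ q_A = q_{QA} \circ a$), together with $H(a)(1_A) = a$ (since $a$ is strict and $\theta$ is the identity), each side reduces to $H(q_{QA})(a)$. Hence $a$ equips $A$ with $Q$-coalgebra structure and $A$ is pie.

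The main obstacle I anticipate is the bookkeeping of pseudofunctoriality constraints as we strengthen from (a) to (c); in particular the pie coassociativity verification hinges on combining the full $2$-functoriality of $H$ with two applications of $2$-naturality of $q$ to reduce both sides of the coalgebra axiom to the common expression $H(q_{QA})(a)$. One should also note that in (b) the identity composition constraints, applied when one argument is $1$, force the unit constraint $H(1_A) = 1_{H(A)}$ at strict objects, a fact used silently in the flexible backward computation.
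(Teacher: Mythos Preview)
Your proof is correct. The forward direction is essentially the paper's own argument: the paper abstracts the construction into its Lemma~\ref{lem:importantlemma}, applied with $F = \TAlgs(A,\thg)$, $G = \TAlg(A,\thg)$ and $\alpha$ the inclusion, but the resulting pseudofunctor $H$ is exactly yours, with $H(g)(h) = R(g \circ h)$ and coherence constraints built from the isomorphism $\iota_* R \cong 1$.

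Your backward direction, however, is genuinely different from the paper's. The paper does \emph{not} prove the converse implications here at all; instead it runs a cycle of implications, deducing Theorem~\ref{thm:char3} from Theorem~\ref{thm:char2} and then recovering semiflexibility, flexibility and pieness from the clauses of Theorem~\ref{thm:char3}. Your approach short-circuits this by reading off the structure map $a = H(q_A)(1_A)$ directly from the extension $H$ and verifying the counit and (in case~(c)) coassociativity axioms by hand, using $2$-naturality of $q$ to rewrite both $\Delta_A \circ a$ and $Qa \circ a$ as $H(q_{QA})(a)$. This is more elementary and self-contained, and makes the dependence of the coalgebra structure on $H$ completely transparent. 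The paper's cycle, on the other hand, buys a uniform treatment: once the three characterisation theorems are linked in a loop, each becomes equivalent to the others with no separate converse arguments needed, and the parallel statements for weights follow with essentially no extra work.
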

The notion of \emph{icon} in clause (a) of the statement of this theorem is as it was in Section~\ref{sec:2catsrelated}, though now we are concerned only with \emph{invertible} icons: these are pseudonatural transformations all of whose $1$-cell components are all identities.
\begin{proof}
We continue our cycle of implications; thus we must show that clauses (a), (b) and (c) of Theorem~\ref{thm:char1} imply the corresponding clauses of the present theorem.  In each case this will be achieved by establishing, successively stronger, properties of the 2-natural transformation
\begin{equation}\label{eq:iotainduced}
\cd[@C+1em]{
  \TAlgs \ar[rr]^\iota \ar[dr]_{\TAlgs(A, \thg)\ \ \ } & \rtwocell[0.4]{d}{} & \TAlg \ar[dl]^{\ \ \ \TAlg(A, \thg)} \\
&  \Cat
}
\end{equation}
induced by $\iota$.  These properties of ~\eqref{eq:iotainduced} will feed into the corresponding clause of Lemma~\ref{lem:importantlemma} below, which is concerned with the construction of extensions, and the theorem will follow directly.

Assume first that Theorem~\ref{thm:char1}(a) holds; then the function $R$ given there witnesses~\eqref{eq:iotainduced} as pointwise essentially surjective on objects. Since~\eqref{eq:iotainduced} is always pointwise fully faithful, it is therefore in this case a pointwise equivalence.  Clause (a) of the present theorem now follows directly upon application of Lemma~\ref{lem:importantlemma}(a).

In the situation of Theorem~\ref{thm:char1}(b) the function $R$ is also assumed to satisfy $Rf=f$ whenever $f$ is strict; such an $R$ determines a family of pointwise sections $\rho_{B} \colon \TAlg(A,B) \to \TAlgs(A,B)$ of~\eqref{eq:iotainduced}. The further condition imposed on $R$ in Theorem~\ref{thm:char1}(b) amounts to the assertion that the family $\rho$ is 2-natural; thus a section of~\eqref{eq:iotainduced}, which is consequently an injective equivalence in $[\TAlgs, \Cat]$.  Clause (b) of the present theorem now follows directly from Lemma~\ref{lem:importantlemma}(b).

Clause (c) of Theorem~\ref{thm:char1} imposes one further condition on $R$, which, rephrased in terms of $\rho$, asserts precisely that the diagram:
\begin{equation*}
\cd[@C+0.5em]{
	\TAlg(A,B) \ar[d]_{\TAlg(A,g)} \ar[r]^-{\rho_{B}} &
	\TAlgs(A,B) \ar[r]^-{\iota} &
	\TAlg(A,B) \ar[d]^{\TAlg(A,g)} \\
    \TAlg(A,C) \ar[r]_-{\rho_{C}} &
	\TAlgs(A,C) &
	\TAlg(A,C) \ar[l]^-{\rho_{C}}
}
\end{equation*}
is commutative for all $g \colon B \rightsquigarrow C$ in $\TAlg$; and now clause (c) of the present theorem follows directly from Lemma~\ref{lem:importantlemma}(c).
\end{proof}
\begin{Theorem}\label{thm:char2weight}
A weight $W \in [\f J, \Cat]$ is semiflexible, flexible, or pie, just when, respectively:
\begin{enumerate}[(a)]
\item For all complete $2$-categories $\f K$, the limit $2$-functor $\{W, \thg\}\colon [\f J, \f K] \to \f K$ admits an extension
\begin{equation}\label{eq:rextensionweight}
    \cd[@C+1em]{
        [\f J, \f K] \ar[rr]^-{\iota} \ar[dr]_{\{W, \thg\}\ \ } & \rtwocell{d}{\theta} &
        \Ps(\f J, \f K) \ar@{~>}[dl]^{\ \ H}\\
        & \f K
    }
\end{equation}
where $H$ is a pseudofunctor and $\theta$ an invertible icon; \vskip0.5\baselineskip
\item This extension may moreover be chosen so that $\theta$ is an identity, and so that $H$'s pseudofunctoriality constraints $Hg \circ Hf \to H(g \circ f)$ are identities whenever either $f$ or $g$ is strict;\vskip0.5\baselineskip
 \item This extension may moreover be chosen so that $H$ is a $2$-functor.
\end{enumerate}
\end{Theorem}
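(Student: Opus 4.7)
The plan is to bootstrap the result from Theorem~\ref{thm:char2} applied to the weight $2$-monad $T$ on $[\mathrm{ob}\,\f J, \Cat]$. Under the resulting $2$-monadic identifications $\TAlgs = [\f J, \Cat]$ and $\TAlg = \Ps(\f J, \Cat)$, the hom $2$-functor $\TAlgs(W, \thg)$ coincides with the limit $2$-functor $\{W, \thg\} \colon [\f J, \Cat] \to \Cat$, and the inclusion $\iota \colon \TAlgs \to \TAlg$ is just the $\f K = \Cat$ instance of $[\f J, \f K] \to \Ps(\f J, \f K)$.

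The backward direction is immediate. If, for every complete $2$-category $\f K$, an extension of the form~\eqref{eq:rextensionweight} exists satisfying the relevant clause, we specialise to $\f K = \Cat$ (which is complete and cocomplete); the resulting data is exactly an extension of $\TAlgs(W, \thg)$ of the kind required by the corresponding clause of Theorem~\ref{thm:char2} applied to $T$. That theorem then forces $W$ to be semiflexible, flexible or pie respectively.

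For the forward direction, assume $W$ lies in the appropriate class. Theorem~\ref{thm:char2} applied to $T$ furnishes an extension $H' \colon \Ps(\f J, \Cat) \to \Cat$ of $\TAlgs(W, \thg)$ together with an invertible icon $\theta'$ whose properties match those demanded in clauses (a), (b) or (c). For a general complete $2$-category $\f K$ we propagate $(H', \theta')$ to the required $(H, \theta)$ by the Yoneda lemma. On objects set $HD := \{W, D\}$. For a pseudonatural transformation $\phi \colon D \rightsquigarrow D'$ and each $X \in \f K$, postcomposition with $\phi$ gives a pseudonatural transformation $\f K(X, \phi\thg) \colon \f K(X, D\thg) \rightsquigarrow \f K(X, D'\thg)$ in $\Ps(\f J, \Cat)$; applying $H'$ to it and conjugating by the defining isomorphisms $\f K(X, \{W, E\}) \cong \TAlgs(W, \f K(X, E\thg))$ together with the components of $\theta'$ produces a functor $\f K(X, \{W, D\}) \to \f K(X, \{W, D'\})$ which is $2$-natural in $X$; by Yoneda this corresponds to a unique map $H\phi \colon \{W, D\} \to \{W, D'\}$ in $\f K$. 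The pseudofunctoriality constraints of $H$ and the $2$-cell components of $\theta$ are produced in precisely the same Yoneda-style manner from those of $H'$ and $\theta'$. Because the construction is manifestly $2$-natural in $\f K$ and reduces to $(H', \theta')$ when $\f K = \Cat$, the enhanced properties demanded in clauses (b) and (c) — $\theta$ being the identity icon, the constraints being identities when one of the factors is strict, and $H$ itself being a $2$-functor — transfer directly from the corresponding properties of $(H', \theta')$.

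The main obstacle is the bookkeeping verification that the Yoneda-induced structure on $H$ satisfies the pseudofunctor (resp.\ $2$-functor) axioms and that $\theta$ is a well-defined invertible icon. This is essentially formal; the slickest packaging is to note that the construction is $2$-natural in $\f K$ through the representables $\f K(X, \thg) \colon \f K \to \Cat$, so that coherence for $H$ follows from coherence for $H'$ by Yoneda.
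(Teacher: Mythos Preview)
Your backward direction is fine: specialising to $\f K = \Cat$ and invoking Theorem~\ref{thm:char2} for the weight $2$-monad is exactly right, and is a more direct route than the paper's, which instead closes the cycle through Theorem~\ref{thm:char3weight}.

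The forward direction, however, has a genuine gap in case~(a). Your Yoneda transfer requires the family
\[
X \ \longmapsto\ \Big(\f K(X,\{W,D\}) \cong [\f J,\Cat](W,\f K(X,D\thg)) \xrightarrow{H'(\f K(X,\phi\thg))} [\f J,\Cat](W,\f K(X,D'\thg)) \cong \f K(X,\{W,D'\})\Big)
\]
to be strictly $2$-natural in $X$, so that the ordinary Yoneda lemma produces a morphism $H\phi$ in $\f K$. But for $f \colon X' \to X$, the restriction map $\f K(X,\{W,D\}) \to \f K(X',\{W,D\})$ corresponds to $[\f J,\Cat](W,\f K(f,D\thg))$, which in case~(a) is only \emph{isomorphic} to $H'(\f K(f,D\thg))$ via the $2$-cell component $\theta'_{\f K(f,D\thg)}$ of the icon, not equal; and $H'$, being merely a pseudofunctor, gives only $H'(\f K(X',\phi\thg)) \circ H'(\f K(f,D\thg)) \cong H'(\f K(f,D'\thg)) \circ H'(\f K(X,\phi\thg))$. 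So the naturality square commutes only up to a non-trivial isomorphism built from $\theta'$ and the pseudofunctoriality constraints of $H'$. That is not ``essentially formal'' bookkeeping; it blocks the application of the strict Yoneda lemma, and there is no reason the resulting coherence isomorphisms assemble into a well-defined $H\phi$. Your argument does go through in cases~(b) and~(c), since there $\theta'$ is an identity and $H'$ preserves composites with strict maps strictly, making the family genuinely $2$-natural in $X$.

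The paper sidesteps this difficulty entirely. Rather than transfer an extension from $\Cat$ to general $\f K$, it works directly in $\f K$: for any complete $\f K$ one already has the honest $2$-functor $\{W,\thg\}_{\mathrm{ps}} \colon \Ps(\f J,\f K) \to \f K$ and a $2$-natural comparison $\gamma \colon \{W,\thg\} \Rightarrow \{W,\thg\}_{\mathrm{ps}} \circ \iota$. The properties of $R$ supplied by Theorem~\ref{thm:char1weight}, read through the isomorphisms $\f K(X,\gamma_D) \cong \big(\mathrm{Cone}_W(X,D) \hookrightarrow \mathrm{PsCone}_W(X,D)\big)$, show $\gamma$ to be a pointwise equivalence, an injective equivalence with section, or to satisfy the condition~\eqref{eq:tobeatwofunctordiag}, in cases (a), (b), (c) respectively; Lemma~\ref{lem:importantlemma} then manufactures $H$ from $\{W,\thg\}_{\mathrm{ps}}$ with the required properties. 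This buys a uniform treatment of all three cases and avoids any naturality-in-$X$ issue, since $\{W,\thg\}_{\mathrm{ps}}$ is a strict $2$-functor from the outset.
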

\begin{proof}
We continue our cycle of implications by showing that clauses (a), (b) and (c) of Theorem~\ref{thm:char1weight} imply the corresponding clauses of the present theorem.  This time, we do so by establishing, for a given complete $2$-category $\f K$, successively stronger properties of the 2-natural transformation
\begin{equation*}
\cd[@C+1em]{
  [\f J, \f K] \ar[rr]^-\iota \ar[dr]_{\{W, \thg\}\ \ } & \rtwocell[0.4]{d}{\gamma} &
  \Ps(\f J, \f K) \ar[dl]^{\ \ \{W, \thg\}_\mathrm{ps}} \\ &
  \f K
}
\end{equation*}
induced by the canonical comparison maps $\{W, D\} \to \{W, D\}_\mathrm{ps}$; these properties fed into the corresponding clauses of Lemma~\ref{lem:importantlemma} then imply the present theorem.

Let us first show, assuming Theorem~\ref{thm:char1weight}(a), that $\gamma$ is a pointwise equivalence.  This will be the case precisely if $\f K(X, \gamma)$ is a pointwise equivalence for each $X \in \f K$.  But $\f K(X, \gamma)$ is isomorphic to the inclusion $\mathrm{Cone}_W(X,\thg) \to \mathrm{PsCone}_W(X,\thg)$, which is always pointwise fully faithful, and is pointwise essentially surjective as witnessed by the function $R$ of Theorem~\ref{thm:char1weight}(a).  Now applying Lemma~\ref{lem:importantlemma}(a) yields clause (a) of the present theorem.

Suppose next that Theorem~\ref{thm:char1weight}(b) holds. To give a section of $\gamma$ in $[[\f J, \f K], \f K]$ is, by the Yoneda lemma, to give sections of each $\f K(X, \gamma)$ which are $2$-natural in $X \in \f K$; which is equally well to give sections of the isomorphic
$$\mathrm{Cone}_W(X,\thg) \to \mathrm{PsCone}_W(X,\thg)\rlap{ ,}$$
$2$-naturally in $X$. The witnessing function $R$ of Theorem~\ref{thm:char1weight}(b) provides just such a section; assuming this we therefore obtain a section $\rho$ of $\gamma$, which is consequently an injective equivalence in $[[\f J, \f K], \f K]$.  Clause (b) of the theorem now follows directly from Lemma~\ref{lem:importantlemma}(b).

The additional property of $R$ specified in Theorem~\ref{thm:char1weight}(c) now translates, in terms of $\rho$, to the commutativity of
\begin{equation*}
\cd[@C+0.5em]{
	\{W, A\}_\mathrm{ps} \ar[d]_{\{W, g\}_\mathrm{ps}} \ar[r]^{\rho_{A}} &
	\{W, A\} \ar[r]^{\gamma_{A}} &
	\{W, A\}_\mathrm{ps} \ar[d]^{\{W, g\}_\mathrm{ps}} \\
    \{W, B\}_\mathrm{ps} \ar[r]_{\rho_{B}} &
	\{W, B\} &
	\{W, B\}_\mathrm{ps} \ar[l]^{\rho_{B}}
}
\end{equation*}
for all pseudonatural transformations of diagrams $g \colon A \rightsquigarrow B$; and clause (c) of the present theorem now follows directly from Lemma~\ref{lem:importantlemma}(c).
\end{proof}
We now give the lemma which was used in the proof of the above two results.
\begin{Lemma}\label{lem:importantlemma}
Consider a diagram \begin{equation*}
    \cd[@+1em]{
        \f A \ar[rr]^{I} \ar[dr]_{F} & \rtwocell{d}{\alpha} & \f B
        \ar[dl]^{ G} \\ &
        \f C    }
\end{equation*}
of $2$-categories, $2$-functors and a pseudonatural transformation $\alpha$.
\begin{enumerate}[(a)]
\item
If $I$ is bijective on objects and $\alpha$ a pointwise equivalence, then this diagram admits a factorisation as
\begin{equation*}
    \cd[@+1em]{
        \f A \ar[rr]^{I} \ar[dr]_{F} & \rtwocell{d}{\alpha_1} \rtwocell[0.7]{dr}{\alpha_2}& \f B
        \ar@{~>}[dl]|{H} \ar@/^26pt/[dl]^{G}\\ &
        \f C   & {} }
\end{equation*}
where $H$ is a pseudofunctor, $\alpha_1$ an invertible icon and $\alpha_2$ a pseudonatural equivalence. \vskip0.5\baselineskip
\item If in the situation of (a), $\alpha$ is in fact $2$-natural, and moreover admits a retraction $\beta$ in $[\f A, \f C]$, then the $\alpha_1$ of this factorisation may be chosen to be the identity;
it then follows that $H$ strictly preserves composition with maps from $\f A$, in the sense that the pseudofunctoriality constraint $Hg \circ Hf \to H(g \circ f)$ is an identity whenever either $f$ or $g$ is in the image of $I$.
\vskip0.5\baselineskip
\item If in the situation of (b), the retraction $\beta$
may be chosen so that
the diagram
\begin{equation}\label{eq:tobeatwofunctordiag}
\cd[@C+0.5em]{
	GIB \ar[d]_{Gg} \ar[r]^-{\beta_B} &
	FB \ar[r]^-{\alpha_B} &
	GIB \ar[d]^{Gg} \\
	GIC \ar[r]_-{\beta_C} &
	FC &
    GIC \ar[l]^{\beta_C}
}
\end{equation}
commutes for every $g \colon IB \to IC$ in $\f B$, then $H$ may be taken to be a $2$-functor.
\end{enumerate}
\end{Lemma}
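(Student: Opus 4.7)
The plan is a transport-of-structure argument. Since $I$ is bijective on objects, for each $B \in \f B$ there is a unique $A_B \in \f A$ with $IA_B = B$, and we may arrange that $A_{IA} = A$; set $HB := FA_B$ on objects, so that $HI$ and $F$ agree on objects. For each $B$, use that $\alpha_B \colon FA_B \to GB$ is an equivalence in $\f C$ to pick adjoint-equivalence data $(\alpha_B, \bar\alpha_B, \eta_B \colon 1 \cong \bar\alpha_B \alpha_B, \epsilon_B \colon \alpha_B \bar\alpha_B \cong 1)$. For (a), on $1$-cells $g \colon B \to B'$ define $Hg := \bar\alpha_{B'} \circ Gg \circ \alpha_B$ and similarly on $2$-cells. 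Equip $H$ with composition constraint $Hh \circ Hg \Rightarrow H(hg)$ given by cancelling the middle $\alpha_{A_{B'}}\bar\alpha_{A_{B'}}$ via $\epsilon_{A_{B'}}$, and unit constraint $1_{HB} \Rightarrow H(1_B)$ given by $\eta_B$; the pseudofunctor axioms follow from the triangle identities of the chosen equivalences together with the strict functoriality of $G$. Set $(\alpha_2)_B := \alpha_B$, obtaining a pseudonatural equivalence $\alpha_2 \colon H \Rightarrow G$ whose pseudonaturality 2-cells arise from the same cancellation. Define the invertible icon $\alpha_1 \colon F \Rightarrow HI$ at $f \colon A \to A'$ by the composite
\[
	Ff \Rightarrow \bar\alpha_{A'} \alpha_{A'} Ff \Rightarrow \bar\alpha_{A'} G(If) \alpha_A = H(If)
\]
of $\eta_{A'} \cdot Ff$ and $\bar\alpha_{A'} \cdot \alpha_f^{-1}$, where $\alpha_f$ is the pseudonaturality $2$-cell of $\alpha$.

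For (b), the retraction $\beta \circ \alpha = 1$ combined with $\alpha$'s being a pointwise equivalence forces $\beta$ to be itself an equivalence-inverse of $\alpha$, so that we may choose the data above with $\bar\alpha = \beta$ and $\eta = 1$. Then by $2$-naturality of $\beta$ together with $\beta \alpha = 1$, $H(If) = \beta_{A'} \circ G(If) \circ \alpha_A = Ff \circ \beta_A \circ \alpha_A = Ff$, giving $HI = F$ strictly and $\alpha_1 = 1$. When $g = If'$ is in the image of $I$, $2$-naturality of $\alpha$ gives $\alpha_{A'} \circ Ff' = G(If') \circ \alpha_A$, allowing us to rewrite $Hh \circ Hg$ so that the intervening $\alpha_{A'} \beta_{A'}$ is eliminated strictly; symmetrically when $h$ is in the image of $I$ one uses $2$-naturality of $\beta$. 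Either way the composition constraint is an identity, as required.

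For (c), with the additional hypothesis, substituting into
\[
	Hh \circ Hg = \beta_{A_{B''}} \circ Gh \circ (\alpha_{A_{B'}} \circ \beta_{A_{B'}}) \circ Gg \circ \alpha_{A_B}
\]
the cancellation $\beta_{A_{B''}} \circ Gh \circ \alpha_{A_{B'}} \circ \beta_{A_{B'}} = \beta_{A_{B''}} \circ Gh$ coming from~\eqref{eq:tobeatwofunctordiag} (applied with $g$, $B$, $C$ replaced by $h$, $A_{B'}$, $A_{B''}$) yields $\beta_{A_{B''}} \circ Gh \circ Gg \circ \alpha_{A_B} = H(hg)$, a strict equality; a parallel argument on $2$-cells makes $H$ a strict $2$-functor. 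The main obstacle is the bookkeeping in (a): assembling the pseudofunctoriality constraints from the equivalence data and verifying the coherence conditions for $H$, $\alpha_1$, and $\alpha_2$ simultaneously. Once (a) is in hand, the successive strictness added in (b) and (c) simplifies rather than complicates the verification, since each extra hypothesis replaces an invertible piece of data with a strict equality.
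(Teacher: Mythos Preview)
Your approach is essentially the same transport-of-structure argument as the paper's: choose adjoint-equivalence data for the components of $\alpha$, define $H$ on morphisms by conjugation $Hg = \bar\alpha_{B'} \circ Gg \circ \alpha_B$, and build $\alpha_1$, $\alpha_2$ from the equivalence data. Parts (a) and (b) are correct and match the paper closely.

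There is, however, a genuine gap in your argument for (c). You show that the hypothesis~\eqref{eq:tobeatwofunctordiag} forces the equality of $1$-cells $Hh \circ Hg = H(hg)$, and conclude ``a strict equality''. But this only shows that the pseudofunctoriality constraint $\mu_{h,g} = \beta_{B''} \circ Gh \circ \epsilon_{B'} \circ Gg \circ \alpha_B$ is an \emph{endomorphism} of the $1$-cell $H(hg)$; it does not show that this endomorphism is the identity $2$-cell. Unlike in (b), where one can slide $\alpha$ or $\beta$ past a morphism in the image of $I$ and then invoke a triangle identity to kill $\epsilon$ directly, for arbitrary $h$ the constraint still contains a genuine instance of the counit $\epsilon_{B'}$, and the hypothesis of (c) is an equation between $1$-cells, not $2$-cells. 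The paper closes this gap by whiskering the endomorphism $\beta_{B''} \circ Gh \circ \epsilon_{B'}$ on the right with the equivalence $\alpha_{B'}$: the triangle identity (with $\eta = 1$) gives $\epsilon_{B'} \circ \alpha_{B'} = 1_{\alpha_{B'}}$, so the whiskered $2$-cell is an identity, and since precomposition with an equivalence is faithful on $2$-cells, the original endomorphism is itself the identity. You should add this cancellation step; without it the argument for (c) is incomplete.
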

\begin{proof}
We start with (a). Without loss of generality we may assume that $I$ is in fact the identity on objects. Let $\mathrm{Hom}(\f B, \f C)$ denote the $2$-category of pseudofunctors, pseudonatural transformations and modifications from $\f B$ to $\f C$.  We have the forgetful $2$-functor $U \colon \mathrm{Hom}(\f B, \f C) \to [\mathrm{ob}\ \f B, \f C]$ and it is well-known that this admits the lifting of adjoint equivalences. Now for
each $X \in \mathrm{ob}\ \f A = \mathrm{ob}\ \f B$ we have the map $\alpha_X \colon FX \to GIX = GX$ which by assumption is an equivalence; so choosing an adjoint pseudoinverse $\beta_X$ for each $\alpha_X$, we obtain an adjoint equivalence $(\alpha, r) \colon \mathrm{ob}\ F \leftrightarrows  UG \colon (\beta, s)$ in $[\mathrm{ob}\ \f B, \f C]$. Lifting this along $U$ we obtain a pseudofunctor $H \colon \f B \rightsquigarrow \f C$ and pseudonatural equivalence $\alpha_2 \colon H \rightsquigarrow G$ with $U\alpha_2 = \alpha$. Explicitly, $H$ has the same action on objects as $F$, and on morphisms sends $g \colon A \to B$ in $\f B$ to the composite
\begin{equation*}
  FA \xrightarrow{\alpha_A} GA \xrightarrow{Gg} GB \xrightarrow{\beta_B} FB
\end{equation*}
obtained by conjugating $Gg$ by the adjoint equivalences. The action on $2$-cells is similar, whilst the unit and composition coherence constraints are obtained from the unit and counit maps $r_A$ and $s_A$ in the evident manner. As for the pseudonatural transformation $\alpha_2$, this has its $1$-cell components being those of $\alpha$ and its $2$-cell component at $g\colon A \to B$ in $\f B$ being given by the pasting composite
\begin{equation*}
\cd[@+1em]{
  FA \ar[r]^{\alpha_A} \ar[d]_{\alpha_A} & GA \ar[r]^{Gg} & GB \ar[r]^{\beta_B} \ar[dr]_1 & FB \dtwocell[0.35]{dl}{s_B}
  \ar[d]^{\alpha_B} \\
  GA \ar[rrr]_{Gg} & & & GB\rlap{ .}
}
\end{equation*}
This completes our description of $H$ and $\alpha_2$, and we now turn to $\alpha_1$. As indicated above, its $1$-cell components are identities; whilst at a morphism $f \colon A \to B$ of $\f A$ its $2$-cell component is given by the top row of the following pasting diagram:
\begin{equation*}
\cd[@+1em]{
  FA \ar@{=}[d] \ar[rr]^{Ff} \dtwocell{drr}{\alpha_f} & & FB \ar[d]_{\alpha_B} \ar[r]^{1} \dtwocell{dr}{r_B} & FB \ar@{=}[d] \\
  FA \ar[r]_{\alpha_A} \ar[d]_{\alpha_A} & GA \ar[r]_{GIf} & GB \ar[r]|{\beta_B} \ar[dr]_1 & FB \dtwocell[0.35]{dl}{s_B}
  \ar[d]^{\alpha_B} \\
  GA \ar[rrr]_{GIf} & & & GB\rlap{ .}
}
\end{equation*}
The bottom row of this diagram is $(\alpha_2I)_f$ and so the entire pasting constitutes the value of $(\alpha_2I \circ \alpha_1)_f$; which, since $r_B$ and $s_B$ cancel by the triangle identities, is equal to $\alpha_f$, so showing that $\alpha_2 I \circ \alpha_1 = \alpha$ as required. Note that we have not checked the coherence axioms for $\alpha_1$ but since its composite with the pseudonatural $\alpha_2 I$ is pseudonatural, the same is true of $\alpha_1$: \emph{pseudonatural equivalences detect pseudonaturality}.

This completes the proof of (a); as for (b), suppose now that $\alpha$ is in fact $2$-natural, and that the pseudoinverses $\beta_X$ chosen for each $\alpha_X$ above constitute a retraction for $\alpha$ in $[\f A, \f C]$. Then each $2$-cell $\alpha_f$ is the identity, as is each $r_B \colon 1 \to \beta_B \alpha_B$, so that each $2$-cell component of $\alpha_1$, being a composite of such $2$-cells, is itself the identity. Now given maps $f \colon A \to B$ in $\f B$ and $g \colon B \to C$ in $\f A$, we must show that the pseudofunctoriality constraint $HIg \circ Hf \to H(Ig \circ f)$ is the identity, or in other words, that the whiskering
\begin{equation*}
\cd{
  FA \ar[r]^{\alpha_A} & GA \ar[r]^{Gf} & GB \ar@/_2.5em/[rr]_{1} \ar[r]^{\beta_B} &
  FB \ar[r]^{\alpha_B} & GB \ar[r]^{GIg} & GC \ar[r]^{\beta_C} & FC \\
  & & & {} \dtwocell[0.55]{u}{s_B}
}
\end{equation*}
is the identity: which is so as $\beta_C \circ GIg \circ s_B = Fg \circ \beta_b \circ s_B = 1_{Fg}$. The argument for the dual case, where $f \colon A \to B$ in $\f A$ and $g \colon B \to C$ in $\f B$, is similar.

Finally, we prove (c). Given a retraction $\beta$ as before, let us suppose that its components make each diagram~\eqref{eq:tobeatwofunctordiag} commute; we will show that $H$ is then a $2$-functor. Since $F$ is a $2$-functor and $HI = F$, it already follows that $H$ preserves identities strictly. As for binary composition, we must show that for every $f \colon A \to B$ and $g \colon B \to C$ in $\f B$ the whiskering
\begin{equation*}
\cd{
  FA \ar[r]^{\alpha_A} & GA \ar[r]^{Gf} & GB \ar@/_2.5em/[rr]_{1} \ar[r]^{\beta_B} &
  FB \ar[r]^{\alpha_B} & GB \ar[r]^{Gg} & GC \ar[r]^{\beta_C} & FC \\
  & & & {} \dtwocell[0.55]{u}{s_B}
}
\end{equation*}
is an identity $2$-cell; we will show in fact that the composite 2-cell $\beta_C \circ Gg \circ s_B$ on the right of the whiskering is an identity.  Commutativity in~\eqref{eq:tobeatwofunctordiag} asserts that $\beta_C \circ Gg \circ s_B$ is an endomorphism; to show that it is the identity, we observe that $\beta_C \circ Gg \circ s_B \circ \alpha_B = 1_{\beta_C \circ Gg}  \circ \alpha_B$ by the triangle identities, and conclude that $\beta_C \circ Gg \circ s_B = 1_{\beta_C \circ Gg}$ since $\alpha_B$ is an equivalence.
\end{proof}

\subsection{Third characterisation}
We now turn to our final characterisation result, which is most easily motivated by the following question concerning limits: for which weights $W \in [\f J, \Cat]$ does the  weighted limit $2$-functor $\{W, \thg\} \colon [\f J, \f K] \to \f K$ send pointwise equivalences to equivalences  for every complete category $\f K$? 
A closely related question was considered by Par\'e~\cite{PareDouble}; his notion of \emph{persistent limit} concerns weights which display this good behaviour, but with respect to a double-categorical, and not $2$-categorical, notion of pointwise diagram equivalence. It was shown by Verity in~\cite{Verity1992Enriched} that
the persistent weights are precisely the flexible ones. 
However, for our purely $2$-categorical question, the class of weights answering to it turns out to be the larger class of semiflexibles; amongst which the flexibles are characterised by their also sending pointwise \emph{surjective} equivalences to surjective equivalences.

As is well-known, pointwise equivalences or surjective equivalences in a functor category $[\f J, \f K]$ need not be genuine ones; were this the case, then every weight would answer to the above characterisations, since $\{W, \thg\}$, like any $2$-functor,  preserves such equivalences. Yet the inclusion $\iota \colon [\f J, \f K] \to \Ps(\f J, \f K)$ sends pointwise equivalences or surjective equivalences to genuine ones: and so by asking that the $2$-functor $\{W, \thg\} \colon [\f J, \f K] \to \f K$ admit a suitable extension to $\Ps(\f J, \f K)$, as in Theorem~\ref{thm:char2weight}, it will follow that $\{W, \thg\}$ sends pointwise equivalences or pointwise surjective equivalences, as appropriate, to genuine ones; this is the core of our characterisation.

In order to give the corresponding results for semiflexible and flexible algebras, we need a notion corresponding to that of pointwise diagram equivalence. In the case that $\f K$ is cocomplete, we may view $[\f J, \f K]$ as the algebras for a $2$-monad on $\f C = [\mathrm{ob}\ \f J, \f K]$, and now the pointwise equivalences therein are equally well the algebra maps which become equivalences under the forgetful $2$-functor $U \colon \TAlgs \to \f C$. We may consider this same class of algebra maps for a general $2$-monad, calling them \emph{$U$-equivalences}: and the basic form of our result is now that an algebra $A$ is semiflexible just when $\TAlgs(A, \thg)$ sends $U$-equivalences to genuine ones.
Similarly, we have the notion of \emph{$U$-surjective equivalence}---algebra maps which become surjective equivalences on applying $U$---and with respect to these, the flexibles have a corresponding characterisation. 

Given a $2$-monad $T$ with rank on a locally presentable $2$-category $\f C$, it is shown in~\cite{Lack2007Homotopy-theoretic} that the $U$-equivalences and the $U$-surjective equivalences are the weak equivalences and trivial fibrations of a naturally-arising Quillen model structure on~\TAlgs. The corresponding cofibrant objects are the flexible algebras, and our characterisation of them as those objects for which $\TAlgs(A, \thg)$ sends $U$-surjective equivalences to surjective equivalences is now contained in Proposition~2.3 of that paper. 

We have so far not discussed the pie case; this can be understood in terms of the \emph{algebraic model structures} of~\cite{Riehl2011Algebraic}. The notion of algebraic model structure strengthens the classical one in a number of respects; one of these is that cofibrant replacement becomes a comonad. The model structure on $\TAlgs$ described in the previous paragraph can be made algebraic in such a way that the comonad in question is the $Q$ of our considerations: and now the coalgebras for this comonad, our pie algebras, are the \emph{algebraically cofibrant} objects of this model structure. These may be characterised by their bearing a coherent choice of liftings against the \emph{algebraic trivial fibrations}; and for the algebraic model structure on $\TAlgs$, an algebraic trivial fibration is composed of a $U$-surjective equivalence $f$ together with a chosen section of $Uf$---thus a witness to $f$'s being a $U$-surjective equivalence. It is to such maps that the pie case of our characterisation theorem for algebras will refer. 

\begin{Theorem}\label{thm:char3}
A $T$-algebra $A$ is semiflexible, flexible, or pie, just when, respectively:
\begin{enumerate}[(a)]
\item The hom $2$-functor $\TAlgs(A, \thg)$ sends $\us$-equivalences to equivalences;
\item The hom $2$-functor $\TAlgs(A, \thg)$ sends $\us$-surjective equivalences to surjective equivalences;
\item The hom $2$-functor $\TAlgs(A, \thg)$ coherently transports $\us$-split surjective equivalences to split surjective equivalences.
\end{enumerate}
\end{Theorem}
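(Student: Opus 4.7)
The plan is to complete the cycle of implications opened in Theorems~\ref{thm:char1} and~\ref{thm:char2}, by showing in each case that the corresponding clause of Theorem~\ref{thm:char2} implies that of the present theorem, and that the present theorem's clause implies the defining characterisation of the corresponding class recalled in Section~\ref{sec:pie}. A single lifting principle threads through both halves: if $f \colon B \to C$ is a strict algebra morphism whose underlying map $Uf$ is an equivalence, surjective equivalence, or split surjective equivalence in $\f C$, then a pseudoinverse, pseudosection, or chosen pseudosection for $Uf$ may be lifted to the corresponding kind of witness for $\iota f$ in $\TAlg$; this is standard transport of structure for pseudomorphisms.

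For the forward direction, suppose Theorem~\ref{thm:char2}(a) holds, giving a pseudofunctor $H \colon \TAlg \to \Cat$ and invertible icon $\theta \colon \TAlgs(A, \thg) \cong H\iota$. Given a $U$-equivalence $f$ in $\TAlgs$, the lifting principle makes $\iota f$ an equivalence in $\TAlg$, so $Hf$ is an equivalence in $\Cat$, whence by $\theta$ so is $\TAlgs(A, f)$. Under Theorem~\ref{thm:char2}(b), $\theta$ is an identity and pseudofunctoriality constraints involving a strict factor are identities; so if $f$ is a $U$-surjective equivalence with lifted pseudosection $g$ satisfying $f \circ g = 1$, then $\TAlgs(A, f) \circ Hg = Hf \circ Hg = H(1) = 1_{HC}$ gives the required section. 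In the pie case $H$ is a $2$-functor, so the assignment $(f,s) \mapsto (\TAlgs(A,f), Hg)$, with $g$ the canonical lift of $s$, is itself functorial, delivering the coherent transport.

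For the reverse direction, we apply each clause to the map $p_A \colon QA \to A$. Since $p_A$ is a surjective equivalence in $\TAlg$ with pseudoinverse $\qun_A$, its underlying map is a split surjective equivalence in $\f C$ with section $U\qun_A$; thus $p_A$ is a $U$-split surjective equivalence, and in particular a $U$-surjective equivalence and $U$-equivalence. Clause (a) applied to $p_A$ produces a pseudosection of $p_A$ in $\TAlgs$, so $A$ is semiflexible; clause (b) produces a strict section, so $A$ is flexible. For (c), the coherent transport applied to $(p_A, U\qun_A)$ yields a strict section $a \colon A \to QA$, and the coalgebra comultiplication axiom $\Delta_A \circ a = Qa \circ a$ should emerge from functoriality of the transport, by comparing both composites with the canonical section furnished at $(p_{QA}, U\qun_{QA})$ and using that $\Delta_A$ is uniquely determined by the equations $p_{QA} \circ \Delta_A = 1 = Qp_A \circ \Delta_A$.

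The main obstacle is formalising ``coherently transports'' precisely enough for this last step: the right framework is to view $U$-split surjective equivalences into algebras as the objects of a category over $\TAlgs$, and to demand a functor from it to the corresponding category of split surjective equivalences in $\Cat$ over $\TAlgs(A,\thg)$. Under this reading, the forward pie implication is immediate from $H$'s being a $2$-functor, while the reverse one is essentially a naturality calculation with the canonical algebraic trivial fibrations of~\cite{Riehl2011Algebraic}; checking that the pseudosections lifted in the forward implications genuinely assemble into such a functor is routine but requires care with the transport-of-structure choices.
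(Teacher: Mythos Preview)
Your overall architecture matches the paper's exactly: complete the cycle by deriving each clause here from the corresponding clause of Theorem~\ref{thm:char2}, and then derive semiflexibility, flexibility, or pie-ness by applying the clause to $p_A \colon QA \to A$. The forward direction and the reverse directions for (a) and (b) are essentially as in the paper.

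The gap is in clause~(c). First, your proposed formalisation of ``coherently transports'' as a functor between categories of split surjective equivalences over $\TAlgs$ is not quite the right one. The paper's definition is a function $\varphi$ subject to \emph{two} axioms: a composition axiom $\varphi_{gf}(k\ell) = \varphi_f(k)\circ\varphi_g(\ell)$, and a normalisation axiom $\varphi_f(Uk) = \TAlgs(A,k)$ whenever $k$ is already a \emph{strict} section of $f$. Your functorial framework captures something like the first axiom but not obviously the second, and the second is essential.

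Second, your sketch of the coassociativity argument is not the mechanism that works. The remark that ``$\Delta_A$ is uniquely determined by $p_{QA}\circ\Delta_A = 1 = Qp_A\circ\Delta_A$'' is misleading: full faithfulness of $p_{QA}$ does not give uniqueness of $1$-cells satisfying such equations, and in any case this is not how the paper proceeds. What the paper does is write down two equalities of \emph{composed} $U$-split surjective equivalences, using naturality of $p$ and of $q$: one identifies $(p_A, Uq_A)\circ(p_{QA}, Uq_{QA})$ with $(p_A, Uq_A)\circ(Qp_A, UQq_A)$, the other identifies $(p_A, Ua)\circ(p_{QA}, Uq_{QA})$ with $(p_A, Uq_A)\circ(Qp_A, UQa)$. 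Applying the composition axiom to each, and the normalisation axiom to the factors $Qq_A$, $a$, $Qa$ (which are genuine strict sections), one obtains two commuting squares; evaluating both at $1_A$ yields $\Delta_A\circ a = \varphi(Uq_{QA})(a) = Qa\circ a$. Note also that the transport applied to $(p_A, Uq_A)$ does not directly produce $a$: it produces a section of $\TAlgs(A,p_A)$, and $a$ is its value at $1_A$.
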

In clauses (a) and (b), the $U$-equivalences and $U$-surjective equivalences are as before those algebra maps which $U$ sends to equivalences or surjective equivalences; but these are equally well, as in Proposition~4.10 of~\cite{Lack2007Homotopy-theoretic}, those maps whose image under $\iota \colon \TAlgs \to \TAlg$ is an equivalence or surjective equivalence. As for clause (c) we say that a 2-functor $F \colon \TAlgs \to \f C$ coherently transports $\us$-split surjective equivalences to split surjective equivalences if it sends $U$-surjective equivalences to surjective equivalences and if, furthermore, there exists a function $\varphi$ which to each $\us$-surjective equivalence $f \colon B \to C$ in $\TAlgs$ and each splitting $k \colon \us B \to \us A$ for $\us f$ assigns a splitting $\varphi_f(k) \colon FC \to FB$ for $Ff$, all subject to two axioms: firstly, that $\varphi_f(\us k) =Fk$ whenever $k$ is a splitting for $f$ in $\TAlgs$, and secondly, that $\varphi_{gf}(k\ell) = \varphi_f(k) \circ \varphi_g(\ell)$ whenever this makes sense.
\begin{proof}
We will show that clauses (a), (b) and (c) of Theorem~\ref{thm:char2} imply the corresponding clauses of the current result; then we show that these clauses in turn imply that an algebra is respectively semiflexible, flexible or pie. This will complete our cycle of implications.

As remarked above, if $f \colon B \to C$ is a $\us$-equivalence in $\TAlgs$ then $\iota(f)$ is an equivalence in $\TAlg$.  If $f$ is a $\us$-surjective equivalence with splitting $k \colon \us C \to \us B$ in $\f C$, fully faithfulness of $\us f$ ensures that there exists a unique pseudomorphism structure on this $k$ making it into a section $\bar k \colon C \rightsquigarrow B$ for $\iota f$.  It follows from the uniqueness of this correspondence that the function $\varphi_f(k)= \bar k$ exhibits the inclusion $\iota \colon \TAlgs \to \TAlg$ as coherently transporting $\us$-split surjective equivalences to split surjective equivalences.

Now suppose that Theorem~\ref{thm:char2}(a) holds, so that $\TAlgs(A, \thg)$ admits an extension $(H, \theta)$ as in~\eqref{eq:rextension}. If $f \colon B \to C$ is a $\us$-equivalence in $\TAlgs$, then $\iota(f)$ is an equivalence in $\TAlg$, and hence $H\iota(f)$ is one in $\Cat$. But $\TAlgs(A, f) \cong H\iota(f)$ via $\theta_f$ and so $\TAlgs(A, f)$ is an equivalence in $\Cat$ also. This verifies clause (a) of the current result.

Next suppose that Theorem~\ref{thm:char2}(b) holds; we must show that $\TAlgs(A, \thg)$ sends each $\us$-surjective equivalence $f \colon B \to C$ to a surjective equivalence. Certainly $\TAlgs(A, f)$ is an equivalence by the case just proved, and it remains to show that it admits a section. Let $\us f$ admit a section $k \colon \us C \to \us B$ with $\bar k \colon C \rightsquigarrow B$ as above the corresponding section for $\iota f$. We now calculate that $H\iota f \circ H\bar k = H(\iota f \circ \bar k) = H(1_C) = H\iota(1_C) = \TAlgs(A, 1_C) = 1$, so that $H\bar k$ is a section for $H\iota f = \TAlgs(A, f)$, as required.

Finally, suppose that Theorem~\ref{thm:char2}(c) holds; we must then show that $\TAlgs(A, \thg)$ coherently transports $\us$-split surjective equivalences to split surjective equivalences.  The inclusion $\iota \colon \TAlgs \to \TAlg$ does so, with associated function $\varphi_f(k)= \bar k$, and it follows, as with any 2-functor based on $\TAlg$, that $H\iota=\TAlgs(A, \thg)$ has the same transport property, now with associated function $\varphi_f(k)= H \bar k$.

It remains to show that clauses (a), (b) and (c) of the current result imply that an algebra $A$ is semiflexible, flexible or pie respectively. So suppose first that $\TAlgs(A, \thg) \colon \TAlgs \to \Cat$ sends $\us$-equivalences to equivalences. Observe that $p_A \colon QA \to A$ is a $\us$-equivalence, since $U\qun_A \colon UA \to UQA$ is a pseudoinverse for it in $\f C$; and hence $\TAlgs(A, p_A) \colon \TAlgs(A, QA) \to \TAlgs(A,A)$ is an equivalence in $\Cat$. In particular its essential surjectivity means that we can find $a \in \TAlgs(A, QA)$ with $p_A \circ a \cong 1_A$; whence $A$ is semiflexible.

Suppose now that $\TAlgs(A, \thg)$ sends $\us$-surjective equivalences to surjective equivalences. The map $p_A \colon QA \to A$ is in fact a $\us$-surjective equivalence, since $Uq_A$ is a splitting for $Up_A$, and so $\TAlgs(A, p_A)$ is in this case a surjective equivalence in $\Cat$. Now surjectivity allows us to find $a \colon A \to QA$ with $p_A \circ a = 1_A$, so that $A$ is flexible.

Finally, suppose that $\TAlgs(A, \thg)$ comes equipped with a function
$\varphi$ which coherently transports $\us$-split surjective equivalences in
$\TAlgs$ to split surjective equivalences in $\Cat$. Then to the splitting
$\us\qun_A \colon \us A \to \us QA$ of $\us p_A$ we have assigned a splitting
$\varphi(\us\qun_A) \colon \TAlgs(A, A) \to \TAlgs(A, QA)$ of $\TAlgs(A, p_A)$, and we claim that the morphism $a =
\varphi(\us\qun_A)(1_A) \colon A \to QA$ exhibits $A$ as a strict $Q$-coalgebra.
Clearly we have $p_A \circ a = 1_A$ and so it remains to prove coassociativity.
To obtain this, consider the following two equalities of $\us$-split surjective
equivalences in $\TAlgs$:
\begin{align*}
  \cd{
  QQA \ar[r]_{p_{QA}} \ar@{<~}@/^1.8em/[r]^{\us\qun_{QA}} & QA \ar[r]_{p_A} \ar@{<~}@/^1.8em/[r]^{\us\qun_A} & A
  } \quad &= \quad
\cd{
  QQA \ar[r]_{Qp_A} \ar@{<-}@/^1.8em/[r]^{\us Q\qun_{A}} & QA \ar[r]_{p_A} \ar@{<~}@/^1.8em/[r]^{\us\qun_A} & A}
  \\
 \text{and} \qquad \cd{
  QQA \ar[r]_{p_{QA}} \ar@{<~}@/^1.8em/[r]^{\us\qun_{QA}} & QA \ar[r]_{p_A} \ar@{<-}@/^1.8em/[r]^{\us a} & A
  } \quad &= \quad
\cd{
  QQA \ar[r]_{Qp_{A}} \ar@{<-}@/^1.8em/[r]^{\us Qa} & QA \ar[r]_{p_A} \ar@{<~}@/^1.8em/[r]^{\us\qun_A} & A
  }
\end{align*}
where the left pointing arrows are the splittings in \f{C}, and drawn strictly if they underlie a morphism of \TAlgs.  Applying the transport function $\varphi$ to each of these, and using its coherence, we obtain from the first equality the commutative square below on the left, and from the second, that on the right:
\begin{equation*}
\cd[@C+2em]{
  \TAlgs(A, A) \ar[d]_{\varphi(\us\qun_A)} \ar[r]^-{\varphi(\us\qun_A)} &
  \TAlgs(A, QA) \ar[d]^{\varphi(\us\qun_{QA})} &
  \TAlgs(A, A) \ar[d]^{\varphi(\us\qun_A)} \ar[l]_-{a \circ (\thg)} \\
  \TAlgs(A, QA) \ar[r]_-{Q\qun_A \circ (\thg)} &
  \TAlgs(A, QQA) &
  \TAlgs(A, QA) \ar[l]^-{Qa\circ (\thg)}
}
\end{equation*}
Now evaluating both commutative squares at $1_A \in \TAlgs(A,A)$ we obtain from the left one
the equality $Q\qun_A \circ a = \varphi(\us\qun_{QA})(a)$ and from the right one the equality
$Qa \circ a = \varphi(\us\qun_{QA})(a)$; whence $Qa \circ a = Q\qun_A \circ a = \Delta_A \circ a$ and so $a \colon A \to QA$ is a strict coalgebra as required.
\end{proof}

Finally, we give the corresponding version of the above theorem for the case of weights. In the third clause, a \emph{pointwise split surjective equivalence} is a pointwise surjective equivalence equipped with a chosen section of each component. We note that if $\f K$ is cocomplete, so that $[\f J, \f K]$ is $\TAlgs$ for a $2$-monad $T$ on $[\mathrm{ob}\ \f J, \f K]$, then such maps are precisely the $U$-split surjective equivalences for this $T$. The notion of coherent transport of pointwise split surjective equivalences may be transcribed directly from the corresponding transport notion for $U$-split surjective equivalences, and is again a special case of it when $\f K$ is cocomplete.
\begin{Theorem}\label{thm:char3weight}
A weight $W \in [\f J, \Cat]$ is semiflexible, flexible, or pie, just when, respectively:
\begin{enumerate}[(a)]
\item For all complete $2$-categories $\f K$, the $2$-functor $\{W, \thg\}  \colon [\f J, \f K] \to \f K$ sends pointwise equivalences to equivalences;
\item For all complete $2$-categories $\f K$, the $2$-functor $\{W, \thg\}  \colon [\f J, \f K] \to \f K$ sends pointwise surjective equivalences to surjective equivalences;
\item For all complete $2$-categories $\f K$, the $2$-functor $\{W, \thg\}  \colon [\f J, \f K] \to \f K$ coherently transports pointwise split surjective equivalences to split surjective equivalences.
\end{enumerate}
\end{Theorem}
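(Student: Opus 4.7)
The plan is to close the cycle of implications initiated by Theorems~\ref{thm:char1weight} and~\ref{thm:char2weight}, by establishing that clauses (a), (b) and (c) of Theorem~\ref{thm:char2weight} entail the respective clauses of the current theorem, and that these latter clauses in turn entail the semiflexibility, flexibility and pie-ness of $W$. The argument should mirror the proof of Theorem~\ref{thm:char3}, with the hom $2$-functor $\TAlgs(A, \thg)$ systematically replaced by the weighted limit $2$-functor $\{W, \thg\} \colon [\f J, \f K] \to \f K$.

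For the forward half of the cycle, the key preliminary observation is that the inclusion $\iota \colon [\f J, \f K] \to \Ps(\f J, \f K)$ coherently transports pointwise split surjective equivalences to genuine ones. Indeed, given a pointwise surjective equivalence $g \colon A \to B$ with chosen sections $k_j \colon B_j \to A_j$, the $2$-naturality of $g$ together with the fully faithfulness of each $g_j$ allows one to lift the identity $2$-cell at $B\alpha$ uniquely through $g_j$ to an invertible $2$-cell $k_j \circ B\alpha \to A\alpha \circ k_i$ for each $\alpha \colon i \to j$ in $\f J$; these comprise the structure $2$-cells of a pseudonatural $\bar g \colon B \rightsquigarrow A$ which is the unique pseudo-section of $\iota g$ with prescribed $1$-cell components $(k_j)$. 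By the same uniqueness, this construction agrees with $\iota k$ when $k$ is already $2$-natural, and $\overline{g'g}$ with section $k\circ k'$ equals $\bar g \circ \bar{g'}$. Given this, the arguments from the proof of Theorem~\ref{thm:char3} transfer essentially verbatim: an extension $(H, \theta)$ of $\{W, \thg\}$ of the appropriate strictness granted by Theorem~\ref{thm:char2weight} yields that $\{W, g\}$ is an equivalence (in the semiflexible case, via the invertible icon $\theta$), admits $H\bar g$ as a section (in the flexible case, using that $H$ strictly preserves composition with strict maps and identities), or even comes equipped with a coherent choice $\varphi_g(k) := H\bar g$ of such sections (in the pie case, where $H$ is a strict $2$-functor).

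For the backward half of the cycle, I would reduce to Theorem~\ref{thm:char3} by specialising to $\f K = \Cat$. Under the identification of $W$ as a strict algebra for the weight $2$-monad $T$ on $[\mathrm{ob}\ \f J, \Cat]$, one has $\TAlgs = [\f J, \Cat]$, the hom $2$-functor $\TAlgs(W, \thg)$ coincides with $\{W, \thg\} \colon [\f J, \Cat] \to \Cat$, and the $U$-equivalences, $U$-surjective equivalences and $U$-split surjective equivalences in $\TAlgs$ are precisely the pointwise ones in $[\f J, \Cat]$. Therefore the $\f K = \Cat$ instance of each clause of the present theorem specialises to the corresponding clause of Theorem~\ref{thm:char3} for the algebra $W$, from which semiflexibility, flexibility or pie-ness follows directly.

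The most delicate step will be the coherence verification in the pie case. This amounts to checking the two axioms demanded of the function $(g, k) \mapsto H\bar g$: that it agrees with $H$ on sections which are already $2$-natural, and that it respects composition of split surjective equivalences. Both reduce to the uniqueness of pseudo-lifts against a fully faithful map, and so are routine, but they crucially require the strictness of $H$ granted by Theorem~\ref{thm:char2weight}(c); mere pseudofunctoriality would introduce comparison cells which would break the strict equalities demanded by the axioms for $\varphi$.
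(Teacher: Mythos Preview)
Your proposal is correct and follows essentially the same approach as the paper: the forward half transfers the argument of Theorem~\ref{thm:char3} verbatim to the limit setting via the extension of Theorem~\ref{thm:char2weight}, and the backward half specialises to $\f K = \Cat$ and invokes Theorem~\ref{thm:char3} directly. One small imprecision: the hom $2$-functor $[\f J,\Cat](W,\thg)$ and the limit $2$-functor $\{W,\thg\}$ are naturally isomorphic rather than literally equal, but this causes no difficulty, as the three properties in question are manifestly invariant under natural isomorphism.
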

\begin{proof}
The argument that Theorem~\ref{thm:char2weight}'s clauses (a), (b) and (c) imply those of the present result is exactly as in the preceding proof. That these in turn imply that a weight $W \in [\f J, \Cat]$ is semiflexible, flexible or pie is now a direct consequence of Theorem~\ref{thm:char3}. Indeed, if we view $[\f J, \Cat]$ as $\TAlgs$ for the weight $2$-monad $T$ on $[\mathrm{ob}\ \f J, \Cat]$, then the pointwise equivalences and their two variants become for this $T$ the $U$-equivalences and their corresponding variants. Moreover, there is a natural isomorphism $\{W, \thg\} \cong [\f J, \Cat](W, \thg) \colon [\f J, \Cat] \to \Cat$, and so clauses (a), (b) and (c) of the present theorem may be identified for this $T$ with the corresponding clauses of Theorem~\ref{thm:char3}. Thus assuming each to hold in turn, we apply that result to deduce that $W$ is, respectively, a semiflexible, flexible or pie algebra; that is, a semiflexible, flexible or pie weight.
\end{proof}

Let us conclude this section by remarking upon some further properties of flexible algebras and weights which do not quite fit into our series of characterisations.  In Theorem~\ref{thm:char3}(b) we characterised the flexibles as those algebras $A$ for which the hom 2-functor $\TAlgs(A, \thg)$ sends $\us$-surjective equivalences to surjective equivalences.  The argument by which we proved a flexible algebra $A$ to have this property required only that $\TAlgs(A, \thg)$ admit an extension $H \colon \TAlgs \to \Cat$ preserving identities strictly and satisfying $H(g \circ f)=H(g) \circ H(f)$ whenever $g$ is strict.  But in fact Theorem~\ref{thm:char2}(b) also asserts that $H(g \circ f)=H(g) \circ H(f)$ whenever $f$ is strict; and thus by a corresponding argument we conclude that \emph{if $A$ is flexible then the 2-functor $\TAlgs(A, \thg)$ sends $\us$-injective equivalences to injective equivalences}.  It seems unlikely that the converse of this implication holds in general; but it does at least in the case of weights, as the following theorem shows.
\begin{Theorem}\label{thm:strangeweights}
A weight $W \in [\f J, \Cat]$ is flexible if and only if for all complete $2$-categories $\f K$, the $2$-functor $\{W, \thg\}  \colon [\f J, \f K] \to \f K$ sends pointwise injective equivalences to injective equivalences.
\end{Theorem}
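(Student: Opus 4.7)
The forward direction is essentially the argument sketched in the remarks immediately preceding the theorem: by Theorem~\ref{thm:char2weight}(b), flexibility of $W$ yields, for each complete $\f K$, an extension $H \colon \Ps(\f J, \f K) \to \f K$ of $\{W, \thg\}$ whose pseudofunctoriality constraints are identities whenever either argument is strict. Given a pointwise injective equivalence $\alpha \colon D \to D'$ in $[\f J, \f K]$ with pointwise retractions $r_{j}$ satisfying $r_{j} \alpha_{j} = 1$, I would use the invertible $2$-cells $\alpha_{j} r_{j} \cong 1$ to promote the family $(r_{j})$ to a pseudonatural transformation $r \colon D' \rightsquigarrow D$ still satisfying $r \circ \alpha = 1_{D}$ strictly in $\Ps(\f J, \f K)$. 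Applying $H$ and exploiting its strict-on-strict property then gives $H(r) \circ \{W, \alpha\} = H(r \circ \alpha) = H(1_{D}) = 1$, so $\{W, \alpha\}$ is an injective equivalence as required.

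For the converse, my plan is to use the freedom to choose $\f K$ so as to reduce to the flexibility characterisation of Theorem~\ref{thm:char3weight}(b). The key move is to apply the hypothesis at $\f K = \Cat^{\op}$, which is complete since $\Cat$ is cocomplete. Under the canonical $2$-equivalence $[\f J, \Cat^{\op}] \cong [\f J^{\op}, \Cat]^{\op}$, pointwise injective equivalences in $[\f J, \Cat^{\op}]$ correspond to pointwise surjective equivalences in $[\f J^{\op}, \Cat]$ with the direction reversed, and $\{W, \thg\}_{\Cat^{\op}}$ corresponds, via the standard identification between limits in $\f L^{\op}$ and colimits in $\f L$, to the $W$-weighted colimit functor $W \star (\thg) \colon [\f J^{\op}, \Cat] \to \Cat$. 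The hypothesis therefore yields that $W \star (\thg)$ preserves pointwise surjective equivalences.

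To finish, I would establish and invoke the colimit-analogue of Theorem~\ref{thm:char3weight}(b), asserting that a weight $W$ is flexible if and only if the functor $W \star (\thg) \colon [\f J^{\op}, \Cat] \to \Cat$ preserves pointwise surjective equivalences; more generally, the same preservation property at any cocomplete $\f K$. The forward direction of this dual is brief: when $W$ is flexible, $W \star (\thg)$ is a retract, in the functor $2$-category, of the pseudo-colimit functor $QW \star (\thg)$, which preserves pointwise equivalences. The converse—which is what we actually need—is the main obstacle: I would identify $p_{W} \colon QW \to W$ with $W \star \alpha$ for a suitable pointwise surjective equivalence $\alpha \colon R \to y_{*}$ in $[\f J^{\op}, [\f J, \Cat]]$, where $y_{*} \colon \f J^{\op} \to [\f J, \Cat]$, $j \mapsto \f J(j, \thg)$, is the covariant Yoneda (so $W \star y_{*} = W$ by co-Yoneda) and $R$ comes from the codescent realisation of $Q$ from~\cite{Lack2002Codescent} (so $W \star R = QW$). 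Applying the preservation property now at $\f K = [\f J, \Cat]^{\op}$ to this $\alpha$ forces $p_{W}$ to be a surjective equivalence in $[\f J, \Cat]$, giving the strict section of $p_{W}$ which exhibits $W$ as flexible.
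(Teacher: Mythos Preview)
Your argument is correct and follows the same route as the paper: dualise the hypothesis to the assertion that $W \star (\thg)$ sends pointwise surjective equivalences to surjective equivalences for every cocomplete $\f K$, then at $\f K = [\f J,\Cat]$ apply this to $p \circ Y \colon Q \circ Y \to Y$---a pointwise surjective equivalence since representables are flexible---to obtain a surjective equivalence $QW \to W$ and hence flexibility of $W$. The one place the paper is more direct is in justifying $W \star (Q \circ Y) \cong QW$: rather than invoking the codescent description of $Q$, it simply observes that $Q$ is cocontinuous, since the inclusion $\iota \colon [\f J,\Cat] \to \Ps(\f J,\Cat)$ admits a \emph{right} adjoint as well as a left one (as in~\cite{Gambino2008Homotopy}), making $Q$ a composite of left adjoints; your detour through $\f K = \Cat^{\op}$ is then unnecessary, as one may go straight to $\f K = [\f J,\Cat]$.
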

\begin{proof}
If $W$ is flexible then for each complete $\f K$ the limit 2-functor $\{W, \thg\}\colon [\f J, \f K] \to \f K$ admits, by Theorem~\ref{thm:char2weight}, an extension $H \colon \Ps(\f J, \f K) \to \f K$ preserving identities strictly and satisfying $H(g \circ f)=H(g) \circ H(f)$ whenever $f$ or $g$ is strict.  Just as in the preceding discussion, an extension of this form ensures that $\{W, \thg\}  \colon [\f J, \f K] \to \f K$ sends pointwise injective equivalences to injective equivalences.

Conversely suppose the limit 2-functor $\{W, \thg\}  \colon [\f J, \f K] \to \f K$ has this property for each complete $\f K$.  This is easily seen to be equivalent to the assertion that the colimit 2-functor $W \star \thg \colon [\f J^{\op}, \f K] \to \f K$ sends pointwise surjective equivalences to surjective equivalences for each \emph{cocomplete} $\f K$.  Let $\f K$ be the cocomplete 2-category $[\f J,\Cat]$ and consider the comonad $Q$ thereon, induced by the inclusion $\iota \colon [\f J,\Cat] \to \Ps(\f J,\Cat)$ and its left adjoint. That left adjoint exists on replacing $\Cat$ by any cocomplete $2$-category; in particular, on replacing $\Cat$ by $\Cat^\op$, from which it follows that $\iota$ admits also a \emph{right} $2$-adjoint; this was observed in~\cite[Section~4.2]{Gambino2008Homotopy}. Thus the comonad $Q$ is a composite of left adjoint $2$-functors and as such is cocontinuous.

Now by definition a component of the counit $p \colon Q \to 1$ at a weight is a surjective equivalence just when that weight is flexible.  Each representable is flexible so that precomposing $p$ by the Yoneda embedding $Y \colon \f J^{\op} \to [\f J,\Cat]$ yields a pointwise surjective equivalence $p \circ Y \colon Q \circ Y \to Y$ in $[\f J^{\op},[\f J,\Cat]]$.  Taking the image of $p \circ Y$ under $W \star \thg \colon [\f J^{\op}, [\f J,\Cat]] \to [\f J, \Cat]$ we consequently obtain a surjective equivalence $W \star (Q \circ Y) \to W \star Y$.  On the one hand the Yoneda lemma asserts that $W \star Y \cong W$; on the other hand cocontinuity of $Q$ ensures that $W \star (Q \circ Y) \cong Q(W \star Y) \cong QW$. Combining these isomorphisms with the above surjective equivalence now yields a surjective equivalence $QW \to W$; whence $W$, as a retract of $QW$, is flexible.
\end{proof}

\section{Closure properties}\label{sec:closure}
In this section, we continue our study of the semiflexible, flexible and pie algebras for a $2$-monad $T$ by considering the closure properties of these three classes of algebras. Once again, we remind the reader of our standing assumptions that $T$ be a $2$-monad with rank on a complete and cocomplete $2$-category $\f C$. 

\begin{Proposition}\label{prop:pie}
The semiflexible, flexible and pie algebras each contain the frees, and are each closed in $\TAlgs$ under the corresponding class of weighted colimits.
\end{Proposition}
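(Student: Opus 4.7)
The plan is to treat all three claims uniformly by exploiting the description already established in the paper of the semiflexible, flexible and pie algebras as pseudo, normalised pseudo and strict coalgebras, respectively, for the 2-comonad $Q$ on $\TAlgs$. First, I would show that every free $T$-algebra $FY$ lies in the pie class (and hence in the other two). For this, observe that $FY$ is canonically a cofree $FU$-coalgebra with coaction $F\eta_Y \colon FY \to FUFY$; by Lemma~\ref{cor:comonadmap}, $\rho \colon FU \to Q$ is a morphism of comonads on $\TAlgs$, so the composite $\rho_{FY} \circ F\eta_Y \colon FY \to QFY$ is a strict $Q$-coalgebra structure on $FY$. The coalgebra axioms follow by pasting the $FU$-coalgebra axioms for $F\eta_Y$ with the comonad-morphism axioms for $\rho$.

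For closure under colimits I would invoke the general principle that, for a 2-comonad $G$ on a 2-category, the 2-categories of strict, normalised pseudo and pseudo $G$-coalgebras are closed under those colimits that $G$ preserves, appropriately interpreted in the pseudo and normalised pseudo cases. Now the 2-comonad $Q$ on $\TAlgs$ factors as
\[ \TAlgs \xrightarrow{\iota} \TAlg \xrightarrow{Q} \TAlgs \rlap{ ,}\]
and its second factor is a left 2-adjoint, so preserves every colimit. The question therefore reduces to which colimits in $\TAlgs$ are preserved by $\iota$, and I would appeal to the results of \cite{Blackwell1989Two-dimensional} and \cite{Bird1989Flexible} which give that $\iota$ preserves pie colimits (with corresponding pseudo and bicategorical refinements for the flexible and semiflexible classes). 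Concretely, given a pie weight $W$ and a diagram $D$ of pie algebras with coactions $a_j \colon D_j \to QD_j$, we form $L = W \star D$ in $\TAlgs$; since $Q$ preserves this colimit we have $QL \cong W \star QD$, and the $a_j$ assemble into a cocone under $D$ with vertex $QL$, inducing the required strict $Q$-coalgebra map $a \colon L \to QL$. The coalgebra axioms for $a$ reduce by the universal property to those for each $a_j$. The flexible and semiflexible cases run along the same lines, the only difference being the coalgebra data one must transport.

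The main obstacle is the preservation claim for $\iota$ in each case together with the faithful transport of (normalised) pseudocoalgebra coherence data across the colimit. The semiflexible case is the most delicate, as the coalgebra structure is only up to equivalence and one must know that semiflexible colimits are well-behaved with respect to equivalences of diagrams; this ultimately rests on the characterization of semiflexible weights supplied by Corollary~\ref{cor:semiflexbi}, which tells us that limits (and so, dually, colimits) weighted by such weights are bilimits, and hence invariant under the equivalences relating a semiflexible algebra to the corresponding $QB$.
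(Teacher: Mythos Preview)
Your treatment of the free algebras is essentially the same as the paper's: both use the comonad morphism $\rho \colon FU \to Q$ from Lemma~\ref{cor:comonadmap} to push the canonical $FU$-coalgebra structure on $FY$ forward to a strict $Q$-coalgebra structure.

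For closure under colimits, however, there is a genuine gap. The ``general principle'' you invoke---that the $2$-category of (pseudo, normalised pseudo, strict) $G$-coalgebras is closed under colimits preserved by $G$---concerns colimits of diagrams \emph{of coalgebra morphisms}. But the statement to be proved is about closure of a class of \emph{objects} of $\TAlgs$ under colimits of arbitrary diagrams in $\TAlgs$ whose vertices happen to lie in that class. When you say ``the $a_j$ assemble into a cocone under $D$ with vertex $QL$'', you are tacitly assuming that the chosen coactions $a_j \colon D_j \to QD_j$ constitute a natural transformation $D \Rightarrow QD$; equivalently, that each $Df$ is a $Q$-coalgebra morphism for the chosen structures. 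Nothing in the hypotheses guarantees this, and in general it fails: a map in $\TAlgs$ between pie algebras need not be compatible with any particular choice of $Q$-coalgebra structures on its source and target. The same obstruction arises, mutatis mutandis, in the flexible and semiflexible cases.

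The paper sidesteps this entirely by using Theorem~\ref{thm:char3}, which characterises each class in terms of the behaviour of the covariant hom $\TAlgs(A,\thg)$. Closure then follows from the isomorphism
\[
\TAlgs(W \star D,\thg) \cong [\f J, \Cat]\bigl(W,\ \TAlgs(D\thg, -)\bigr)\rlap{ ,}
\]
together with the fact that each $\TAlgs(Dj,\thg)$ has the required preservation property (because each $Dj$ lies in the class) and that $[\f J, \Cat](W,\thg)$ does too (because $W$ lies in the corresponding class of weights, via Theorem~\ref{thm:char3weight}). This argument never needs to choose coalgebra structures, let alone make them cohere across the diagram, which is precisely what your approach cannot supply.
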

\begin{proof}
The free-forgetful adjunction $\fs \colon \f C \leftrightarrows \TAlgs \colon \us$ generates the comonad $\fs \us$ on $\TAlgs$, and clearly every free $T$-algebra bears $\fs \us$-coalgebra structure. But as in Corollary~\ref{cor:comonadmap}, we have a comonad map $\fs \us \to Q$, whence every free $T$-algebra bears strict $Q$-coalgebra structure, and is thus pie, flexible and semiflexible.

As for the closure under colimits, we consider only the semiflexible case, the arguments in the other two cases being identical in form. So let $W \in [\f J, \Cat]$ be a semiflexible weight, and $D \colon \f J^\op \to \TAlgs$ a $2$-functor taking values in semiflexible algebras; we must show that the colimit $W \star D$ in $\TAlgs$ is again semiflexible. By Theorem~\ref{thm:char3}, it suffices for this to show that $\TAlgs(W \star D, \thg) \colon \TAlgs \to \Cat$ takes $U$-equivalences to equivalences. Now by definition of weighted colimit, we have a $2$-natural isomorphism
\begin{equation*}
\TAlgs(W \star D, \thg) \cong [\f J, \Cat](W?, \TAlgs(D?, \thg))\rlap{ ,}
\end{equation*}
and so it is enough to show that the $2$-functor on the right-hand side takes $U$-equivalences to equivalences. But as each $DX$ is semiflexible, the functor $\TAlgs(D?, \thg) \colon \TAlgs \to [\f J, \Cat]$ takes $U$-equivalences to pointwise equivalences; and as $W$ is semiflexible, the functor $[\f J, \Cat](W, \thg) \colon [\f J, \Cat] \to \Cat$ takes pointwise equivalences to equivalences. Combining these two facts yields the result.
\end{proof}
Specialising to the case of weights yields the following corollary. In light of Proposition~\ref{prop:pieweights}, the case dealing with pie weights is tautologous; the one for flexibles is Theorem~4.9 of~\cite{Bird1989Flexible}, whilst the semiflexible case appears to be new. What is certainly new is the possibility of treating all three cases in a uniform manner.
\begin{Corollary}
Each of the classes of semiflexible, flexible and pie weights is saturated.
\end{Corollary}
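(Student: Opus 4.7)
The plan is to deduce the corollary directly from Proposition~\ref{prop:pie} applied to the weight $2$-monad $T$ on $[\mathrm{ob}\ \f J, \Cat]$, for which $\TAlgs = [\f J, \Cat]$. Recall that a class $\Phi$ of weights is \emph{saturated} precisely when it contains all representables and is closed under $\Phi$-weighted colimits in $[\f J, \Cat]$; so to prove the corollary we must verify both conditions for each of our three classes.

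For the representability clause, I would first check that every representable weight $\f J(j,\thg)$ is a free $T$-algebra. Since $F$ is left Kan extension along the inclusion $\mathrm{ob}\ \f J \hookrightarrow \f J$, it is given by the explicit formula $FX \cong \sum_{k\in \mathrm{ob}\ \f J}\f J(k,\thg) \times X(k)$; taking $X$ to be the terminal category at $j$ and empty elsewhere yields $FX \cong \f J(j,\thg)$. The first clause of Proposition~\ref{prop:pie} places every free $T$-algebra in all three classes, and it follows that every representable weight is pie, hence flexible and semiflexible.

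For the closure clause, the second half of Proposition~\ref{prop:pie} asserts that the semiflexible, flexible and pie $T$-algebras are each closed in $\TAlgs = [\f J, \Cat]$ under the corresponding class of weighted colimits. To transcribe this into the language of weights, we identify semiflexible and flexible $T$-algebras with the eponymous weights by definition, and pie $T$-algebras with pie weights via Proposition~\ref{prop:pieweights}. Combining both clauses yields saturation of each class. No genuine obstacle arises: the corollary is essentially a packaging of Proposition~\ref{prop:pie} in the particular case of the weight $2$-monad, together with the elementary observation that the representables among weights are exactly the free algebras of $T$ at a one-point presheaf.
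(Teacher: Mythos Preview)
Your proposal is correct and follows essentially the same approach as the paper: view $[\f J,\Cat]$ as $\TAlgs$ for the weight $2$-monad, observe that representables are free algebras, and invoke Proposition~\ref{prop:pie}. You add a bit more detail---the explicit left Kan extension formula for $F$ and the appeal to Proposition~\ref{prop:pieweights} to align the two senses of ``pie''---but the argument is the same.
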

\begin{proof}
As in~\cite{Albert1988The-closure}, to say that a class $\Phi$ of weights is saturated is to say that the
full subcategory of $[\f J, \Cat]$ spanned by the weights in $\Phi$ with domain $\f J$ contains the representables and is closed under $\Phi$-colimits. If we view $[\f J, \Cat]$ as the category of algebras for the weight $2$-monad on $[\mathrm{ob}\ \f J, \Cat]$, then every representable weight is free as a $T$-algebra; whence the result follows from the preceding proposition.
\end{proof}
The preceding result can be restated as saying that, in the case of the $2$-monad for weights, the semiflexible, flexible and pie algebras in $[\f J, \Cat]$ are precisely the closure of the free algebras in $\TAlgs$ under the corresponding class of colimits. In the remainder of this section, we consider the extent to which this remains true on replacing weights by algebras for a general $2$-monad $T$.
For the flexible algebras, the answer is straightforward; the following was observed in Remark 5.5 of~\cite{Lack2007Homotopy-theoretic}.
\begin{Proposition}
The flexible algebras are the closure of the frees in $\TAlgs$ under flexible colimits.
\end{Proposition}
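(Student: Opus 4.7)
The plan is to combine two ingredients: first, that every algebra $A$ admits a canonical presentation of $QA$ as a pie colimit of free algebras; and second, that the flexible algebras are exactly the retracts of algebras of the form $QA$.

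For the first ingredient, recall from \cite{Lack2002Codescent} (as was referenced in the proof of Lemma~\ref{lem:rhoobj}) that $QA$ may be constructed as an isocodescent object of the truncated simplicial diagram
\begin{equation*}
\cd{\fs\us\fs\us\fs\us A \ar@<8pt>[r] \ar[r] \ar@<-8pt>[r] & \fs\us\fs\us A \ar@<4pt>[r] \ar@<-4pt>[r] & \fs\us A}
\end{equation*}
arising from the comonad resolution of $A$. The key point is that every object appearing in this diagram is free, and that isocodescent objects are built from coproducts, coinserters and coequifiers, hence are pie colimits. So $QA$ lies in the closure of the free algebras under pie colimits, and in particular under flexible colimits.

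For the second ingredient, we use the characterisation recalled from \cite[Theorem 4.4]{Blackwell1989Two-dimensional} in Section~\ref{sec:pie}: an algebra $A$ is flexible if and only if it is a retract in $\TAlgs$ of $QB$ for some $B$. But retracts (splittings of idempotents) are themselves flexible colimits; indeed, this is precisely the extra class of colimits, beyond the pie colimits, that the flexible colimits add. Putting the two ingredients together, a flexible $A$ is a retract of $QB$, which in turn is a pie colimit of frees, so $A$ is a flexible colimit of frees. Combined with the converse direction from Proposition~\ref{prop:pie}, which shows that such flexible colimits of frees remain flexible, this yields the claimed equality of classes.

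There is no real obstacle here beyond citing the correct presentation of $QA$; the main point to be careful about is that the isocodescent-object presentation of $QA$ really does use only pie-type $2$-categorical colimits (coproducts, coinserters, coequifiers), and not, say, iso-coequalisers of parallel pairs, which would not be pie.
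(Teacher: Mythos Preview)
Your proof is correct and follows essentially the same route as the paper's: present $QA$ as a pie colimit of frees via the isocodescent construction of~\cite{Lack2002Codescent}, then use that every flexible algebra is a retract of some $QA$ together with the fact that splitting an idempotent is a flexible colimit. Your version is slightly more explicit in spelling out the simplicial resolution and in invoking Proposition~\ref{prop:pie} for the converse inclusion, but the argument is the same.
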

\begin{proof}
As in~\cite{Lack2002Codescent}, every algebra of the form $QA$ may be presented as a pie colimit of free algebras. Now every flexible algebra is a retract of one of the form $QA$, and thus the splitting of an idempotent on some $QA$. Since the colimit which splits an idempotent is flexible, every flexible algebra is a flexible colimit of frees.
\end{proof}
There is an equally straightforward statement concerning the semiflexibles.
\begin{Proposition}
The semiflexible algebras are the closure of the frees in $\TAlgs$ under bicolimits.
\end{Proposition}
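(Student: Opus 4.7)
The plan is to verify both inclusions; containment of the free algebras is immediate from Proposition~\ref{prop:pie}. For closure of the semiflexibles under bicolimits, suppose $A \in \TAlgs$ is a bicolimit of some $D \colon \f J^{\op} \to \TAlgs$ landing in the semiflexibles, with weight $W \in [\f J, \Cat]$. I would first apply the $Q$ construction for the weight $2$-monad on $[\mathrm{ob}\ \f J, \Cat]$ to obtain a flexible---hence semiflexible---replacement $QW$, together with its counit pseudomorphism $p_W \colon QW \rightsquigarrow W$, which is an equivalence in $\TAlg$, i.e.\ in $\Ps(\f J, \Cat)$. Consequently the $2$-functor $\Ps(\f J, \Cat)(p_W, \thg)$ is a pointwise equivalence, so that $A$ is equally a bicolimit of $D$ weighted by $QW$. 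Since $QW$ is semiflexible, the dual of Corollary~\ref{cor:semiflexbi} implies that the strict colimit $QW \star D$ is itself a bicolimit for $QW$, whence $QW \star D \simeq A$ in $\TAlgs$. By Proposition~\ref{prop:pie}, $QW \star D$ is semiflexible; and since semiflexibility is an equivalence-invariant property in $\TAlgs$, so too is $A$.

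For the reverse inclusion, let $A$ be semiflexible. The defining property of semiflexibility gives an equivalence $A \simeq QA$ in $\TAlgs$; and as recalled in the proof of the preceding proposition, $QA$ admits a codescent presentation (after~\cite{Lack2002Codescent}) as a pie---hence flexible and hence semiflexible---colimit of free $T$-algebras. Invoking the dual of Corollary~\ref{cor:semiflexbi} once more, colimits weighted by semiflexible weights are bicolimits, so $QA$, and therefore $A$, is a bicolimit of frees.

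The main obstacle is the step identifying a bicolimit weighted by an arbitrary $W$ with the strict colimit weighted by its flexibilisation $QW$: this rests on the fact (established in~\cite{Blackwell1989Two-dimensional}) that $p_W$ is an equivalence in $\TAlg$, and on the subsequent observation that this promotes to an equivalence between the respective pseudocone $2$-functors, thereby identifying the bicolimit data for $W$ and $QW$. Once that reduction is in hand the remaining components---invariance of semiflexibility under equivalence, closure of semiflexibles under semiflexible weighted colimits, and the codescent presentation of $QA$---are all recorded results in the paper or its references.
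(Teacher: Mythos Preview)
Your proof is correct and follows essentially the same strategy as the paper's. The only cosmetic difference is in the closure step: the paper passes directly through the pseudocolimit, using the isomorphism $W \star_{\mathrm{ps}} D \cong QW \star D$ that comes from the adjunction $Q \dashv \iota$, whereas you reach $QW \star D$ by first arguing that the equivalence $p_W$ in $\Ps(\f J,\Cat)$ identifies $W$- and $QW$-weighted bicolimits and then invoking the dual of Corollary~\ref{cor:semiflexbi}; the reverse inclusion is handled identically in both. (One small slip: $p_W$ is the counit of $Q \dashv \iota$ and is a \emph{strict} morphism, not merely a pseudomorphism---though of course it still lies in $\TAlg$ via $\iota$, so your argument is unaffected.)
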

\begin{proof}
First observe that the semiflexibles are closed under bicolimits. For indeed, if $X$ is a $W$-weighted bicolimit of a diagram $D$ of semiflexible algebras, then $X$ is equivalent to $W\star_\mathrm{ps}D$, the $W$-weighted pseudocolimit of $D$; since semiflexibles are closed under equivalence it is therefore enough to show that $W\star_\mathrm{ps}D$ is semiflexible. But $W\star_\mathrm{ps}D
\cong QW \star D$ is a semiflexible colimit of semiflexibles, and so indeed semiflexible.
Every free algebra is semiflexible, and so to complete the proof it suffices to show that every semiflexible is a bicolimit of frees. As we observed above, any algebra of the form $QA$ is a pie colimit of frees; hence a semiflexible colimit of frees, and so, by (the dual of) Corollary~\ref{cor:semiflexbi}, a bicolimit of frees. An algebra is semiflexible just when it is equivalent to one of the form $QA$, and any object equivalent to a bicolimit of frees is again a bicolimit of frees.
\end{proof}
We have not attempted to ascertain whether or not the semiflexible algebras are, in fact, the closure of the frees in $\TAlgs$ under semiflexible colimits; our investigations have shown only that there seems to be no obvious proof of this fact. We have, however, considered in some detail the corresponding question for the pie algebras, and found it to be false: the pie algebras for an arbitrary $2$-monad $T$ may comprise a strictly larger class than the closure of the frees in $\TAlgs$ under pie colimits. We will describe a $2$-monad exhibiting this divergence in the proof of Proposition~\ref{prop:counterexample} below.

However, it turns out that the two classes just mentioned \emph{will} coincide under certain additional assumptions on our $2$-monad $T$ and our $2$-category $\f C$. The remainder of this section will be given over to the analysis underlying this result. The key step will be to interpose between the pie algebras and the pie colimits of the frees a third class of algebras: the objective quotients of the frees.  Here, we call an object $B$ of a $2$-category an \emph{objective quotient} of an object $A$ if there exists some objective morphism $A \to B$.
\begin{Proposition}\label{thm:classesofpie}
Considering the following three classes of $T$-algebras:
\begin{enumerate}[(1)]
\item The closure of the frees  under pie colimits;
\item The objective quotients  of the frees; and
\item The pie algebras,
\end{enumerate}
we have inclusions (1) $\subseteq$ (2) $\subseteq$ (3).
\end{Proposition}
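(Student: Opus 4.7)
The plan is to prove the two inclusions separately. For $(1) \subseteq (2)$, let $\mathcal{E}$ denote the class of objective quotients of frees; I will show $\mathcal{E}$ contains the frees and is closed under pie colimits, from which the inclusion follows by minimality of $(1)$. The frees trivially lie in $\mathcal{E}$ via their identity morphisms. Since pie colimits are generated by coproducts, coinserters and coequifiers, it suffices to check closure under each of these. For coproducts: if $e_{i} \colon FX_{i} \to A_{i}$ are objective, then so is their coproduct, since the objectives are closed under $2$-dimensional colimits in the arrow $2$-category (as noted in Section~\ref{sec:pie}); and the coproduct $\coprod FX_{i} \cong F(\coprod X_{i})$ remains free. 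For coinserters and coequifiers, the key point is that the universal maps $i \colon B \to C$ of a coinserter (and $q \colon B \to C$ of a coequifier) are always themselves objective. Granting this, if $B \in \mathcal{E}$ via some objective $e_{B} \colon FY \to B$, then the composites $i \circ e_{B}$ (respectively $q \circ e_{B}$) are objectives from a free, placing the colimits in $\mathcal{E}$.

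To see that a coinserter map $i \colon B \to C$ with universal $2$-cell $\iota \colon if \Rightarrow ig$ is objective: given a fully faithful $h \colon W \to Z$ and a compatible square $k \colon B \to W$, $m \colon C \to Z$ with $hk = mi$, the $2$-cell $m \iota \colon mif \Rightarrow mig$ equals $h$ applied to some $2$-cell $kf \Rightarrow kg$, which by full faithfulness of $h$ lifts uniquely to $\alpha \colon kf \Rightarrow kg$; the universal property of the coinserter then produces the required unique $n \colon C \to W$ with $n \circ i = k$ and $n \iota = \alpha$, and full faithfulness of $h$ ensures $hn = m$. The $2$-dimensional aspect follows by a parallel $2$-cell lifting argument, and the coequifier case is analogous, this time using that coequifier maps are epimorphic combined with $2$-cell lifting through $h$.

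For $(2) \subseteq (3)$, given an objective $e \colon FX \to A$, I would transport a $Q$-coalgebra structure from $FX$ to $A$. Since free algebras are pie by Proposition~\ref{prop:pie}, we have a strict $Q$-coalgebra structure $\sigma \colon FX \to QFX$. Setting $a' := Qe \circ \sigma \colon FX \to QA$, one has $p_{A} \circ a' = e \circ p_{FX} \circ \sigma = e$, so by orthogonality of $e$ against the fully faithful $p_{A}$, there is a unique $a \colon A \to QA$ with $a \circ e = a'$ and $p_{A} \circ a = 1_{A}$. Counitality is the latter equation, and coassociativity $Qa \circ a = \Delta_{A} \circ a$ follows by noting both sides agree after composition with the fully faithful $p_{QA}$ (both yielding $a$, via the counit laws $p_{QA} \circ \Delta_{A} = 1 = Qp_{A} \circ \Delta_{A}$ together with naturality of $p$) and after composition with the objective $e$ (both yielding $QQe \circ Q\sigma \circ \sigma$, by coassociativity of $\sigma$ on $FX$); the pullback property defining objectivity then forces equality.

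The main obstacle I anticipate lies in step $(1) \subseteq (2)$, specifically in verifying the full objectivity, including its $2$-dimensional aspect, of the coinserter and coequifier maps. While the $1$-cell level argument is a clean orthogonality chase, the corresponding $2$-cell lifts must be checked carefully against the full $2$-categorical universal properties; once this groundwork is in place, everything else reduces to routine manipulations of orthogonality and the elementary properties of $Q$ collected in Lemmas~\ref{lem:rhoobj} and~\ref{cor:comonadmap}.
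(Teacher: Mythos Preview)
Your proof is correct and follows essentially the same strategy as the paper. For $(1)\subseteq(2)$ the paper, like you, checks closure of the objective quotients of frees under coproducts, coinserters and coequifiers, simply asserting that coinserter and coequifier maps are objective; your sketch of why this holds is a reasonable addition and the concern you flag about the $2$-dimensional aspect is not a real obstacle. For $(2)\subseteq(3)$ the paper isolates the slightly more general statement that \emph{any} objective quotient of a pie algebra is pie, and proves it by exactly the orthogonality argument you give: lift $Qe\circ\sigma$ against the fully faithful $p_A$ to obtain $a$, then verify coassociativity by showing both $Qa\circ a$ and $\Delta_A\circ a$ are diagonal fillers of the same (objective, fully faithful) square---which is precisely your check against $p_{QA}$ and $e$.
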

In the statement of this result, and in what follows, when we speak of colimits or of objective quotients, it is to be understood that these are being taken in $\TAlgs$.
\begin{proof}
To prove (1) $\subseteq$ (2) it is clearly enough to show that the objective quotients of the frees are closed under pie colimits. For closure under coproducts, suppose that $(A_i \mid i \in I)$ is a collection of $T$-algebras which are objective quotients of frees, and let $q_i \colon FX_i \to A_i$ witness this fact for each $i \in I$. Now $\Sigma_i q_i \colon \Sigma_i \fs X_i \to \Sigma_i A_i$ is objective, since objective morphisms are closed under colimits in the arrow category, and $\Sigma_i \fs X_i$ is free on $\Sigma_i X_i$, whence $\Sigma_i A_i$ is an objective quotient of a free.
For closure under coinserters, let $f, g \colon A \rightrightarrows B$ in $\TAlgs$, with $A$ and $B$ objective quotients of frees. Now the coinserter morphism $h \colon B \to C$, like any coinserter morphism, is objective; whence $C$ is an objective quotient of $B$, and hence also an objective quotient of a free. The case of coequifiers is identical on observing that coequifier morphisms, too, are objective.

As for (2) $\subseteq$ (3), we saw in Proposition~\ref{prop:pie} above that every free algebra is a pie algebra; the proof is now completed by the following proposition, which we separate out for its independent interest.\end{proof}

\begin{Proposition}
Any objective quotient of a pie algebra is a pie algebra.
\end{Proposition}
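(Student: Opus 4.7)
The plan is to construct strict $Q$-coalgebra structure on $B$ by leveraging the objectivity of the given morphism $f \colon A \to B$ together with the pointwise fully faithful counit $p$ of the $2$-comonad $Q$.

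Starting from the strict coalgebra structure $a \colon A \to QA$ witnessing that $A$ is pie, I would first observe that the square
\[
\cd{A \ar[r]^-{Qf \circ a} \ar[d]_{f} & QB \ar[d]^{p_B} \\ B \ar[r]_-{1_B} & B}
\]
commutes in $\TAlgs$, since $p_B \circ Qf \circ a = f \circ p_A \circ a = f$ by naturality of $p$ and the counit axiom $p_A \circ a = 1_A$. Because $f$ is objective and $p_B$ is fully faithful in $\TAlgs$, the one-dimensional aspect of objectivity furnishes a unique diagonal filler $b \colon B \to QB$ in $\TAlgs$ with $p_B \circ b = 1_B$ and $b \circ f = Qf \circ a$. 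The first equation is exactly the counit axiom for a candidate $Q$-coalgebra structure $b$ on $B$.

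The remaining step is to verify coassociativity, namely $Qb \circ b = \Delta_B \circ b$ as morphisms $B \to QQB$ in $\TAlgs$. I would deduce this by a second appeal to the objectivity of $f$, now against the fully faithful $p_{QB} \colon QQB \to QB$: by the pullback property characterising objectivity, it suffices to show that the two candidates agree after pre-composition with $f$ and after post-composition with $p_{QB}$. For the post-composition, naturality of $p$ gives $p_{QB} \circ Qb \circ b = b \circ p_B \circ b = b$, whilst the comonad identity $p_{QB} \circ \Delta_B = 1_{QB}$ gives $p_{QB} \circ \Delta_B \circ b = b$. For the pre-composition, $2$-functoriality of $Q$ together with the constructed equation $b \circ f = Qf \circ a$ yield
\[Qb \circ b \circ f = Q(Qf \circ a) \circ a = QQf \circ Qa \circ a,\]
whilst naturality of $\Delta$ yields
\[\Delta_B \circ b \circ f = \Delta_B \circ Qf \circ a = QQf \circ \Delta_A \circ a;\]
these coincide thanks to the coassociativity equation $Qa \circ a = \Delta_A \circ a$ satisfied by the coalgebra structure on $A$. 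Hence $b$ equips $B$ with strict $Q$-coalgebra structure, so $B$ is pie.

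No serious obstacle is expected in this argument. The only conceptual point is recognising that both $Q$-coalgebra axioms for $B$ can be phrased as lifting problems against fully faithful morphisms in $\TAlgs$, which allows the objectivity of the single morphism $f$ to do the work for both axioms; everything else reduces to $2$-naturality of $p$ and $\Delta$ together with the already-assumed coalgebra axioms of $a$.
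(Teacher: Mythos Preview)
Your proof is correct and follows essentially the same approach as the paper: obtain the coalgebra map $b$ as the unique diagonal filler of the square with objective $f$ against the fully faithful $p_B$, and then verify coassociativity by showing that both $Qb \circ b$ and $\Delta_B \circ b$ are fillers of a single lifting problem against $p_{QB}$. The paper records the second square explicitly and leaves the verification that both maps fill it as ``easy to verify'', whereas you spell out those computations; the content is the same.
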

\begin{proof}
Let $A \in \TAlgs$ admit strict $Q$-coalgebra structure $a \colon A \to QA$, and let $f \colon A \to B$ be an objective morphism. We must show that $B$ admits strict $Q$-coalgebra structure. So consider the square on the left in the diagram:
\begin{equation*}
\cd{
  A \ar[d]_{Qf \circ a} \ar[r]^-{f} & B \ar[d]^{1_B} \ar@{-->}[dl]^{b} \\
  QB \ar[r]_{p_B} & B 
} \qquad \qquad
\cd{
  A \ar[d]_{\Delta
  \circ Qf \circ a} \ar[r]^-{f} & B \ar[d]^{b} \\
  QQB \ar[r]_{p_{QB}} & QB\rlap{ .}
}\end{equation*}
Since $f$ is objective, and $p_B$ fully faithful, there exists as indicated a unique diagonal filler $b \colon B \to QB$ making both triangles commute. We claim $b$ equips $B$ with $Q$-coalgebra structure. The lower-right triangle asserts the counit axiom $p_B \circ b = 1_B$; as for the comultiplication axiom, it is easy to verify that both $Qb \circ b$ and $\Delta \circ b$ are diagonal fillers for the right-hand square above, and so must be equal.
\end{proof}

We now turn our attention to finding conditions under which the two inclusions of Proposition~\ref{thm:classesofpie} become equalities. It seems that obtaining necessary and sufficient conditions is a hard problem; we shall therefore content ourselves with giving sufficient conditions that are broad enough to encompass the classes of examples that we studied in Section~\ref{sec:exs}.  We first consider when the inequality (2) $\subseteq$ (3) becomes an equality; this is closely related to our Theorem~\ref{thm:charthm}, and in what follows, we make free use of the definitions and notational conventions established for that result. In particular, we recall from~\eqref{eq:situation} the comparison functor $j \colon \ToAlg \to \TpAlg$, existing for a $\f C$ with enough discretes, that assigns to  each $T$-algebra its induced structure ``at the level of objects''.

\begin{Proposition}\label{prop:objproj}
If $\f C$ has enough discretes and (objective, fully faithful) factorisations lifting to $\TAlgs$, then for each $T$-algebra $A$, the following are equivalent:
\begin{enumerate}[(i)]
\item $A$ is an objective quotient of a free;
\item $A$ is an objective quotient of a free on a (projective) \proj;
\item $jA$ is a free $\Tp$-algebra.
\end{enumerate}
\end{Proposition}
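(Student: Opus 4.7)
My plan is to establish the cycle (ii) $\Rightarrow$ (i) $\Rightarrow$ (iii) $\Rightarrow$ (ii), with (ii) $\Rightarrow$ (i) being tautologous. For (i) $\Rightarrow$ (ii), given an objective quotient $q \colon FX \to A$, I would precompose with $F\lambda_X \colon FDOX \to FX$: since $\lambda_X$ is objective by construction and $F$ preserves objectives (because its right adjoint $U$ preserves fully faithfuls; cf.\ Proposition~\ref{prop:whenpresbij}), and since objectives are closed under composition, the composite $q \circ F\lambda_X \colon FDOX \to A$ is objective, and $DOX$ is projectively discrete.

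For (ii) $\Rightarrow$ (iii), given an objective $q \colon FDX \to A$ with $X \in \Cp$, I would apply the comparison functor $j \colon \ToAlg \to \TpAlg$. By the commutativity of the diagram~\eqref{eq:situation}, $jFDX = \fp X$ is a free $\Tp$-algebra, while $j$ inverts objective morphisms (as observed in the proof of Proposition~\ref{prop:pieproj}), so $jq \colon \fp X \to jA$ is an isomorphism, exhibiting $jA$ as free.

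The hard step is (iii) $\Rightarrow$ (ii). Assuming an isomorphism $\phi \colon \fp X \to jA$ in $\TpAlg$, I would transpose it through the adjunctions $\fp \dashv \up$, $D \dashv O$ and $F \dashv U$ to obtain in succession a map $g \colon X \to OA$ in $\Cp$, a map $\lambda_A \circ Dg \colon DX \to A$ in $\Co$, and finally a candidate $\hat g \colon FDX \to A$ in $\TAlgs$. The main task is to verify that $\hat g$ is objective. I would factorise $\hat g = g_2 \circ g_1$ with $g_1$ objective and $g_2$ fully faithful, using that factorisations lift to $\TAlgs$, and show that $g_2$ is invertible. Direct unwinding of the transposition gives $OU\hat g = Oa \circ OT\lambda_A \circ OTDg = a' \circ \Tp g = U_d\phi$, where $a'$ is the structure map of $jA$; hence $j\hat g$ is invertible. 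Since $jg_1$ is invertible, so too is $jg_2$, whence $OUg_2 = U_d jg_2$ is invertible in $\Cp$. But the maps inverted by $O$ are precisely the objectives, so $Ug_2$ is simultaneously objective and fully faithful in $\Co$, and thus an isomorphism; conservativity of $U$ then forces $g_2$ itself to be an isomorphism, and so $\hat g = g_2 \circ g_1$ is objective.

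The main obstacle will be the bookkeeping in this final step: both the explicit transposition argument that identifies $OU\hat g$ with the given isomorphism $U_d\phi$ (so as to conclude that $j\hat g$ is invertible), and the clean deployment of the three facts that $j$ inverts objectives, that $O$ inverts \emph{precisely} the objectives, and that objective-plus-fully-faithful forces invertibility in any factorisation system.
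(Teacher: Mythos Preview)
Your proposal is correct and follows essentially the same route as the paper. The arguments for (i) $\Leftrightarrow$ (ii) and (ii) $\Rightarrow$ (iii) are identical to the paper's. For (iii) $\Rightarrow$ (ii), the paper is slightly more direct: rather than factorising $\hat g = g_2 \circ g_1$ and arguing that $g_2$ is invertible, it observes once and for all that a map $q \colon FDX \to A$ is objective in $\TAlgs$ if and only if $jq$ is invertible in $\TpAlg$---because $q$ is objective iff $Uq$ is objective (factorisations lift) iff $OUq = U_d\, jq$ is invertible ($O$ inverts precisely the objectives) iff $jq$ is invertible ($U_d$ conservative). Your factorisation argument unpacks the same chain of equivalences, just applied to the fully faithful factor $g_2$ rather than to $\hat g$ directly; so the difference is cosmetic rather than substantive. (Minor quibble: your opening sentence announces the cycle (ii) $\Rightarrow$ (i) $\Rightarrow$ (iii) $\Rightarrow$ (ii), but you then prove (i) $\Rightarrow$ (ii) and (ii) $\Rightarrow$ (iii) instead; the logic is fine, only the roadmap is mislabelled.)
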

\begin{proof}
First we prove (i) $\Leftrightarrow$ (ii). For the non-trivial direction,  if $q \colon \fs X \to A$ exhibits $A$ as an objective quotient of a free, then $q \circ \fs \lambda_X \colon \fs DOX \to A$ exhibits it as an objective quotient of a free on a \proj: observe that $\fs \lambda_X$ is objective in $\TAlgs$ since $\lambda_X$ is objective in $\f C$ and left adjoints preserve objectivity.

Now we prove (ii) $\Leftrightarrow$ (iii). For any $X \in \Cp$, since $j\fs DX = \fp X$, the action of $j$ on homs gives a function $\ToAlg(\fs DX, A) \to \TpAlg(\fp X, jA)$; this is in fact a bijection, with inverse given by the composite
\[
 \TpAlg(\fp X, jA) \cong
 \Cp(X, \up jA) =
 \Cp(X, O \us A) \cong
 \ToAlg( \fs D X, A)\rlap{ .}
\]
We claim that under this bijection, objective morphisms $\fs DX \to A$ correspond with isomorphisms $\fp X \to jA$; this will complete the proof. For this, we observe that $q \colon \fs DX \to A$ is objective just when $O\us q = \up j q$ is invertible in $\f A$; but $\up$ is conservative, so this happens just when $jq$ is invertible in $\TpAlg$, as desired.
\end{proof}

Combining this result with Theorem~\ref{thm:charthm}, we obtain:

\begin{Corollary}\label{cor:2in3}
Let $\f C$ have enough \projs\ and (objective, fully faithful) factorisations lifting to $\TAlgs$. If $\Cp$ has, and the induced monad $\Tp$ preserves, coreflexive equalisers, then every pie $T$-algebra is an objective quotient of a free on a discrete.
\end{Corollary}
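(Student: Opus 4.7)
The plan is to derive this corollary as an immediate consequence of Theorem~\ref{thm:charthm} and Proposition~\ref{prop:objproj}, which together provide the two halves of the chain we need.

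First I would observe that under the stated hypotheses, Theorem~\ref{thm:charthm} applies and tells us that a $T$-algebra $A$ is pie if and only if $jA$ is a free $\Tp$-algebra. So if $A$ is pie, then $jA$ is free over $\Cp$.

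Next, the hypotheses of Proposition~\ref{prop:objproj} are in force (they are in fact a subset of the present hypotheses, needing only that $\f C$ has enough discretes and that (objective, fully faithful) factorisations lift to $\TAlgs$), so we may invoke the equivalence (iii) $\Leftrightarrow$ (ii) from that proposition: $jA$ is a free $\Tp$-algebra if and only if $A$ is an objective quotient of a free on a projective discrete.

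Chaining these two equivalences gives the result immediately: every pie $T$-algebra $A$ is an objective quotient of a free on a discrete. There is no genuine obstacle here, since both ingredients have already been proved; the corollary is simply the composite of the two statements, packaging together the characterisation of pie algebras in terms of freeness of $jA$ with the characterisation of such algebras as objective quotients of frees on discretes.
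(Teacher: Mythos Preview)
Your proposal is correct and matches the paper's own approach exactly: the corollary is stated immediately after Proposition~\ref{prop:objproj} with the words ``Combining this result with Theorem~\ref{thm:charthm}, we obtain'', which is precisely the chain of implications you describe.
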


As we observed in Section~\ref{sec:pie}, a free algebra on a projective discrete is again projectively discrete. Thus the preceding corollary ensures that, in particular, the full sub-$2$-category of $\TAlgs$ spanned by the pie algebras has enough discretes, and that, in fact, the projectively discrete objects of this $2$-category are precisely the algebras free on projective discretes. 

We now consider when the inequality (1) $\subseteq$ (2) of Proposition~\ref{thm:classesofpie} becomes an equality; we shall do so in a slightly less general situation than that we have considered so far.

\begin{Proposition}\label{thm:1in2}
Let $\f J$ be a small $2$-category, and $T$ an objective-preserving $2$-monad on $[\f J, \Cat]$. The following are equivalent properties of the $T$-algebra $A$:
\begin{enumerate}[(i)]
\item $A$ is an objective quotient of a free;
\item $A$ is an objective quotient of a free on a discrete;
\item $A$ lies in the closure of the frees under pie colimits;
\item $A$ lies in the closure of the frees on discretes under pie colimits.
\end{enumerate}
\end{Proposition}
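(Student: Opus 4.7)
My plan is to prove the equivalences via the cycle
\[
\text{(iv)} \Rightarrow \text{(iii)} \Rightarrow \text{(i)} \Leftrightarrow \text{(ii)} \Rightarrow \text{(iv)},
\]
with only the last arrow requiring substantive work. Three of the four are easy: (iv) $\Rightarrow$ (iii) is immediate, and (iii) $\Rightarrow$ (i) is obtained by reusing the closure argument from the proof of Proposition \ref{thm:classesofpie}, which shows that the class of objective quotients of frees contains the frees and is closed in $\TAlgs$ under pie colimits. For (i) $\Leftrightarrow$ (ii) I would apply Proposition \ref{prop:objproj}: $[\f J, \Cat]$ has enough discretes by the discussion in Section \ref{sec:pie}, and since $T$ preserves objectives, (objective, fully faithful) factorisations lift to $\TAlgs$ by Proposition \ref{prop:whenpresbij}, so both hypotheses of that proposition are in force.

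The remaining implication (ii) $\Rightarrow$ (iv) is the substantive content. Given an objective $q \colon FX \to A$ with $X$ projectively discrete, the plan is to build $A$ as an explicit two- or three-stage pie colimit of discrete frees. First, identify a projective discrete $Y \in \Cp$ whose free $FY$ parameterises the pairs of parallel morphisms out of $FX$ that receive a $2$-cell in $A$; using objectivity of $q$, one obtains source and target maps assembling into a parallel pair $FY \rightrightarrows FX$ in $\TAlgs$, whose coinserter $A_{1}$ maps canonically to $A$. Then, choose a projective discrete $Z \in \Cp$ classifying the $2$-cell relations needed to cut $A_{1}$ down to $A$, and coequify an appropriate parallel pair of $2$-cells to obtain $A_{2}$, together with a canonical comparison map $A_{2} \to A$; by construction $A_{2}$ is a pie colimit of the discrete frees $FX$, $FY$ and $FZ$.

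The main obstacle is to verify that the comparison $A_{2} \to A$ is an isomorphism, equivalently that $Y$ and $Z$ can be chosen so as to give a complete presentation. The strategy is that objectivity of $q$ together with the orthogonality between objectives and fully faithfuls in $\TAlgs$ forces every $1$-cell and $2$-cell of $A$ to admit a unique factorisation through the added structure; essential surjectivity of $A_{2} \to A$ is then arranged by a sufficiently generous choice of $Y$ and $Z$, while fullness and faithfulness come from the orthogonality itself. The blueprint here is the analogous argument for the weight $2$-monad, namely Proposition \ref{prop:pieweights} and Corollary 3.3 of \cite{Power1991A-characterization}, where pie weights are shown to coincide with the closure of representables under pie colimits in $[\f J, \Cat]$. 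The present argument generalises that one uniformly to $T$-algebras, relying on the hypothesis that $T$ preserves objectives both to ensure that free algebras on projective discretes are themselves projectively discrete (so that $FY$ and $FZ$ belong to the desired class), and to guarantee that discrete generating data at the level of $\Cp$ lifts to pie-colimit presentations at the level of $\TAlgs$.
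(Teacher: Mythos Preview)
Your overall strategy---the cycle (iv) $\Rightarrow$ (iii) $\Rightarrow$ (i) $\Leftrightarrow$ (ii) $\Rightarrow$ (iv), with the substantive content in the last arrow, built from a coinserter followed by coequifiers of frees on discretes---is exactly the paper's approach. The easy implications are handled correctly.

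There is, however, a genuine gap in your sketch of (ii) $\Rightarrow$ (iv). You propose one coinserter (yielding $A_1$) and one coequifier (yielding $A_2$) and then assert that the comparison $A_2 \to A$ can be made an isomorphism by a ``sufficiently generous choice of $Y$ and $Z$''. This does not work as stated. With the canonical choices (comma object for $Y$, equi-kernel for $Z$), after the coinserter one obtains $q_1 \colon X_1 \to A$ with $Uq_1$ pointwise bijective on objects and full; after the coequifier one obtains $q_2 \colon X_2 \to A$ with $Uq_2$ moreover a split epimorphism. But bijective-on-objects, full, and split-epi together still do not force faithfulness, so $q_2$ need not be invertible. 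The paper therefore requires a \emph{third} step: one forms the equi-kernel of $Uq_2$, passes to discretes, and shows that $q_2$ is itself the coequifier in $\TAlgs$ of the resulting pair of $2$-cells, so that $A$ is this coequifier. The delicate point here is that the induced factorisation $h \colon UA \to UB$ through $Uq_2$ must be shown to be an algebra map, and for this one needs $UFUq_2$ (not merely $Uq_2$) to be epimorphic; this is precisely why the intermediate coequifier step is needed, since it arranges $Uq_2$---and hence $UFUq_2$, as $T$ preserves objectives and split epis---to be split epi.

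Your remark that ``fullness and faithfulness come from the orthogonality itself'' is also not quite right: objectivity of $q$ gives bijectivity on objects, the coinserter step gives fullness, and faithfulness requires the two-stage coequifier argument just described.
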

\begin{proof}
We have (iv) $\Rightarrow$ (iii) trivially, (iii) $\Rightarrow$ (i) by Proposition~\ref{thm:classesofpie} and (i) $\Rightarrow$ (ii) by Proposition~\ref{prop:objproj}. It remains to show that (ii) $\Rightarrow$ (iv). First note that by Proposition~\ref{prop:whenpresbij}, since $T$ preserves objectives so too does $U \colon \TAlgs \to [\f J, \Cat]$. Thus it is clearly sufficient that we prove:

\emph{If $q_0 \colon X_0 \to A$ in $\TAlgs$ has $Uq_0$ pointwise bijective on objects, and $X_0$ is a pie colimit of frees on discretes, then so is $A$}. To see this, form the comma object in $[\f J, \Cat]$ as on the left of the diagram:
\begin{equation*}
\cd{
Y \ar[r]^d \ar[d]_c \dtwocell{dr}{\gamma} & \us X_0 \ar[d]^{\us q_0} \\
\us X_0 \ar[r]_{\us q_0} & \us A
} \qquad  \qquad
\cd{
\fs DOY \ar[r]^{\bar d} \ar[d]_{\bar c} \dtwocell{dr}{\delta} & X_0 \ar[d]^{i_0} \\
X_0 \ar[r]_{i_0} & X_1\rlap{ .}
}\end{equation*}
Composing $d$ and $c$ with the discrete cover $\lambda_Y \colon DOY \to Y$ and taking transposes, we obtain maps $\bar d, \bar c \colon \fs DOY \rightrightarrows X_0$; we now form
their coinserter as on the right above. The transpose of the $2$-cell $\gamma \circ \lambda_Y$ under adjunction constitutes a coinserter cocone under $\bar d, \bar c$ and so we have an induced map $q_1 \colon X_1 \to A$ making the triangle
\begin{equation*}
\cd{ X_0 \ar[rr]^{i_0} \ar[dr]_{q_0} & & X_1 \ar[dl]^{q_1} \\ & A}
\end{equation*}
commute. Now $i_0$ is objective, being a coequifier morphism, and so $Ui_0$, like $Uq_0$, is pointwise bijective on objects; which in turn implies that $Uq_1$ is pointwise bijective on objects.
We claim that it is also pointwise full. Indeed, given objects $x$ and $y$ in $X_1(j)$ and a morphism $f \colon q_1x \to q_1y$ in $A(j)$, we have, since $Ui_0$ is pointwise bijective on objects, a cone as on the left in the diagram:
\begin{equation*}
\cd{
\f J(j, \thg) \ar[r]^{i_0^{-1}x} \ar[d]_{i_0^{-1}y} \dtwocell{dr}{f} & \us X_0 \ar[d]^{\us q_0} \\
\us X_0 \ar[r]_{\us q_0} & \us A
} \qquad  \qquad
\cd{
\f J(j, \thg) \ar[r]^-{\bar f} & DOY \ar[r]^-{\eta} & \us \fs DOY \ar@/^12pt/[r]^{U(i_0 \bar d)} \ar@/_12pt/[r]_{U(i_0 \bar c)} \dtwocell[0.43]{r}{\us \delta} & \us X_1\rlap{ .}
}\end{equation*}
This cone corresponds to a map $\f J(j,
\thg) \to Y$ which, since $\f J(j, \thg)$ is projectively discrete, factors through the objective $\lambda_Y \colon DOY \to Y$ as $\bar f \colon \f J(j, \thg) \to DOY$, say.
So we have a $2$-cell as on the right above; now this in turn corresponds to a map $g \colon x \to y$ in $X_1(j)$, and it is easy to verify that $q_1g = f$ as required. Thus we have shown that $Uq_1$ is pointwise bijective on objects and full, and so to complete the proof, it will suffice to show that:

\emph{If $q_1 \colon X_1 \to A$ in $\TAlgs$ has $Uq_1$ pointwise bijective on objects and full, and $X_1$ is a pie colimit of frees on discretes, then so is $A$}. To this end, let us write $\mathbf 2$ for the arrow category, and $\f P$ for the category $\bullet \rightrightarrows \bullet$, as in Section~\ref{sec:2catsrelated}. Taking $K \colon \f P \to \mathbf 2$ to be the unique bijective-on-objects functor, we now form in $\f C$ the limit of the arrow $Uq_1$ weighted by $K \in [\mathbf 2, \Cat]$, as on the left of the diagram:
\begin{equation*}
\cd[@C+1em]{
  Z \ar@/^12pt/[r]^{u} \ar@/_12pt/[r]_{v} \dtwocell[0.32]{r}{\sigma} \dtwocell[0.6]{r}{\tau} &
  UX_1 \ar[r]^{\us q_1} & UA
} \qquad \qquad
\cd[@C+1em]{
  \fs DOZ \ar@/^12pt/[r]^{\bar u} \ar@/_12pt/[r]_{\bar v} \dtwocell[0.35]{r}{\bar \sigma} \dtwocell[0.62]{r}{\bar \tau} &
  X_1 \ar[r]^{i_1} & X_2\rlap{ .}
}
\end{equation*}
Thus $\us q_1 \circ \sigma = \us q_1 \circ \tau$ and $(Z, u, v, \sigma, \tau)$ are universal amongst such data; we call them the \emph{equi-kernel} of $Uq_1$. Whiskering with $\lambda_Z \colon DOZ \to Z$ and taking transposes, we obtain $\bar u$, $\bar v$, $\bar \sigma$ and $\bar \tau$ as on the right; let $i_1$ as displayed be their coequifier. Since clearly $q_1 \circ \bar \sigma = q_1 \circ \bar \tau$, we obtain an induced morphism $q_2 \colon X_2 \to A$ making the triangle
\begin{equation*}
\cd{ X_1 \ar[rr]^{i_1} \ar[dr]_{q_1} & & X_2 \ar[dl]^{q_2} \\ & A}
\end{equation*}
commute. Now since $i_{1}$ is a coequifier morphism it is objective; as $q_{1}$ is objective so also is $q_{2}$ and thus $Uq_{2}$ is pointwise bijective on objects.  We claim that it is also a split epimorphism. To see this, observe first that the equality $i_1 \circ \bar \sigma = i_1 \circ \bar \tau$ transposes under adjunction to give $Ui_1 \circ \sigma \circ \lambda_{Z} = Ui_1 \circ \tau \circ \lambda_{Z}$; now as $\lambda_{Z}$ is pointwise bijective on objects it is cofaithful, and thus $Ui_{1} \circ \sigma = Ui_{1} \circ \tau$. Since $\us q_1$ is pointwise bijective on objects and full, it is by an easy calculation the coequifier of its own equi-kernel $(\sigma, \tau)$, and so there is a unique map $g \colon \us  A \to UX_2$ with $g \circ \us  q_1 = \us  i_1$.  Now $\us q_2 \circ g \circ Uq_1 = \us q_2 \circ \us i_1 = \us q_1$ and hence $Uq_2 \circ g = 1_{\us A}$, since $\us q_1$ is pointwise bijective on objects and full, and thus epimorphic.
Thus $\us q_2$ is a split epimorphism as claimed; since it is also pointwise bijective on objects, it is therefore pointwise full. But now also $UFUq_2$ is a split epimorphism, and also pointwise bijective on objects, since $T = UF$ preserves objectives; whence $UFUq_2$ is also pointwise bijective on objects and full. To complete the proof it will therefore suffice to show that:

\emph{If $q_2 \colon X_2 \to A$ in $\TAlgs$ has both $Uq_2$ and $UFUq_2$ pointwise bijective on objects and full, and $X_2$ is a pie colimit of frees on discretes, then so is $A$}. To prove this, we form the equi-kernel of $\us q_2$, as on the left of the diagram:
\begin{equation*}
\cd[@C+1em]{
  W \ar@/^12pt/[r]^{w} \ar@/_12pt/[r]_{z} \dtwocell[0.32]{r}{\phi} \dtwocell[0.6]{r}{\psi} &
  \us X_2 \ar[r]^{\us q_2} & \us A
} \qquad \qquad
\cd[@C+1em]{
  \fs DOW \ar@/^12pt/[r]^{\bar w} \ar@/_12pt/[r]_{\bar z} \dtwocell[0.35]{r}{\bar \phi} \dtwocell[0.62]{r}{\bar \psi} &
  X_2 \ar[r]^{q_2} & A\rlap{ .}
}
\end{equation*}
Whiskering with $\lambda_W \colon DOW \to W$ and taking transposes, we obtain $\bar w$, $\bar z$, $\bar \phi$ and $\bar \psi$ as on the right; we now claim that $q_2$ is a coequifier of $\bar \phi$ and $\bar \psi$, which will complete the proof. Observe first that $q_2$ is epimorphic in $\TAlgs$, since $U$ is faithful and $Uq_2$ is epimorphic in $[\f J, \Cat]$; so it suffices to show that any map $g \colon X_2 \to B$ of $T$-algebras satisfying $g \circ \bar \phi = g \circ \bar \psi$ factors through $q_2$.

For such a $g$, by taking adjoint transposes we see that $\us g \circ \phi \circ \lambda_W = \us g \circ \psi \circ \lambda_W$, and since $\lambda_W$ is cofaithful, it follows that $\us g \circ \phi = \us g \circ \psi$. Since $\us q_2$ is bijective on objects and full, it is as before the coequifier in $[\f J, \Cat]$ of $\phi$ and $\psi$, and hence $\us g$ admits a factorisation $\us g = h \circ \us q_2$. We will be done if we can show that $h \colon \us A \to \us B$ is in fact an algebra map; or in other words, that the square on the right of the diagram
\begin{equation*}
\cd[@C+2em]{
\us \fs\us X_2 \ar[d]_{\us \epsilon_{X_2}} \ar[r]^{\us \fs\us q_2} & \us \fs\us A \ar[d]_{\us  \epsilon_{A}} \ar[r]^{\us \fs h} & \us B \ar[d]^{\us \epsilon_B} \\
\us X_2 \ar[r]_{\us q_2} & \us A \ar[r]_{h} & \us B}
\end{equation*}
is commutative. But since $\us \fs\us q_2$ is epimorphic, the commutativity of the right-hand square follows from that of the outside and of the left-hand square.\end{proof}
Taking together Theorem~\ref{thm:charthm}, Corollary~\ref{cor:2in3} and Proposition~\ref{thm:1in2}, we obtain the main result of this section:
\begin{Theorem}\label{thm:classescoincide}
Let $\f J$ be a small $2$-category and $T$ an objective-preserving $2$-monad on $[\f J, \Cat]$ for which the induced monad $T_d$ on $[\f J_0, \Set]$ preserves coreflexive equalisers. Then the following classes of $T$-algebras coincide:
\begin{enumerate}
\item The closure of the frees (on discretes) under pie colimits;
\item The objective quotients of the frees (on discretes);
\item The pie algebras;
\end{enumerate}
and may be characterised as the $T$-algebras $A$ for which $jA$ is a free $\Tp$-algebra.
\end{Theorem}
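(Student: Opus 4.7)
The plan is to assemble the theorem from the three prior results just mentioned, after first checking that their hypotheses hold in the present setting. The ambient $2$-category $\f C = [\f J, \Cat]$ has enough discretes (as established in the discussion of presheaf $2$-categories in Section~\ref{sec:pie}) with $\Cp = [\f J_0, \Set]$, and it admits (objective, fully faithful) factorisations computed pointwise. Since $T$ is assumed to preserve objectives, Proposition~\ref{prop:whenpresbij} tells us that these factorisations lift to $\TAlgs$. The category $[\f J_0, \Set]$, being a presheaf category, has coreflexive equalisers, and $\Tp$ preserves them by assumption. Thus Theorem~\ref{thm:charthm}, Corollary~\ref{cor:2in3} and Proposition~\ref{thm:1in2} are all applicable.

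Next, I would apply Proposition~\ref{thm:1in2} to obtain the equivalence between the closures under pie colimits of the frees and of the frees on discretes, and the classes of objective quotients of frees and of frees on discretes; this shows that items (1) and (2) (in both their ``on discretes'' and unrestricted forms) coincide. For the equality of (2) and (3), the inclusion (2) $\subseteq$ (3) is the content of Proposition~\ref{thm:classesofpie}, while the reverse inclusion (3) $\subseteq$ (2) is Corollary~\ref{cor:2in3}, which guarantees that every pie algebra is in fact an objective quotient of a free on a discrete. Finally, the characterisation of these coincident classes as those algebras $A$ with $jA$ free is immediate from Theorem~\ref{thm:charthm}, which identifies the pie algebras with exactly this class.

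Since each inclusion appeals to a result already proved, there is no genuine obstacle to overcome: the argument is purely one of bookkeeping. The only point requiring any care is that Proposition~\ref{thm:1in2} is stated with slightly different hypotheses than Theorem~\ref{thm:charthm} (it needs $T$ objective-preserving but not that $\Tp$ preserves coreflexive equalisers), and one should confirm that both sets of hypotheses are indeed subsumed by those of the present theorem---which they are, as explained in the first paragraph.
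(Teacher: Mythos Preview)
Your proposal is correct and follows essentially the same route as the paper, which simply states that the theorem is obtained by taking together Theorem~\ref{thm:charthm}, Corollary~\ref{cor:2in3} and Proposition~\ref{thm:1in2}. Your version just makes explicit the verification of hypotheses and the logical assembly (including the appeal to Proposition~\ref{thm:classesofpie} for the direction $(2)\subseteq(3)$), which is precisely the bookkeeping the paper leaves implicit.
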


Before continuing, let us make a few observations concerning the proof of Proposition~\ref{thm:1in2}.  In that proof we only made use of the fact that the base 2-category was $[\f J,\Cat]$ in two places: in determining that the objective $\lambda_{W}$ is cofaithful, and in determining that $Uq_{1}$ is full. Regarding the first of these, objectives are in fact cofaithful in any sufficiently complete $2$-category; as for the second, we only exploited the fullness of $Uq_{1}$ in later asserting it to be the coequifier of its equikernel, and this assertion makes sense in any $2$-category.  It is possible then to at least state the argument of this proof over other bases, and it would be valid so long as the base $2$-category in question had enough exactness properties (and discretes).  Since there is no concerted account of $2$-categorical exactness properties in print, we have chosen to work in the more concrete setting of a presheaf $2$-category. 

There is however another set of conditions, more directly related to exactness notions, under which the pie colimits of frees and the objective quotients of frees can be shown to coincide. These were investigated in the first author's \cite{Bourke2010Codescent}; they allow \f{C} to be more general than a presheaf 2-category but impose stronger conditions on $T$ than its preserving objectives.  Since these conditions are more difficult to verify in our examples, we have found no cause to give the arguments in any detail here; but let us nonetheless break off briefly to discuss them.

Given a map of sets $f \colon A \to B$, we may obtain its (strong epi, mono) factorisation by first forming the kernel-pair of $f$, and then the coequaliser $f_1 \colon A \to C$ of that. Since $f$ also coequalises this kernel-pair, it must factor through $f_1$ as $f = f_2 \circ f_1$; now $f_1$ is regular epi, hence strong epi, and it turns out that $f_2$ is mono, so giving the required factorisation.
We have a similar limit-colimit description of the (objective, fully faithful) factorisation of a functor $f \colon A \to B$.  By taking the comma object $f|f$ and appropriate pullbacks one may form the \emph{higher kernel} of $f$, which is an internal category in \Cat as on the left below:
$$\xy
(0,0)*+{f|f|f}="c0"; (20,0)*+{f|f}="b0";(40,0)*+{A}="a0";(70,0)*+{B\rlap{ .}}="d0";(55,-10)*+{C}="e0";
{\ar@<1.5ex>^{d} "b0"; "a0"}; 
{\ar@<0ex>|{i} "a0"; "b0"}; 
{\ar@<-1.5ex>_{c} "b0"; "a0"}; 
{\ar@<1.5ex>^{p} "c0"; "b0"}; 
{\ar@<0ex>|{m} "c0"; "b0"}; 
{\ar@<-1.5ex>_{q} "c0"; "b0"};
{\ar@<0ex>_{f_{1}} "a0"; "e0"};
{\ar@<0ex>_{f_{2}} "e0"; "d0"};
{\ar@<0ex>^{f} "a0"; "d0"};
\endxy$$

A \emph{codescent cocone} under this higher kernel is given by a $1$-cell $k \colon A \to X$ together with a $2$-cell $\gamma \colon kd \Rightarrow kc$ satisfying cocycle conditions. In $\Cat$, there is a universal such cocone $(f_1 \colon X \to C, \gamma)$, with $C$ called the \emph{codescent object} of the given data, and $f_1$ a \emph{codescent morphism}. Now $f$ itself forms part of a codescent cocone under its higher kernel, whence there is an induced factorisation $f = f_2 \circ f_1$ as on the right above. Just as each regular epi is strong epi, so every codescent morphism is objective, so in particular $f_1$ is objective; moreover, it turns out that in $\Cat$, the induced $f_2$ is always fully faithful, so that we obtain an (objective, fully faithful) factorisation of $f$ as claimed.

One can emulate this same argument in any 2-category $\f C$ which admits higher kernels and codescent objects of higher kernels, so long as when one factorises $f = f_2 \circ f_1$ in $\f C$ as above,  the arrow $f_2$ is known always to be fully faithful; this is an exactness property of the $2$-category $\f C$, analogous to regularity in the one-dimensional setting. In such a $\f C$ we have (codescent, fully faithful) factorisations, and it follows that the objectives and codescent morphisms in $\f C$ coincide. Now given a 2-monad $T$ on such a $\f C$ \emph{preserving codescent objects of higher kernels} the factorisation described above lifts, so that objectives and codescent morphisms in \TAlgs again coincide; and under these assumptions, together with our standing ones, any objective quotient of a free $T$-algebra admits a canonical presentation as a codescent object, which exhibits it as belonging to the closure of the frees under pie colimits. For the details of this argument, see~\cite[Chapter 9]{Bourke2010Codescent}.

We conclude this section with an example showing, as promised, that the classes of algebras in Proposition~\ref{thm:classesofpie} need not always coincide.
\begin{Proposition}\label{prop:counterexample}
Not every pie algebra need lie in the closure of the frees under pie colimits.
\end{Proposition}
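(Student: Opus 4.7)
To prove the proposition we would exhibit an explicit 2-monad $T$ together with a pie $T$-algebra that cannot be obtained from free $T$-algebras by coproducts, coinserters and coequifiers. By Theorem~\ref{thm:classescoincide}, such an example must sit outside the scope of that result; the cleanest way to achieve this is to arrange that the induced ordinary monad $\Tp$ on $\Cp$ fails to preserve coreflexive equalisers. When this happens, the adjunction $\fp \dashv \up$ need no longer be comonadic, so there may exist $\fp \up$-coalgebras in $\TpAlg$ that are not free $\Tp$-algebras.

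The plan then splits into three steps. First, select a monad $\Tp$ on a suitable category $\Cp$, together with an $\fp \up$-coalgebra $(X,x)$ whose underlying $\Tp$-algebra $X$ is not free; this is the heart of the construction and requires care, since many naturally occurring monads $\Tp$ do preserve coreflexive equalisers. Second, realise $\Tp$ as arising from a 2-monad $T$ on a presheaf 2-category $[\f J, \Cat]$ in the manner of diagram~\eqref{eq:situation}, so that the hypotheses of Proposition~\ref{prop:pieproj} and Proposition~\ref{prop:objproj} are available. Third, construct a pie $T$-algebra $A$ lifting $(X,x)$, meaning one for which $jA = (X,x)$ as an $\fp \up$-coalgebra. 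Then by Proposition~\ref{prop:pieproj} the existence of the coalgebra structure on $jA$ makes $A$ pie; by Proposition~\ref{prop:objproj} the failure of $X$ to be a free $\Tp$-algebra prevents $A$ from being an objective quotient of a free; and so by Proposition~\ref{thm:classesofpie}, $A$ cannot lie in the closure of the frees under pie colimits.

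The principal obstacle is the first step. A monad whose functor part is a coproduct of representables, as in most of the examples of Section~\ref{sec:exs}, preserves all connected limits and so will not serve; one must therefore work with a monad of more genuinely equational character at the level of objects, whose underlying endofunctor fails to preserve some coreflexive equaliser. Once the right $\Tp$ and non-free coalgebra $(X,x)$ have been identified, the passage to a corresponding 2-monad $T$ on a presheaf 2-category and the lifting of $(X,x)$ to a strict $Q$-coalgebra structure on some $T$-algebra should follow by the standard manoeuvres developed in Section~\ref{sec:pie}; the real content of the argument is the choice of $\Tp$ witnessing the failure of comonadicity.
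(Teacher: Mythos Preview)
Your proposal is not a proof but a proof \emph{plan}: you correctly identify the logical architecture---use Proposition~\ref{prop:pieproj} to certify an algebra as pie via an $\fp\up$-coalgebra structure on $jA$, use Proposition~\ref{prop:objproj} to see that $A$ is not an objective quotient of a free when $jA$ is not a free $\Tp$-algebra, and conclude via Proposition~\ref{thm:classesofpie}---but you stop short of producing the witness. Since the proposition asserts the existence of a counterexample, the argument is incomplete until one is actually written down.

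The paper fills this gap with a surprisingly small example, and your expectation that the required $\Tp$ must have ``more genuinely equational character'' is the one place where you misdirect yourself. The $2$-monad $T$ on $\Cat$ is given by $T(0)=0$ and $TC=C+1$ for $C$ non-empty; its algebras are the empty category together with all pointed categories. The induced $\Tp$ on $\Set$ is the same formula, which is \emph{almost} the pointed-set monad, differing only at $\emptyset$. This single exception is already enough to break preservation of coreflexive equalisers (take $f,g\colon 1\rightrightarrows 2$ distinct with the common retraction $2\to 1$; the equaliser is $\emptyset$ but its image under $\Tp$ is not the equaliser of $\Tp f,\Tp g$). The pie algebra in question is the terminal category $1$ with its unique point: $j(1)$ is the one-element pointed set, which is initial among non-empty $\Tp$-algebras and hence trivially an $\fp\up$-coalgebra, yet it is not free since free $\Tp$-algebras have cardinality $0$ or $\geq 2$. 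So $1$ is pie but not an objective quotient of a free. No equational phenomena are involved; the failure is entirely a matter of a case split at the empty object.
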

\begin{proof}
Consider the 2-monad $T$ on \Cat whose action on objects is given by $T(0)=0$ and  $TC = C + 1$ whenever $C$ is non-empty. An algebra is either a pointed category, or the empty category, and an algebra map is either a functor preserving the point, or the unique functor out of the empty category. Clearly $T$ preserves objectives, which are just the bijections on objects; thus the objectives in \TAlgs are exactly the bijective on objects algebra maps.  By Proposition~\ref{thm:classesofpie} each $T$-algebra $A$ lying in the closure of the frees under pie colimits admits an objective morphism $FC \to A$ from some free $T$-algebra; thus the sets of objects of $A$ and $FC$ have the same cardinality.  By definition of $T$ we know that the underlying category of $FC$ has either no objects (if $C$ is the empty category) or at least two; in particular the underlying category of such an $A$ cannot have a single object.

Now the terminal category $1$, which does have a single object, admits a unique $T$-algebra structure;  so we will be done if we can show that this $T$-algebra is pie. By Proposition~\ref{prop:pieproj}, this will be so if and only if $j(1)$, the $1$-element set with its unique point, admits an $F_{d}U_{d}$-coalgebra structure for the induced monad $T_{d}$ on $\Set$. Arguing as before, the algebras for $T_d$ are either pointed sets or the empty set; and amongst these, $j(1)$ is initial with respect to all but the empty one.  As $F_{d}U_{d}j(1)$ is non-empty, having cardinality $2$, there exists a unique $T_{d}$-algebra morphism $j(1) \to F_{d}U_{d}j(1)$.  To check that this equips $j(1)$ with a coalgebra structure involves checking the commutativity of diagrams with codomains $j(1)$ and $(F_{d}U_{d})^{2}j(1)$, both of which are non-empty algebras; since $j(1)$ is initial with respect to such algebras both diagrams necessarily commute, so that $j(1)$ admits $F_{d}U_{d}$-coalgebra structure; whence $1$ with its point is a pie algebra.
\end{proof}

\section{Strongly finitary $2$-monads}\label{sec:sf2monad}

In this final section, we give an extended application of the results developed in the rest of the paper. We will use them to characterise, amongst the $2$-monads on $\Cat$ of a certain class, those whose algebras can be presented as categories equipped with basic operations and basic natural transformations between derived operations, satisfying equations between derived natural transformations but with no equations being imposed between derived operations themselves. Let us agree to call such a $2$-monad \emph{pie-presentable}; a precise definition will be given shortly. 

At various places in the literature can be found observations to the effect that each pie-presentable $2$-monad is  a flexible $2$-monad in the sense of~\cite{Kelly1974Doctrinal}; see, for example, \cite[Proposition 4.3]{Kelly1974Coherence}, \cite[Remark 6.3]{Kelly2004Monoidal} or~\cite[Section 7.5]{Lack2007Homotopy-theoretic}. Under suitable cardinality constraints, the notion of flexible $2$-monad in question can, as in the introduction, be seen as a particular case of the general one: the $2$-category of monads with rank $\kappa$ on $\Cat$ is $2$-monadic over the corresponding $2$-category of endofunctors, and the flexible algebras for the induced $2$-monad are precisely the flexible $2$-monads. 

In this situation, we have also the corresponding pie algebras, hence a notion of pie $2$-monad. It turns out that every pie-presentable $2$-monad is not just flexible but also pie, but that this still does not characterise the pie-presentables amongst all $2$-monads. For example, consider the $2$-monad on $\Cat$ for which an algebra is a category $C$ equipped with a strictly commutative binary operation $\theta \colon C \times C \to C$. This $2$-monad $T$ is free on the endofunctor $\mathrm{Sym}^2$ of $\Cat$ 
which sends a category $C$ to the coequaliser of the identity and symmetry maps $C \times C \to C \times C$; thus $T$ is a pie $2$-monad by Proposition~\ref{prop:pie}, but it is not pie-presentable by virtue of the equation between operations $\theta(\thg,?) = \theta(?, \thg)$ which must be imposed. In characterising the pie-presentable $2$-monads we must therefore make one further refinement. We consider $2$-monads of our chosen class as $2$-monadic over a $2$-category of \emph{signatures}, and call the pie algebras for the induced $2$-monad the \emph{strongly pie} $2$-monads. Our result now states that, amongst the class of $2$-monads we will consider, a $2$-monad is pie-presentable if and only if it is strongly pie.

We will prove this result using the tools developed in the previous section; and in order for these to be applicable, we must ensure that when we view our class of $2$-monads as $2$-monadic over signatures,  the induced $2$-monad preserves objectives. This would \emph{not} be the case if, for example,  we were to consider all finitary $2$-monads on $\Cat$; and for this reason, we shall instead be concerned with the \emph{strongly finitary} $2$-monads of~\cite{Kelly1993Finite-product-preserving}, which are roughly speaking those whose algebras may be defined only with reference to functors of the form $\f C^n \to \f C$ for various natural numbers $n$, and to natural transformations between such functors. Whilst this rules out such $2$-monads as that for categories with finite limits---which requires, amongst other things, an operation of the form $\f C^{\cdot \rightrightarrows \cdot} \to \f C$ to express the taking of equalisers---we still retain an acceptably large class of examples including the $2$-monads for monoidal categories, categories with finite products, distributive categories, and so on. The characterisation result we will prove, then, is that a strongly finitary $2$-monad on $\Cat$ is pie-presentable if and only if it is strongly pie. Of course, there is a corresponding version of this result where ``finitary'' has been replaced by ``of rank $\kappa$'' for some regular cardinal $\kappa$; we leave its formulation to the reader.

We now define precisely the notions appearing in the statement of our result. Let $\mathbb F$ denote the full sub-$2$-category of $\Cat$ spanned by the categories $0, 1, 2, \dots$ discrete on the corresponding number of elements. The inclusion $\mathbb F \to \Cat$ induces by left Kan extension a $2$-fully faithful functor $[\mathbb F, \Cat] \to \End(\Cat)$, left adjoint to restriction; and as in~\cite{Kelly1993Finite-product-preserving}, an endofunctor of $\Cat$ is called \emph{strongly finitary} when it is in the essential image of this $2$-functor. The $2$-category $\Endsf(\Cat)$ of strongly finitary endofunctors is thus $2$-equivalent to $[\mathbb F, \Cat]$, and so locally finitely presentable. It was shown in~\cite{Kelly1993Finite-product-preserving} that the strongly finitary endofunctors are closed under composition so that the inclusion $2$-functor $\Endsf(\Cat) \to \End(\Cat)$ is strict monoidal. It is also a left adjoint, since $[\mathbb F, \Cat] \to \End(\Cat)$ is so, and thus by ~\cite{Kelly1974Doctrinal} is the left adjoint part of a monoidal adjunction: as such, it lifts to a left adjoint $\Mndsf(\Cat) \to \Mnd(\Cat)$ between the corresponding 2-categories of monoids. On the right is the $2$-category of $2$-monads on $\Cat$; on the left is the 2-category of \emph{strongly finitary 2-monads}, which are equally just the 2-monads whose underlying endofunctor is strongly finitary.  There is a forgetful $2$-functor $W \colon \Mndsf(\Cat) \to \Endsf(\Cat)$ which has a left adjoint and is finitarily $2$-monadic; it follows that $\Mndsf(\Cat)$, like $\Endsf(\Cat)$, is locally finitely presentable, and in particular, complete and cocomplete. 

Let us write $\mathbb N$ for the discrete sub-$2$-category of $\Cat$ spanned by the categories $0, 1, 2, \dots$; by a \emph{signature}, we mean an object of the presheaf $2$-category $[\mathbb N, \Cat]$. There is a forgetful $2$-functor $V \colon \Endsf(\Cat) \to [\mathbb N, \Cat]$, given by restriction along the inclusion $\mathbb N \hookrightarrow \Cat$, and this too has a left adjoint and is finitarily $2$-monadic. So we have a pair of finitarily $2$-monadic adjunctions as on the left in
\begin{equation*}
\cd{
      [\mathbb N, \Cat] \ar@<4.5pt>[r]^-{G} \ar@{}[r]|-{\bot}   &
      \ar@<4.5pt>[l]^-{V} \Endsf(\Cat) \ar@<4.5pt>[r]^-{H} \ar@{}[r]|-{\bot} &
      \ar@<4.5pt>[l]^-{W} \Mndsf(\Cat) & 
      [\mathbb N, \Cat] \ar@<4.5pt>[r]^-{K} \ar@{}[r]|-{\bot}   &
      \ar@<4.5pt>[l]^-{Z} \Mndsf(\Cat)\rlap{ .}
}
\end{equation*}
Now taking $Z = VW$ and $K = HG$ we obtain a further finitary adjunction as on the right; this is not \emph{a priori} $2$-monadic, but turns out to be so by a direct application of~\cite[Theorem~2]{Lack1999On-the-monadicity}. Consequently, if presented with a strongly finitary $2$-monad on $\Cat$, we may regard it either as an $WH$-algebra or  a $ZK$-algebra; and in either guise may ask whether, as an algebra, it is semiflexible, flexible, or pie. 

The counit of the adjunction $G \dashv V$ yields a morphism of $2$-comonads $KZ = HGVW \to HW$. From  this fact, together with Lemma~\ref{lem:rhoobj}, we deduce the existence of a morphism of $2$-comonads from the $Q$ associated with $ZK$ to that associated with $WH$; so that if a strongly finitary $2$-monad is semiflexible, flexible or pie as a $ZK$-algebra, then it is correspondingly so as a $HW$-algebra. This justifies our calling a $2$-monad \emph{strongly} semiflexible, flexible or pie in the former case, with the corresponding unqualified name serving in the latter one.

We now describe what is meant in saying that a strongly finitary $2$-monad is pie-presentable.  Such 2-monads have been defined only vaguely so far, and by reference to their algebras, which are to be categories equipped with various kinds of structure: operations and transformations, both basic and derived and with equations imposed between derived transformations.  To make precise our notion we will define these latter terms; showing that the strongly finitary 2-monads whose algebras are categories so equipped are exactly those admitting a specific colimit presentation in terms of free monads. The possibility of presenting monads in this way was first discussed in detail in~\cite{Kelly1993Adjunctions}; the $2$-monad case was considered in~\cite[Section 6.4]{Lack2007Homotopy-theoretic} and~\cite[Section 5]{Lack2010A-2-categories}. 

To draw the correspondence between colimits of free 2-monads and their algebras one uses the endomorphism 2-monad $\left<C, C\right> \in \Mnd(\Cat)$ of a category $C$, which has value $[C^{D},D]$ at a category $D$.  The key property is that 2-monad maps $T \to \left<C, C\right>$ correspond with $T$-algebra structures on $C$; it follows that one can understand the algebras for a colimit of 2-monads in terms of the algebras for its constituent parts.  
As the inclusion $\Mndsf(\Cat) \to \Mnd(\Cat)$ has a right adjoint the same holds for colimits of strongly finitary 2-monads, except we now use the strongly finitary coreflection $\left<C, C\right>_{\textnormal{sf}}$ of $\left<C, C\right>$, which in particular has value $[C^{n},C]$ for $n \in \mathbb N$.

Given a discrete signature $\Sigma_1$---that is, a discrete object of $[\mathbb N, \Cat]$---we say that a category $C$ has \emph{basic $\Sigma_{1}$-operations} if it is equipped with a functor $C^n \to C$ for each object of $\Sigma_{1}(n)$.  To so equip $C$ is to give a signature morphism $ \Sigma_{1} \to Z\left<C, C\right>_{\textnormal{sf}}$ and so by adjointness a monad morphism $c \colon K\Sigma_{1} \to \left<C, C\right>_{\textnormal{sf}}$, equally amounting to a $K\Sigma_{1}$-algebra structure on $C$.

In this situation, we have underlying the monad morphism $c$ a signature morphism $Z c \colon ZK\Sigma_1 \to Z\left<C, C\right>_{\textnormal{sf}}$ which equips $C$ with basic $ZK\Sigma_1$-operations. Precomposing with the (monic) unit map $\eta \colon \Sigma_1 \to ZK\Sigma_1$ we re-find amongst these the specified basic $\Sigma_1$-operations on $C$; but also further operations, obtained by substituting and reindexing these basic ones, that are necessarily present because $ZK\Sigma_1$ underlies a $2$-monad. Let us agree to call these \emph{derived $\Sigma_1$-operations}
on $C$: thus for each $t \in K\Sigma_1(n)$ we have the derived $\Sigma_1$-operation $\llbracket t \rrbracket \colon C^n \to C$, the value of $t$ under $c_n \colon K\Sigma_1(n) \to \left<C,C\right>_\mathrm{sf}(n) = [C^n, C]$.

Now suppose we are given another discrete signature $\Sigma_2$ and a pair of signature morphisms $s, t \colon \Sigma_2 \rightrightarrows ZK\Sigma_1$. A category  $C$ with basic $\Sigma_1$-operations will be said to have \emph{basic $\Sigma_2$-transformations} if it comes equipped with a natural transformation $\llbracket s(x) \rrbracket \Rightarrow \llbracket t(x) \rrbracket \colon C^n \to C$ between derived $\Sigma_1$-operations for each $x \in \Sigma_2(n)$. To equip a category $C$ with basic $\Sigma_1$-operations and basic $\Sigma_2$-transformations is to give a $2$-cell of $[\mathbb N, \Cat]$ as on the left in:
\begin{equation*}
\cd[@!C@C-3em@R-1.5em]{
  & ZK\Sigma_1 \ar[dr]^-{ Zc} \dtwocell{dd}{} \\
  \Sigma_2 \ar[ur]^s \ar[dr]_t & & **{!/l 1em/}{Z\left<C, C\right>_\mathrm{sf}} \\
  & ZK\Sigma_1 \ar[ur]_-{ Zc}
} \ \ 
\cd[@!C@C-2em@R-1.5em]{
  & K\Sigma_1 \ar[dr]^-{c} \dtwocell{dd}{} \\
  K\Sigma_2 \ar[ur]^{\overline s} \ar[dr]_{\overline t} & & \left<C, C\right>_\mathrm{sf} \\
  & K\Sigma_1 \ar[ur]_-{c}
} \ \ 
\cd[@!C@C-1.2em@R-1.5em]{
  & K\Sigma_1 \ar[dr]^{q} \dtwocell{dd}{\gamma} \\
  K\Sigma_2 \ar[ur]^{\overline s} \ar[dr]_{\overline t} & & R\rlap{ ;} \\
  & K\Sigma_1 \ar[ur]_{q}
}
\end{equation*}
or equally, by adjointness, a $2$-cell in $\Mndsf(\Cat)$ as in the middle; or equally, a morphism $d \colon R \to \left<C, C\right>_\mathrm{sf}$ in $\Mndsf(\Cat)$ out of the coinserter  of $\overline s$ and $\overline t$ as on the right; or equally, an   $R$-algebra structure on $C$.

Now the coinserter map $q \colon K\Sigma_1 \to R$, like any coinserter map, is objective; and we will see below that the 2-monad $ZK$ preserves objectives, so that by Proposition~\ref{prop:whenpresbij}, $Zq$ is also objective, which is to say, pointwise bijective on objects. Thus objects of $R(n)$ coincide with objects of $K\Sigma_1(n)$, that is, with derived $\Sigma_1$-operations, a fact which would no longer be true in the world of finitary 2-monads on \Cat. On the other hand, $R(n)$ will almost certainly have different \emph{morphisms} to $K\Sigma_1(n)$, and these will be the concern of our next  definition.

For a category $C$ with basic $\Sigma_{1}$-operations and $\Sigma_2$-transformations, amounting to an $R$-algebra structure as above, we define a \emph{derived $\Sigma_2$-transformation} $\llbracket \alpha \rrbracket \colon \llbracket s \rrbracket \Rightarrow \llbracket t \rrbracket \colon C^n \to C$ between derived $\Sigma_1$-operations to be the image under the monad map $d \colon R \to \left<C, C\right>_\mathrm{sf}$ of some morphism $\alpha \colon s \to t$ of $R(n)$. 
By precomposing with the adjoint transpose of the coinserter $2$-cell $\gamma$, we re-find amongst such derived transformations the basic $\Sigma_2$-transformations with which $C$ was equipped; but also others, obtained by substituting and composing together the basic ones, that are necessarily present because $R$ is a $2$-monad. 

Finally, let us suppose given a third discrete signature $\Sigma_3$ and a parallel pair of $2$-cells $\alpha$, $\beta$ as on the left of:
\begin{equation*}
\cd[@C+1em]{
  \Sigma_3 \ar@/^12pt/[r]^{h} \ar@/_12pt/[r]_{k} \dtwocell[0.32]{r}{\alpha} \dtwocell[0.6]{r}{\beta} &
  ZR
} \qquad
\cd[@C+1em]{
  K\Sigma_3 \ar@/^12pt/[r]^{\overline h} \ar@/_12pt/[r]_{\overline k} \dtwocell[0.32]{r}{\overline \alpha} \dtwocell[0.6]{r}{\overline \beta} &
  R
}
 \qquad
\cd[@C+1em]{
  K\Sigma_3 \ar@/^12pt/[r]^{\overline h} \ar@/_12pt/[r]_{\overline k} \dtwocell[0.32]{r}{\overline \alpha} \dtwocell[0.6]{r}{\overline \beta} &
  R \ar[r]^-{r} & T\rlap{ .}
}
\end{equation*}
A category $C$ with basic $\Sigma_1$-operations and $\Sigma_2$-transformations is said to \emph{satisfy $\Sigma_3$-equations} if $\llbracket \alpha_x \rrbracket = \llbracket \beta_x \rrbracket\colon \llbracket h(x) \rrbracket \Rightarrow \llbracket k(x) \rrbracket$ for each $x \in \Sigma_3(n)$. Forming the adjoint transposes in $\Mndsf(\Cat)$ of $\alpha$ and $\beta$, as in the centre above, this is equally well to ask that the monad map $d \colon R \to \left<C, C\right>_\mathrm{sf}$ encoding $C$'s $R$-algebra
structure should satisfy $d \circ \overline \alpha = d \circ \overline \beta$; or equally, that $C$ should bear algebra structure for $T$, the coequifier in $\Mndsf(\Cat)$ of $\overline \alpha$ and $\overline \beta$ as on the right above.

Thus basic operations on a category $C$ are encoded by its being an algebra for the free monad $K\Sigma_1$ on a discrete signature $\Sigma_{1}$, basic transformations between derived operations by its being an algebra for the coinserter $R$ of a pair of maps $K\Sigma_{2} \rightrightarrows K\Sigma_{1}$ with $\Sigma_{2}$ discrete, and equations between derived transformations by its being an algebra for the coequifier $T$ of a pair of 2-cells between maps $K\Sigma_{3} \rightrightarrows R$ with $\Sigma_{3}$ also discrete.  We define a strongly finitary 2-monad $T$ to be \emph{pie-presentable} when it admits such a colimit presentation.

With this in place, we are in a position to state our main result:
\begin{Theorem}\label{thm:piepres}
A strongly finitary $2$-monad on $\Cat$ is pie-presentable if and only if it is strongly pie.
\end{Theorem}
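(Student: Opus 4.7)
The plan is to obtain Theorem~\ref{thm:piepres} as a concrete instance of Theorem~\ref{thm:classescoincide}, applied to the $2$-monad $ZK$ on the presheaf $2$-category $[\mathbb N, \Cat]$, whose $\TAlgs$ is $\Mndsf(\Cat)$ and whose frees on discretes are precisely the $K\Sigma_1$ for discrete signatures $\Sigma_1$.  To set up the application, I must verify that $ZK$ preserves objectives and that $(ZK)_d$ on $[\mathbb N, \Set]$ preserves coreflexive equalisers.  Both fall out from the standard description of the free strongly finitary $2$-monad on a signature: objectives in $[\mathbb N, \Cat]$ are the pointwise bijective-on-objects morphisms, and the object set of $ZK\Sigma(n)$ is the collection of formal $n$-ary terms built from the operation symbols in $\Sigma$---a construction which depends only on the object data of $\Sigma$, and which at the discrete level exhibits $(ZK)_d$ as pointwise a coproduct of representables, hence preserving all connected limits.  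With these in hand, Theorem~\ref{thm:classescoincide} identifies the strongly pie $2$-monads with the objective quotients in $\Mndsf(\Cat)$ of the frees on discrete signatures.

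The easy direction is pie-presentable $\Rightarrow$ strongly pie: any pie presentation of $T$ consists of a coinserter morphism $K\Sigma_1 \to R$ followed by a coequifier morphism $R \to T$, both of which are objective, so the composite $K\Sigma_1 \to T$ is objective, exhibiting $T$ as an objective quotient of a free on a discrete.

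For the converse, I would unpack and then tighten the three-stage construction of Proposition~\ref{thm:1in2}.  Given an objective $q \colon K\Sigma_1 \to T$ with $\Sigma_1$ discrete, that argument constructs $T$ by first forming a coinserter $K\Sigma_2 \rightrightarrows K\Sigma_1$ with value $R$, then a coequifier of $2$-cells between maps $K\Sigma_3 \rightrightarrows R$ with value $X$, and then a further coequifier of $2$-cells between maps $K\Sigma_4 \rightrightarrows X$ with value $T$, each $\Sigma_i$ being discrete by construction.  To recover the pie-presentable form, which calls for a single coinserter followed by a single coequifier, I would fuse the two coequifier stages.  The first coequifier map $p \colon R \to X$ is at each arity a coequifier in $\Cat$, hence pointwise bijective on objects and pointwise full; and since $\Sigma_4$ is discrete, the maps $K\Sigma_4 \rightrightarrows X$ and the $2$-cells between them correspond via the adjunction $K \dashv Z$ to families of data over $\Sigma_4$ that lift---uniquely on objects, surjectively on $2$-cells---across $Zp$ to give a parallel pair of $2$-cells between maps $K\Sigma_4 \rightrightarrows R$.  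Combining these with the stage-two $2$-cells via the universal property of the coproduct $\Sigma_3 + \Sigma_4$ then yields a single parallel pair of $2$-cells between maps $K(\Sigma_3 + \Sigma_4) \rightrightarrows R$ whose coequifier is $T$, as required for pie-presentability.  The principal obstacle is justifying this fusion---one must check that the combined coequifier genuinely equals $T$, and not some smaller or larger quotient---but once the lifting has been set up correctly this is routine universal-property bookkeeping, and the result then drops out of the general machinery developed in Sections~\ref{sec:pie} and~\ref{sec:closure}.
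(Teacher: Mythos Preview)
Your overall framework is sound, and the easy direction is correctly argued. But for the hard direction you take a longer route than the paper and introduce an unjustified step in doing so.

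The paper's argument exploits a stronger property of $ZK$ than mere preservation of objectives: because each $\hat\alpha$ appearing in the explicit formula~\eqref{eq:zkform} is projectively discrete, the hom-functors $[\mathbb N, \Cat](\hat\alpha, \thg)$---and hence $ZK$ itself---preserve not only pointwise bijective-on-objects maps but also maps which are pointwise bijective on objects \emph{and full}. Consequently, after the coinserter stage produces $r \colon R \to T$ with $Zr$ pointwise bijective on objects and full, one already has $ZKZr$ pointwise bijective on objects and full as well; and this is exactly the hypothesis required to invoke the \emph{final} stage of the proof of Proposition~\ref{thm:1in2} directly. Thus only one coequifier stage is needed, not two, and no fusion is required.

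Your approach instead runs all three stages of Proposition~\ref{thm:1in2} and then attempts to fuse the two coequifier stages. The fusion rests on your claim that the first coequifier map $p \colon R \to X$ is ``at each arity a coequifier in $\Cat$, hence pointwise bijective on objects and pointwise full''. This is not justified: coequifiers in $\Mndsf(\Cat)$ are not created by the forgetful $2$-functor $Z$ to $[\mathbb N, \Cat]$, so $Zp$ need not be a pointwise coequifier, and in particular you have no warrant to conclude it is pointwise full. Without fullness of $Zp$, the $2$-cells of the second coequifier stage cannot be lifted back across $p$ to $R$, and the fusion argument collapses. The clean fix is precisely the observation you are missing---that $ZK$ preserves bijective-on-objects-and-full maps---which lets you bypass the middle stage entirely rather than having to merge it with the last.
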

One direction is easy: for by construction, any pie-presentable strongly finitary $2$-monad lies in the closure of the free $ZK$-algebras under pie colimits, and is therefore strongly pie by Proposition~\ref{thm:classesofpie}. For the other direction, we will apply the result of the previous section;  in preparation for which, we will now analyse the action of the $2$-monad $ZK$ further.

Given a signature $\Sigma \in [\mathbb N, \Cat]$, the free $2$-monad $K\Sigma$ on it is equally well the free $2$-monad on the endofunctor $G\Sigma = \sum_{n \in \mathbb N} \Sigma(n) \times (\thg)^n$. To describe this, consider $\textnormal{$\Sigma$-Alg}$, the $2$-category of algebras for the endofunctor $G\Sigma$; its objects are categories $C$ equipped with functors $\Sigma(n) \times C^n \to C$ for each $n \in \mathbb N$, and its $1$- and $2$-cells are the evident structure-preserving maps. There is a forgetful $2$-functor $\textnormal{$\Sigma$-Alg} \to \Cat$; this has a left $2$-adjoint, and by the argument of~\cite[Proposition 22.2]{Kelly1980A-unified}, the induced $2$-monad on $\Cat$ is precisely $K\Sigma$.

We now describe the left $2$-adjoint of $\textnormal{$\Sigma$-Alg} \to \Cat$, thus the free $\Sigma$-algebra on a category $C$. The description is essentially standard, and can be found at various levels of generality in~\cite[Section~2.2]{Kock2011Polynomial}, \cite[Appendix~D]{Leinster2004Higher} or~\cite[Theorem~24]{Gambino2004Wellfounded}, for example. First, let $\Omega$ be the set inductively defined by the following  clauses:
\begin{itemize}
\item $\star \in \Omega$;
\item Whenever $n \in \mathbb N$ and $\alpha_1, \dots, \alpha_n \in \Omega$, then also $(\alpha_1, \dots, \alpha_n) \in \Omega$.
\end{itemize}
Next, we recursively associate to each element $\alpha \in \Omega$ a natural number $\abs \alpha$ and an object $\hat \alpha \in [\mathbb N, \Cat]$, as follows:
\begin{align*}
    \abs{\alpha} & = \begin{cases} 1 & \text{if $\alpha = \star$;} \\
    \abs{\alpha_1} + \dots + \abs{\alpha_n} & \text{if $\alpha = (\alpha_1, \dots, \alpha_n)$;}
    \end{cases}\\
\text{and} \qquad
    \hat{\alpha} & = \begin{cases} 0 & \text{if $\alpha = \star$;} \\
    \hat{\alpha}_1 + \dots + \hat{\alpha}_n + y_n & \text{if $\alpha = (\alpha_1, \dots, \alpha_n)$.}
\end{cases}
 \end{align*}
Here, $y_n \in [\mathbb N, \Cat]$ is the representable at $n$. Now the free $\Sigma$-algebra on $C$ will have underlying category
\begin{equation}\label{eq:sum}
    C^\ast = \sum_{\alpha \in \Omega} [\mathbb N, \Cat](\hat \alpha, \Sigma) \times {C}^{\abs \alpha}\rlap{ ;}
\end{equation}
to give its $\Sigma$-algebra structure, we must give functors $\Sigma n \times (C^\ast)^n \to C^\ast$ for each $n$. Unfolding the definition~\eqref{eq:sum}, this is equally to give a functor
\begin{equation*}
    \Sigma n \times [\mathbb N, \Cat](\hat{\alpha_1}, \Sigma) \times \dots \times [\mathbb N, \Cat](\hat{\alpha_n}, \Sigma) \times C^{\abs{\alpha_1}} \times \dots \times C^{\abs{\alpha_n}} \to C^\ast
\end{equation*}
for every $n \in \mathbb N$ and $\alpha_1, \dots, \alpha_n \in \Omega$. On defining $\alpha = (\alpha_1, \dots, \alpha_n) \in \Omega$, we observe that the domain of this functor is isomorphic to $[\mathbb N, \Cat](\hat \alpha, \Sigma) \times C^{\abs{\alpha}}$, so that we may take it to be the coproduct injection at $\alpha$.
This makes $C^\ast$ into a $\Sigma$-algebra, which may now be shown to be the free $\Sigma$-algebra on $C$. It follows that~\eqref{eq:sum} gives the value at $C$ of the free $2$-monad $K\Sigma$ on $\Sigma$, and we conclude that:
\begin{Proposition}\label{prop:explicitdesc}
The $2$-functor $ZK$ on $[\mathbb N, \Cat]$ is given by:
\begin{equation}\label{eq:zkform}
    (ZK\Sigma)(n) = \sum_{\alpha \in \Omega} [\mathbb N, \Cat](\hat \alpha, \Sigma) \times n^{\abs{\alpha}}\rlap{ .}
\end{equation}
\end{Proposition}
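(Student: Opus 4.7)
The plan is simply to assemble the material laid out in the paragraphs preceding the statement. First, unwinding $ZK = VWK$, we see that $(ZK\Sigma)(n)$ is the value at the discrete $n$-element category of the underlying endofunctor of the free strongly finitary $2$-monad $K\Sigma$ on $\Sigma$; so the proposition is a statement about the value of this endofunctor on the objects of $\mathbb N \hookrightarrow \Cat$.

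Next, as remarked before the statement, $K\Sigma$ is equally well the free $2$-monad on the endofunctor $G\Sigma = \sum_n \Sigma(n) \times (\thg)^n$, so that by~\cite[Proposition~22.2]{Kelly1980A-unified} its underlying endofunctor is the monad of the free-forgetful adjunction $\Cat \leftrightarrows \textnormal{$\Sigma$-Alg}$. Consequently $(K\Sigma)(C)$ is the underlying category of the free $\Sigma$-algebra $C^\ast$ on $C$; and the inductive description~\eqref{eq:sum} recalled just above the proposition exhibits this as $\sum_{\alpha \in \Omega} [\mathbb N, \Cat](\hat\alpha, \Sigma) \times C^{\abs{\alpha}}$. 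Specialising to $C = n$, where the discrete $n$-element category is meant, yields~\eqref{eq:zkform}.

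The only non-trivial point is verifying that $\Omega$ together with the recursive assignment $\alpha \mapsto \hat\alpha$ really do parametrise the free $\Sigma$-algebra on $C$; that is, that the coproduct $C^\ast$ with its obvious coproduct-injection $\Sigma$-algebra structure is universal among $\Sigma$-algebras under $C$. This is standard polynomial-functor material, treated in detail in the references cited just above the statement, so I anticipate no real obstacle; the proof of the proposition itself is then essentially a bookkeeping exercise assembling these known results, and could even be omitted by writing the proposition as a corollary of the preceding discussion.
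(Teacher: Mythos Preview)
Your proposal is correct and matches the paper's approach exactly: the paper gives no separate proof of the proposition, instead stating it as an immediate conclusion of the preceding discussion (``\ldots and we conclude that:''), which is precisely what you anticipated. Your unwinding of $Z$ applied to $K\Sigma$ as evaluation of the underlying endofunctor of $K\Sigma$ at discrete $n$, together with the identification of that endofunctor via the free $\Sigma$-algebra formula~\eqref{eq:sum}, is the same argument the paper lays out in the paragraphs before the statement.
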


This proposition exhibits $ZK$ as pointwise a coproduct of hom-functors of the form $[\mathbb N, \Cat](\hat \alpha, \thg)$. Since each $\hat \alpha$ is projectively discrete in $[\mathbb N, \Cat]$, every such hom-functor will preserve objective morphisms, whence so also will $ZK$ itself. In fact, more is true: every such hom-functor, and hence also $ZK$, will preserve maps which are pointwise bijective on objects \emph{and full}. We will use this fact shortly.

Finally, we observe that the monad $(ZK)_d$ induced by $ZK$ on $[\mathbb N, \Cat]_d = [\mathbb N, \Set]$ sends an object $\Sigma$ to $\sum_{\alpha \in \Omega} [\mathbb N, \Set](\hat \alpha, \Sigma) \times n^{\abs{\alpha}}$; it is thus also pointwise a coproduct of representable functors, and so preserves connected limits, in particular coreflexive equalisers. 
So all of the hypotheses of Theorem~\ref{thm:classescoincide} are satisfied, which allows us to conclude that every pie $ZK$-algebra---that is, every strongly pie strongly finitary $2$-monad---is a pie colimit of frees on discrete signatures. However, this is not quite enough to show that every such $2$-monad is pie-presentable, as we must to complete the proof of Theorem~\ref{thm:piepres}; for that, we need to produce a colimit presentation of the specific form demanded in the definition of pie-presentability. We will do so by adapting the proof of Proposition~\ref{thm:1in2}.
\begin{proof}[Proof of Theorem~\ref{thm:piepres}]
We are to show that a strongly pie strongly finitary $2$-monad $T$ is pie-presentable. By  Theorem~\ref{thm:classescoincide}, we know that such a $T$ admits an objective morphism $f \colon K\Sigma_1 \to  T$ in $\Mndsf(\Cat)$, where $\Sigma_1$ is a discrete signature.  We now trace through the proof of Proposition~\ref{thm:1in2} to derive from this a pie-presentation of $T$. Arguing as in the first part of that proof, we form the comma object
in $[\mathbb N, \Cat]$ as on the left in:
\begin{equation*}
\cd{
Y \ar[r]^d \ar[d]_c \dtwocell{dr}{\gamma} & ZK\Sigma_1 \ar[d]^{Zf} \\
ZK\Sigma_1 \ar[r]_{Zf} & ZT
} \qquad  \qquad
\cd{
K\Sigma_2 \ar[r]^{f} \ar[d]_{g} \dtwocell{dr}{\delta} & K\Sigma_1 \ar[d]^{q} \\
K\Sigma_1 \ar[r]_{q} & R\rlap{ .}
}\end{equation*}
We set $\Sigma_2 = DOY$, the discrete coreflection of $Y$, and take $f, g \colon K\Sigma_2 \rightrightarrows K\Sigma_1$ to be the transposes of $d \circ \lambda_Y$ and $c \circ \lambda_Y$. Now on forming the coinserter of $f$ and $g$ as on the right above, the argument of Proposition~\ref{thm:1in2} yields an induced map $r \colon R \to T$ in $\Mndsf(\Cat)$ for which $Zr$ is pointwise bijective on objects and full. But now $ZKZr$ is also pointwise bijective on objects and full, since as we observed above, $ZK$ preserves such morphisms. So to complete the construction, we argue as in the final part of the proof of Proposition~\ref{thm:1in2}. We form the equi-kernel of $Zr$, as on the left of:
\begin{equation*}
\cd[@C+1em]{
  W \ar@/^12pt/[r]^{w} \ar@/_12pt/[r]_{z} \dtwocell[0.32]{r}{\phi} \dtwocell[0.6]{r}{\psi} &
  ZR \ar[r]^{Zr} & ZT
} \qquad \qquad
\cd[@C+1em]{
  K\Sigma_3 \ar@/^12pt/[r]^{h} \ar@/_12pt/[r]_{k} \dtwocell[0.35]{r}{\alpha} \dtwocell[0.62]{r}{\beta} &
  R \ar[r]^{r} & T\rlap{ .}
}
\end{equation*}
Now we set $\Sigma_3 = DOW$, and on whiskering with $\lambda_W \colon DOW \to W$ and taking transposes, obtain data $h$, $k$, $\alpha$, $\beta$ as on the right; now the argument of the last part of Proposition~\ref{thm:1in2} shows that $r$ exhibits $T$ as the coequifier of $\alpha$ and $\beta$, so that $T$ is pie-presentable as required.
 \end{proof}
Let us now relate the notion of strongly pie $2$-monad with our intuition that the pie algebras are those which are ``free at the level of objects''. If $T$ is a strongly finitary $2$-monad on $\Cat$, then we have, as before, an induced monad $T_d$ on $\Cat_d = \Set$; and it is easy to see that this $T_d$ is finitary. Let us say that a finitary monad on $\Set$ is \emph{free on a signature} if it is in the essential image of the left adjoint of the (monadic) forgetful functor $\Mndf(\Set) \to [\mathbb N, \Set]$.

\begin{Proposition}\label{prop:stronglypie}
A strongly finitary $2$-monad $T$ on $\Cat$ is strongly pie just when the induced finitary monad $T_d$ on $\Set$ is free on a signature.
\end{Proposition}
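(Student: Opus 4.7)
The plan is to view this as a direct application of Theorem~\ref{thm:classescoincide} to the 2-monad $ZK$ on the presheaf 2-category $[\mathbb N, \Cat]$, followed by a translation of the resulting condition into an assertion about finitary monads on $\Set$.

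First I would verify the hypotheses of Theorem~\ref{thm:classescoincide} for $ZK$ on $[\mathbb N, \Cat]$. Since $\mathbb N$ is small, $[\mathbb N, \Cat]$ is a presheaf 2-category; moreover, the explicit formula of Proposition~\ref{prop:explicitdesc} exhibits $ZK$ pointwise as a coproduct of hom-functors $[\mathbb N, \Cat](\hat\alpha, \thg)$ out of projectively discrete objects $\hat\alpha$, so $ZK$ preserves objectives; and the same formula applied on $\Set$ shows that $(ZK)_d\Sigma(n) = \sum_{\alpha\in\Omega}[\mathbb N, \Set](\hat\alpha, \Sigma)\times n^{|\alpha|}$ is pointwise a coproduct of representables, so preserves coreflexive equalisers. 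Thus Theorem~\ref{thm:classescoincide} applies and gives that $T$ is strongly pie iff $jT$ is a free $(ZK)_d$-algebra, where $j \colon (\textnormal{$ZK$-Alg})_0 \to (ZK)_d\textnormal{-Alg}$ is the canonical comparison.

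Next I would establish that $(ZK)_d\textnormal{-Alg}$ is isomorphic to $\Mndf(\Set)$ in such a way that $j$ corresponds to the functor $T \mapsto T_d$. The monad $M$ on $[\mathbb N, \Set]$ whose algebras are finitary monads on $\Set$ is standardly given, by the same construction that underlies Proposition~\ref{prop:explicitdesc}, by the identical formula $M\Sigma(n) = \sum_{\alpha\in\Omega}[\mathbb N, \Set](\hat\alpha, \Sigma)\times n^{|\alpha|}$; so $M = (ZK)_d$ and, since $\Mndf(\Set)$ is monadic over $[\mathbb N, \Set]$ with induced monad $M$, we obtain the desired identification compatible with the forgetful functors to signatures. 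Under this identification $j$ corresponds to $(\thg)_d$: indeed, the underlying signature of $jT$ is $OZT$, which sends $n$ to $\mathrm{ob}\, T(n) = T_d(n)$, and tracing through the definitions shows that the $(ZK)_d$-algebra structure on $OZT$ is exactly the monad multiplication of $T_d$.

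Combining these two steps closes the argument, since under the identification $(ZK)_d\textnormal{-Alg} \cong \Mndf(\Set)$ the free $(ZK)_d$-algebras are, by definition, precisely the finitary monads on $\Set$ which are free on a signature. The only real subtlety lies not in any individual verification, all of which are routine, but in carefully matching the algebra structures on the two sides of the identification so that the correspondence between $j$ and $(\thg)_d$ is transparent; once that matching is in place the proposition follows directly.
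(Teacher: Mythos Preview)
Your proposal is correct and follows essentially the same outline as the paper's proof: reduce to the characterisation ``$T$ is strongly pie iff $jT$ is a free $(ZK)_d$-algebra'' (the paper cites Theorem~\ref{thm:charthm} rather than Theorem~\ref{thm:classescoincide}, but either works and the hypotheses were already checked in the text before Theorem~\ref{thm:piepres}), and then identify $(ZK)_d\text{-Alg}$ with $\Mndf(\Set)$ so that $j$ corresponds to $(\thg)_d$.

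The one place where the paper's argument is tighter is in establishing that last identification. You assert that the monad $M$ for finitary monads on $\Set$ is given by the same formula as $(ZK)_d$, hence $M = (ZK)_d$ as monads, and then say the correspondence $j \leftrightarrow (\thg)_d$ follows by ``tracing through the definitions''. The paper instead observes that the analogue of Proposition~\ref{prop:explicitdesc} over $\Set$ yields a commuting triangle of adjunctions over $[\mathbb N,\Set]$, and then argues that in the square
\[
\cd{
\Mndsf(\Cat)_0 \ar[r]^{(\thg)_d} \ar[d] & \Mndf(\Set) \ar[d] \\
\ZKoAlg \ar[r]_j & \ZKpAlg
}
\]
every vertex carries an adjunction with $[\mathbb N,\Set]$ and every edge commutes with both the left and right adjoints; since a functor into a category of algebras commuting with both adjoints is unique, the square commutes. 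This is cleaner: it avoids having to check separately that the monad \emph{structures} (not just the underlying functors) of $M$ and $(ZK)_d$ agree, and it yields the compatibility of $j$ with $(\thg)_d$ without any explicit unwinding. Your approach would work, but the ``tracing through'' you defer is precisely the content that the paper's uniqueness argument handles for free.
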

For an earlier result in a similar spirit to this one, see \cite[Proposition 4.3]{Kelly1974Coherence}.
\begin{proof}
An analysis identical in form to the one leading to Proposition~\ref{prop:explicitdesc} yields a description of the free finitary monad on $\Set$ generated by a signature $\Sigma \in [\mathbb N, \Set]$, and from this we deduce that there is a commuting diagram of left and right adjoints as on the left in:
\begin{equation*}
\cd[@!C@R+0.5em@C-2em]{
 \Mndsf(\Cat)_{0} \ar[rr]^{(-)_{d}} \ar@<-3pt>[dr]_-{OZ}  & &
 \Mndf(\Set)\ar@<3pt>[dl]^-{} \\ &
[\mathbb N, \Set] \ar@<-3pt>[ul]_-{KD} \ar@<3pt>[ur]^-{}}
\qquad      \cd{
        \Mndsf(\Cat)_0 \ar[r]^{(\thg)_d} \ar[d]_{} & \Mndf(\Set) \ar[d]^{} \\
        \ZKoAlg \ar[r]_j  & \ZKpAlg\rlap{ .}
    }
\end{equation*}
 Consider now the  square on the right above, in which the unlabelled vertical arrows are the canonical equivalences. Each vertex admits an adjunction with $[\mathbb N, \Set]$, whilst each morphism commutes with both the left and the right adjoint parts of these adjunctions. As there is a unique functor to a category of algebras commuting with both adjoints it follows that the square commutes.  Thus to say of $T \in \Mndsf(\Cat)$ that $T_d$ is free on a signature is equally well to say that $jAT$ is a free $\Tp$-algebra; and so equally, by Theorem~\ref{thm:charthm}, that $T$ is a strongly pie $2$-monad.
\end{proof}
For our final result, we fulfil a promise made at the end of Section~\ref{sec:catexs} by giving a general characterisation of the pie algebras for any strongly pie $2$-monad.
\begin{Proposition}\label{prop:stronglypiepiealgs}
If $T$ is a strongly pie strongly finitary $2$-monad on $\Cat$, then the following classes of $T$-algebras coincide:
\begin{enumerate}
\item The closure of the frees (on discretes) under pie colimits;
\item The objective quotients of the frees (on discretes);
\item The pie algebras;
\end{enumerate}
and may be characterised as the $T$-algebras $A$ for which $jA$ is a free $\Tp$-algebra.
\end{Proposition}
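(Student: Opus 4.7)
The plan is to derive this proposition as a direct application of Theorem~\ref{thm:classescoincide} taken over $\f J = 1$, the terminal small $2$-category. Under this choice, $[\f J, \Cat]$ is isomorphic to $\Cat$, and $[\f J_0, \Set]$ is isomorphic to $\Set$, so that the monad denoted $\Tp$ in Theorem~\ref{thm:classescoincide} agrees with the induced monad on $\Set$ already denoted $\Tp$ in Proposition~\ref{prop:stronglypie}. What remains is to verify the two hypotheses demanded by Theorem~\ref{thm:classescoincide}: that $T$ preserves objective morphisms of $\Cat$, and that $\Tp$ preserves coreflexive equalisers in $\Set$.

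The second of these is the easier. Since $T$ is by assumption strongly pie, Proposition~\ref{prop:stronglypie} identifies $\Tp$ as a free finitary monad on some signature $\Sigma \in [\mathbb N, \Set]$. Running the inductive construction of Proposition~\ref{prop:explicitdesc} in the $\Set$-based setting (equally, applying $\mathrm{ob}$ to the $\Cat$-valued formula there) yields the explicit description $\Tp(X) = \sum_{\alpha \in \Omega} [\mathbb N, \Set](\hat\alpha, \Sigma) \times X^{\abs \alpha}$, exhibiting $\Tp$ pointwise as a coproduct of representable set-valued functors. Such a coproduct preserves connected limits, and in particular coreflexive equalisers.

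For the first hypothesis, we use the strong finitariness of $T$. Write $T \cong \mathrm{Lan}_i \tilde T$ for some $\tilde T \colon \mathbb F \to \Cat$, so that the coend formula gives $TC \cong \int^{n \in \mathbb F} C^n \times \tilde T n$. The objects functor $\mathrm{ob} \colon \Cat \to \Set$ is simultaneously a left adjoint to the chaotic-category functor and a right adjoint to the discrete-category functor $D$; hence it preserves both limits and colimits. Applying $\mathrm{ob}$ therefore gives $\mathrm{ob}(TC) \cong \int^{n \in \mathbb F} (\mathrm{ob}\,C)^n \times \mathrm{ob}(\tilde T n)$, which is precisely the strongly finitary set-valued endofunctor associated with $\mathrm{ob} \circ \tilde T$ evaluated at $\mathrm{ob}\,C$; this endofunctor is $\Tp$, so we obtain a natural isomorphism $\mathrm{ob}\,T \cong \Tp\,\mathrm{ob}$. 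Now if $f \colon C \to D$ is objective, meaning $\mathrm{ob}(f)$ is a bijection in $\Set$, then $\Tp(\mathrm{ob}\,f)$ is also a bijection (any functor preserves isomorphisms), whence $\mathrm{ob}(Tf)$ is a bijection and $Tf$ is objective.

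With both hypotheses verified, Theorem~\ref{thm:classescoincide} applies directly, yielding the coincidence of the three classes of $T$-algebras and the common characterisation as those $A$ with $jA$ free. The one point requiring care is the natural isomorphism $\mathrm{ob}\,T \cong \Tp\,\mathrm{ob}$: while not surprising in view of the preservation properties of $\mathrm{ob}$, it is the crucial link reducing objective-preservation of $T$ to the elementary observation that any functor preserves isomorphisms.
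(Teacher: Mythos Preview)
Your proof is correct and follows the same overall strategy as the paper: verify the two hypotheses of Theorem~\ref{thm:classescoincide} (with $\f J = 1$) and apply it directly. The treatment of coreflexive equalisers is identical in spirit to the paper's.

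The only point of difference is in how you establish that $T$ preserves objectives. The paper argues directly: writing $T \cong \int^{n \in \mathbb F} Dn \times \Cat(n,\thg)$, each functor $\Cat(n,\thg) = (\thg)^n$ preserves bijective-on-objects functors, and since objectives are closed under colimits in the arrow category, the coend $T$ does too. You instead extract from the same coend formula the natural isomorphism $\mathrm{ob}\circ T \cong \Tp \circ \mathrm{ob}$, using that $\mathrm{ob}$ preserves both limits and colimits, and then read off objective-preservation from the fact that $\Tp$ preserves bijections. Both arguments are sound; yours is slightly more elaborate but yields as a bonus the explicit commutation $\mathrm{ob}\circ T \cong \Tp\circ\mathrm{ob}$, which is a pleasant structural fact. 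The paper's route is a touch more economical since it does not need to identify the resulting $\Set$-endofunctor with $\Tp$.
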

\begin{proof}
To say that $T \colon \Cat \to \Cat$ is strongly finitary is to say that it is the left Kan extension along the inclusion $\mathbb F \hookrightarrow \Cat$ of some $D \colon \mathbb F \to \Cat$. Thus $T \cong \int^{n \in \mathbb F} Dn \times \Cat(n,\thg)$ is a colimit of functors of the form $\Cat(n,\thg)$, and since every such functor preserves objectives, so too does $T$.
Moreover, since $T$ is strongly pie, the induced monad $T_d$ on $\Set$ is free on a signature. Any such monad is a coproduct of representables, and as such preserves all connected limits, and in particular coreflexive equalisers. Thus all the hypotheses of Theorem~\ref{thm:classescoincide} are satisfied and the result follows.
\end{proof}

\bibliographystyle{acm}

\bibliography{bibdata}

\end{document}